\def\ps@pprintTitle{%
	\let\@oddhead\@empty
	\let\@evenhead\@empty
	\def\@oddfoot{}%
	\let\@evenfoot\@oddfoot}
\newcommand{\Bk}{\color{black}}
\newcommand{\tnorm}{\@ifstar\@tnorms\@tnorm}
\newcommand{\@tnorms}[1]{%
	\left|\mkern-1.5mu\left|\mkern-1.5mu\left|
	#1
	\right|\mkern-1.5mu\right|\mkern-1.5mu\right|
}
\newcommand{\@tnorm}[2][]{%
	\mathopen{#1|\mkern-1.5mu#1|\mkern-1.5mu#1|}
	#2
	\mathclose{#1|\mkern-1.5mu#1|\mkern-1.5mu#1|}
}
\newcommand{\jump}[1]{[ #1 ]}
\newcommand{\av}[1]{\{ #1 \}}
\newcommand{\bm}[1]{\boldsymbol{#1}}
\newcommand{\mesh}{\mathcal{E}_h}
\newcommand{\DG}{\mathrm{DG}}
\newtheorem{lemma}{Lemma}
\newdefinition{definition}{Definition}
\newtheorem{theorem}{Theorem}
\newtheorem{corollary}{Corollary}
\newtheorem{proposition}{Proposition}
\newdefinition{remark}{Remark}
\newproof{proof}{Proof}
\begin{document}
	\begin{frontmatter}	
		\title{Numerical analysis of a hybridized discontinuous Galerkin method for the Cahn--{H}illiard 
			problem}
		
		\author[KLAK]{Keegan L. A. Kirk\fnref{label1}}
		\ead{klk12@rice.edu}
		\fntext[label1]{\url{} KK acknowledges the support of the Natural Sciences and Engineering 
		Research Council of Canada (NSERC) via MSFSS 566831 and PDF 568008.}

		\author[RM]{Rami Masri\fnref{label2}}
		\ead{rami@simula.no}
		\fntext[label2]{\url{} RM acknowledges support and funding from the Research Council of Norway (RCN) via FRIPRO grant agreement 324239 (EMIx). }

		\author[KLAK]{Beatrice Riviere \fnref{label3}}
		\ead{riviere@rice.edu}
		\fntext[label3]{\url{} BR acknowledges support from the National Science Foundation via DMS 2111459 and DMS 1913291.}
		
		\address[KLAK]{Department of Computational and Applied Mathematics, Rice University, Houston, 
			USA} 
		\address[RM]{Department of Numerical Analysis and Scientific Computing, Simula Research 
			Laboratory, Oslo, Norway}

		\begin{abstract}
		The mixed form of the Cahn-Hilliard equations is discretized by the hybridizable discontinuous Galerkin method. For any 
chemical energy density, existence and uniqueness of the numerical solution is obtained.  The scheme is proved to
be unconditionally stable. Convergence of the method is obtained by deriving a priori error estimates that are valid for the
Ginzburg-Lindau chemical energy density and for convex domains. The paper also contains discrete functional tools, namely discrete 
Agmon and Gagliardo-Nirenberg inequalities, which are proved to be valid in the hybridizable discontinuous Galerkin spaces.  
		\end{abstract}
		
		\begin{keyword}
		\end{keyword}
		
	\end{frontmatter}
	
	
	\section{Introduction}
	
The Cahn--{H}illiard equation was originally proposed in \cite{cahn1958free} as a model for phase 
separation in binary alloys. Since then, it has become fundamental to the phase field theory for moving 
interface problems.  Some notable applications include tumor growth 
\cite{medina2022stabilized,agosti2017cahn} 
and
multi-phase flows \cite{fu2020CHNS,liu2020efficient,liu2020priori}.
In its primal form, the Cahn--Hilliard equation is a fourth order nonlinear parabolic equation; thus, its 
numerical 
approximation 
presents a significant computational challenge. The conforming finite element 
approximation of 
fourth order 
elliptic operators is difficult, as the natural functional setting for weak solutions demands $H^2$ 
regularity. 
The construction and implementation of $H^2$ conforming elements are challenging tasks, especially in 
three dimensions. Alternatives to conforming methods for the Cahn--Hilliard problem in primal form are 
non-conforming 
approximations of $H^2$ such as the Morley element \cite{elliottfrench1989cahnmorley}, or 
$\mathcal{C}^0$ interior-penalty methods \cite{feng2007fully,wells2006discontinuous,aristotelous2015adaptive}. 
The latter approach is 
particularly 
attractive for three dimensional simulations due to ease in which a basis can be constructed in higher 
dimensions.

Due to the difficulty involved with the numerical treatment of higher order derivatives, the mixed 
form of the Cahn--Hilliard equation is often preferred as it involves instead the solution of a coupled 
second order system. To our knowledge, this 
approach to the Cahn--Hilliard problem was first considered in \cite{elliottfrench1989cahn} with 
classical $\mathcal{C}^0$ elements. However, it is well known 
that such methods violate the local mass balance satisfied at the continuous level. To this end, 
fully non-conforming discontinuous Galerkin 
(DG) methods are a suitable choice. For the use of DG methods 
in the numerical solution of the mixed Cahn--Hilliard system, we refer to 
\cite{shu2007ldgcahn,shu2017ldgcahn,yan2021ldgcahn} for the LDG method and  
\cite{kay2009discontinuous,aristotelous2013mixed,liu2019numerical} for the IPDG method.

Despite their advantages, DG methods come with a 
significant increase in the number of globally coupled degrees of freedom over conforming $\mathcal{C}^0$ 
elements. To address this additional computational burden, the 
\emph{hybridizable} discontinuous Galerkin (HDG) methods were introduced in \cite{Cockburn:2009}. 
Key to the HDG methods is the introduction of additional unknowns on the mesh skeleton which 
act as Lagrange multipliers enforcing the continuity of the normal component of the
numerical flux. As a consequence, the element unknowns can be eliminated locally through static 
condensation leading to a reduction in the total number of globally coupled degrees of freedom. The 
HDG method has seen success across a wide variety of elliptic and parabolic problems, and has recently 
been applied to the Cahn--Hilliard \cite{medina2022stabilized, chen2023LHDG} and 
Cahn--Hilliard--Navier--Stokes 
systems \cite{fu2020CHNS}. 
Closely related to the HDG method is the hybrid high order (HHO) method introduced in 
\cite{DiPietro:2015}, which has been applied to the Cahn--Hilliard problem in \cite{Chave:2016, 
chave2017HHOconv}. Similar to the HDG method, the HHO method introduces additional degrees of 
freedom 
on the mesh skeleton in order to leverage static condensation to reduce the size of the global system.
The HHO method differs from the use of local reconstruction operators and face-based 
stabilizations. For more information on the ties between the HHO and HDG methods, we refer the 
reader to 
\cite{Cockburn:2016}. 

Regarding the theoretical analysis of HDG methods for the Cahn--Hilliard system, we mention that 
optimal error bounds for the hybridizable LDG method are proven in 
\cite{chen2023LHDG}. However, 
to the best of our knowledge, the theoretical analysis of the hybridized IPDG methods in 
\cite{medina2022stabilized, fu2020CHNS} is missing from the literature.
We 
remark that the additional facet unknowns in our HDG scheme precludes the use of discrete functional 
analysis tools used 
previously 
in \cite{kay2009discontinuous,aristotelous2013mixed} to analyze the IPDG method. Fortunately, 
appropriate 
analogues of these tools have been extended to the HHO setting in \cite{Chave:2016}, and using 
similar
techniques we will show that they hold also in the HDG setting.


%

%
	
	The main contributions of this paper are (i) the unconditional unique solvability 
	(Theorem~\ref{thm:solvability}) of HDG established by using the Minty--Browder and  Brouwer fixed 
	point theorems, (ii) the unconditional stability for any $\mathcal{C}^2$ potential function 
	(Theorem~\ref{thm:stability}), (iii) the $L^\infty$ stability of the order parameter 
	(Theorem~\ref{thm:linf_stability}) established for convex domains and for the Ginzburg--Landau 
	potential without any regularization, truncation, or extension, and (iv) optimal a priori error estimates 
	in the broken $H^1$ norm (Theorem~\ref{theorem:error_estimate}) for the Ginzburg--Landau 
	potential and for convex domains. 
	It is worth noting that a major challenge in the analysis is to avoid the use of any modifications or 
	assumptions on the potential function such as the ones made in 
	\cite{liu2019numerical,barrett1999finite}. 
	Following similar strategies to 
	\cite{kay2009discontinuous, DiPietro:2017}, we successfully avoid such assumptions by proving 
	discrete Agmon and Gagliardo--Nirenberg inequalities in the HDG setting, which are useful stand-alone results that
can be applied to other problems.
	
	\section{Model problem, notation and preliminaries}
	Let $\Omega \subset \mathbb{R}^d$, $d=2,3$ be a bounded, open, polygonal ($d=2$) 
	or 
	polyhedral ($d=3$) domain with outer unit normal $\boldsymbol{n}$. We consider the fourth 
	order 
	Cahn--Hilliard 
	equation, rewritten as a 
	second order system: find a pair $(c,\mu)$ satisfying  
	\begin{subequations} \label{eq:CH_strong}
		\begin{align}
			\partial_t c - \Delta \mu & = 0, && \text{ in } \Omega \times (0,T), \label{eq:CH_strong_a} \\
			\mu &= \Phi'(c) - \kappa \Delta c, && \text{ in } \Omega \times (0,T), \\
			\nabla c \cdot \boldsymbol{n} &=0, && \text{ on } \partial \Omega \times (0,T), \\
			\nabla \mu \cdot \boldsymbol{n} &=0, && \text{ on } \partial \Omega \times (0,T), 
			\label{eq:CH_strong_d} \\
			c &= c_0, && \text{ on } \Omega \times \cbr{0}. \label{eq:CH_strong_e}
		\end{align}
	\end{subequations}
	We assume that the scalar potential function $\Phi$ admits a concave-convex 
	decomposition; it suffices to assume
	$\Phi \in \mathcal{C}^2$. In other words, we can write
	\begin{equation}
		\Phi(c) = \Phi_+(c) +  \Phi_-(c),
	\end{equation}
	with $\Phi_{+}$ convex and $\Phi_{-}$ concave.
	
	\subsection{Basic results on broken Sobolev and polynomial spaces}

Let $\mathcal{E}_h$ be a conforming shape-regular mesh of $\Omega$, made of simplices $E$ with boundary $\partial E$
and diameter $h_E$. The mesh size is $h = \max_{E \in \mathcal{E}_h} h_E$.  
Let $\Gamma_h^0$ (resp $\Gamma_h^b$) be the set of 
interior (resp. boundary) faces
and let $\Gamma_h = \Gamma_h^0 \cup \Gamma_h^b$.  
Further, let $\mathcal{F}_E$ denote the set of all the faces of an element $E \in \mathcal{E}_h$. 
We assume that the family $\cbr{\mathcal{E}_h}_{h > 0}$ is
quasi-uniform.

		
		Let $k \ge 1$ be a fixed integer. We introduce a pair of broken polynomial spaces on 
		$\mathcal{E}_h$:
		\begin{align}
			S_h &= \cbr{ v \in L^2(\Omega) \, : \, \forall E \in \mathcal{E}_h, v|_{E} \in \mathbb{P}_k(E)}, \\
			M_h &= S_h \cap L_0^2(\Omega).
		\end{align}
		Moreover, the HDG method requires the following broken polynomial space defined on 
		$\Gamma_h$:
		\begin{align}
			\hat{S}_h &= \cbr{ v_h \in L^2(\Gamma_h) \, : \, \forall e \in \Gamma_h, v|_{e} \in 
				\mathbb{P}_k(e) 
			}. 
		\end{align}
		Here, $L_0^2(\Omega)$ is the zero mean value subspace of $L^2(\Omega)$ and 
		$\mathbb{P}_k(\mathcal{O})$ denotes the space 
		of 
		polynomials of degree less than or equal to $k$ defined on the open set $\mathcal{O}$.

		As the spaces $S_h$ and $M_h$ are non-conforming, we introduce
		the \emph{broken gradient operator} $\nabla_h v_h$ by the restriction
		$(\nabla_h v_h)|_{E} = \nabla (v_h|_{E})$.
		Moreover, the trace of a function $v_h \in S_h$ may be double-valued
		on interior facets. To each  interior facet $e \in \Gamma_h^0$, 
we associate a unique normal vector $\mathbf{n}_e$ and denote by $E_+$ and $E_-$ the neighboring elements of $e$ such 
that $\mathbf{n}_e$ points from $E_-$ to $E_+$. 
We introduce the \emph{jump} $\jump{\cdot}$ and  \emph{average} $\av{\cdot}$ of 
		$v_h \in S_h$ 
		across an interior facet $e \in \Gamma_h^0$ as follows: let $\jump{v_{h}} = v_{h}|_{E_+} - 
		v_{h}|_{E_-}$ 
		and $\av{v_{h}} = (v_{h}|_{E_+} + v_{h}|_{E_-})/2$. On boundary faces $e \in 
		\Gamma_h^b$, we set $\jump{v_h} = \av{v_h} =  v_h|_{E} \text{ on } 
		e$, where $E$ is the element such that
		$e\subset \partial E\cap  \partial \Omega$.
		
		We adopt the following notation for various product spaces of interest
		in this work: 
		\begin{equation*}
			\boldsymbol{S}_h = S_h \times \hat{S}_{h}, \quad \boldsymbol{M}_h = M_h \times \hat{S}_h.
		\end{equation*}
		Pairs in these product spaces will be denoted
		using boldface; for example, $\boldsymbol{s}_h = 
		(s_h,\hat{s}_h) \in \boldsymbol{S}_h$. 
		Throughout we use the notation $a \lesssim b$ to denote $a \le C b$
		where $C$ is a generic constant independent of the mesh parameters $h$  and
		$\tau$, but possibly dependent on the polynomial
		degree $k$, the spatial dimension $d$, and the domain
		$\Omega$.
		%

	
Given an 
	integer $s \ge 1$, we define the broken Sobolev space:
	\begin{equation}
		H^s(\mathcal{E}_h) = \cbr{ v \in L^2(\Omega) \, : \, \forall E \in \mathcal{E}_h, \quad v|_{E} \in 
			H^s(E)}.
	\end{equation}
	%
	%
	Define $j_{0}, j_{1}: \boldsymbol{S}_h \times \boldsymbol{S}_h \rightarrow \mathbb{R}$ as follows 
	\begin{align} 
		j_{0} (\bm{u}_h, \bm{v}_h) & =  \sum_{E \in \mathcal{E}_h}  	h_E\int_{\partial E}  (u_h - 
		\hat{u}_h)(v_h - \hat{v}_h) \dif s, \label{eq:def_j0h} \\ 
		j_{1}(\bm{u}_h, \bm{v}_h) & =  \sum_{E \in \mathcal{E}_h}  \frac{1}{h_E} \int_{\partial E} (u_h 
		-\hat{u}_h)(v_h - \hat{v}_h) \dif s. \label{eq:def_j1h}  
	\end{align}  
	We equip $\boldsymbol{S}_h$ with the following (semi-)inner-products:
	\begin{align} \label{eq:product_space_inner_product_l2}
		(\boldsymbol{u}_h,\boldsymbol{v}_h)_{0,h} &= \int_{\Omega} u_h v_h \dif x + j_{0}(\bm{u}_h, 
		\bm{v}_h) , \\
		(\boldsymbol{u}_h, \boldsymbol{v}_h)_{1,h} &= \sum_{E \in \mathcal{E}_h} \int_{E} \nabla u_h 
		\cdot \nabla v_h \dif x + j_{1}(\bm{u}_h, \bm{v}_h) , \label{eq:product_space_inner_product_h1}
	\end{align}	
	as well as their induced (semi-)norms:
	\begin{align} \label{eq:product_space_norm_l2}
		\norm{\boldsymbol{v}_h}_{0,h} &= \del[3]{\norm{
				v_h}_{L^2(\Omega)}^2 + 
			\sum_{E 
				\in \mathcal{E}_h} h_E \norm{ v_h - \hat{v}_h}_{L^2(\partial E)}^2 }^{1/2}, \quad \forall 
		\boldsymbol{v}_h \in \boldsymbol{S}_h, \\
		\norm{\boldsymbol{v}_h}_{1,h} &= \del[3]{\sum_{E \in \mathcal{E}_h} \norm{\nabla 
				v_h}_{L^2(E)}^2 + 
			\sum_{E 
				\in \mathcal{E}_h} \frac{1}{h_E} \norm{ v_h - \hat{v}_h}_{L^2(\partial E)}^2 }^{1/2}, \quad 
		\forall 
		\boldsymbol{v}_h \in \boldsymbol{S}_h. \label{eq:product_space_norm_h1}
	\end{align}
	The space $\boldsymbol{M}_h$ equipped with $(\cdot,\cdot)_{1,h}$ is an inner-product space.  We 
	note that
	$\norm{\cdot}_{1,h}$  is not a norm on $\boldsymbol{S}_h$. 
	From the definition of $j_{0}(\cdot, \cdot)$ and the fact that $h_E \leq h$ , we have that  
	\begin{equation}
		j_{0}(\bm{v},\bm{v}) \leq h^2 \sum_{E 
			\in \mathcal{E}_h} \frac{1}{h_E} \norm{ v - \hat{v}}_{L^2(\partial E)}^2 \leq h^2 \|\bm{v} 
		\|_{1,h}^2 , \quad 
		\forall 
		\boldsymbol{v} = (v,\hat{v}) \in H^1(\mathcal{E}_h)\times L^2(\Gamma_h). 
	\end{equation}
	Therefore we have for any $\boldsymbol{v} = (v,\hat{v}) \in H^1(\mathcal{E}_h)\times L^2(\Gamma_h)$
	\begin{align}
		\norm{\boldsymbol{v}}_{0,h}^2 = \norm{v}_{L^2(\Omega)}^2 + 
		j_0(\boldsymbol{v},\boldsymbol{v})
		&\leq \norm{v}_{L^2(\Omega)}^2 + h^2 \Vert \boldsymbol{v} \Vert_{1,h}^2, \\ 
		|j_0(\bm{u},\bm{v})| \leq j_0(\bm{u},\bm{u})^{1/2} j_0(\bm{v}, \bm{v})^{1/2} & \leq  h^2 \|\bm{u}\|_{1,h} \|\bm{v}\|_{1,h},\\
		|j_{0} (\bm{u}, \bm{v}) | \leq j_{0}  (\bm{u}, \bm{u})^{1/2} j_{0}  (\bm{v}, 
		\bm{v})^{1/2} &\leq  h \norm{\bm{u}}_{0,h}\norm{\bm{v}}_{1,h} .   \label{eq:bound_j0_1h} 
	\end{align}
	Moreover, on the broken Sobolev space $H^2(\mathcal{E}_h) \times  L^2 (\Gamma_h)$ we 
	introduce another norm (with $\mathbf{n}_E$ denoting the unit outward normal vector to $\partial E$):
	\begin{equation}
		\norm{\boldsymbol{v}}_{1,h,\star} = \del[3]{\norm{\boldsymbol{v}}_{1,h}^2 + \sum_{E \in 
				\mathcal{E}_h} 
			h_E \norm{\nabla v \cdot \mathbf{n}_E}_{L^2(\partial E)}^2}^{1/2}.
	\end{equation}
	It is equivalent to the norm $\Vert \cdot\Vert_{1,h}$ on $\boldsymbol{S}_h$ using trace and 
	inverse inequalities:
	\begin{equation}\label{eq:equivnorms}
		\Vert \boldsymbol{v}_h \Vert_{1,h} \leq \norm{\boldsymbol{v}_h}_{1,h,\star} \lesssim \Vert 
		\boldsymbol{v}_h \Vert_{1,h}, 
		\quad\forall \boldsymbol{v}_h\in\boldsymbol{S}_h.
	\end{equation}
	We recall the standard DG semi-norm for $H^1(\mathcal{E}_h)$:
	\begin{equation}
		\norm{v}_{\DG} = \bigg(\sum_{E \in \mathcal{E}_h} \norm{\nabla v}_{L^2(\Omega)}^2 + 
		\sum_{e \in \Gamma_h^0 } \frac{1}{h_e} 
		\norm{\jump{v}}_{L^2(e)}^2\bigg)^{1/2}.\label{eq:def_dGnorm2}
	\end{equation}
	The following bound holds: 
	\begin{equation}
		\norm{v}_{\DG} \lesssim \norm{\boldsymbol{v}}_{1,h},  \quad \forall \boldsymbol{v} \in 
		H^1(\mathcal{E}_h)\times L^2(\Gamma_h).  
		\label{eq:1h_norm_bounded_DG}
	\end{equation}
	Indeed, the triangle inequality yields for any $\hat{v}\in L^2(\Gamma_h^0)$
	\begin{equation}
		\norm{\jump{v}}_{L^2(e)}^2 = 	\norm[0]{v|_{E_+} - v|_{E_-} }_{L^2(e)}^2 \le 		
		\norm[0]{v|_{E_+} - \hat{v} }_{L^2(e)}^2 + \norm[0]{v|_{E_-} - \hat{v} 
		}_{L^2(e)}^2.
	\end{equation}
	We sum  over all interior faces and use that $h_e \approx h_E$ which results from the shape and 
	contact regularity of the mesh (see Lemma 1.41 and Lemma 1.42 in \cite{Pietro:book}).  
	Thus, we  obtain 
	\begin{equation} \nonumber
		\begin{split}
			\sum_{e \in \Gamma_h^0} \frac{1}{h_e} \norm{\jump{v}}_{L^2(e)}^2 &\le \sum_{e \in 
				\Gamma_h^0} \frac{1}{h_e}
			\del{	
				\norm[0]{v|_{E_+} - \hat{v} }_{L^2(e)}^2 + \norm[0]{v|_{E_-} - \hat{v} 
				}_{L^2(e)}^2} \\ &\le
			\sum_{E \in \mathcal{E}_h} \sum_{e \in \mathcal{F}_{E} \cap
				\Gamma_h^0 } \frac{1}{h_e}
			\del{	
				\norm[0]{v|_{E_+} - \hat{v} }_{L^2(e)}^2 + \norm[0]{v|_{E_-} - \hat{v} 
				}_{L^2(e)}^2}\\
			&\lesssim
			\sum_{E \in \mathcal{E}_h}  \frac{1}{h_E}
			\del{	
				\norm[0]{v - \hat{v} }_{L^2(\partial E_+)}^2 + \norm[0]{v - \hat{v} 
				}_{L^2(\partial E_-)}^2} \\
			&\lesssim \sum_{E \in \mathcal{E}_h} \frac{1}{h_E} \norm{v - \hat{v}}_{L^2(\partial E)}^2,
		\end{split}
	\end{equation}
	which is sufficient to conclude. 
  Throughout the paper, we use the notation $\overline{v}$ to denote the mean value operator of any function $v$:
\[
\overline{v} = \frac{1}{\vert \Omega\vert} \int_\Omega v, \quad \forall v \in L^2(\Omega).
\]
With this notation, we recall Poincar\'{e}'s inequality valid on broken Sobolev spaces.
	\begin{lemma}[Poincar\'{e} inequality in $H^1(\mathcal{E}_h)$] \label{lem:broken_poincare2}
		Let $p^\star$ be the exponent of the Sobolev embedding of $H^1(\Omega)$ into 
		$L^p(\Omega)$ 
		defined by
		\begin{equation}
			\frac{1}{p^\star} = \frac{1}{2} - \frac{1}{d}.
		\end{equation}
		Then, for each $p \le p^\star$, there is a constant $C_P>0$ independent of $h$ such that
			%
			\begin{equation}
				\norm{v}_{L^p(\Omega)} \le C_P \del{\norm{\boldsymbol{v}}_{1,h}^2 + |\overline{v}|^2}^{1/2},
\quad \forall \boldsymbol{v} = (v,\hat{v})\in H^1(\mathcal{E}_h)\times L^2(\Gamma_h).
				\label{eq:poincare}
			\end{equation}
			%
	\end{lemma}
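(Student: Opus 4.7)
The plan is to reduce the claimed Poincar\'e inequality in the HDG product space to the classical broken Sobolev--Poincar\'e inequality in $H^1(\mathcal{E}_h)$ equipped with $\|\cdot\|_{\DG}$. The key observation is that the facet variable $\hat v$ enters the right-hand side only through the penalty term of $\|\boldsymbol v\|_{1,h}$, and the bound \eqref{eq:1h_norm_bounded_DG} shows this penalty already dominates the interior face-jump semi-norm; hence $\hat v$ can effectively be dropped at no cost.

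First, I would split off the mean. By the triangle inequality and $\|\overline v\|_{L^p(\Omega)} = |\Omega|^{1/p} |\overline v|$, it suffices to prove $\|v - \overline v\|_{L^p(\Omega)} \lesssim \|\boldsymbol v\|_{1,h}$. Next, invoking \eqref{eq:1h_norm_bounded_DG} to pass from the HDG norm to the standard broken DG semi-norm reduces the task to establishing
\[
\|v - \overline v\|_{L^p(\Omega)} \lesssim \|v\|_{\DG}, \quad \forall v \in H^1(\mathcal{E}_h),\ p \le p^\star.
\]

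This last inequality is the classical broken Poincar\'e--Sobolev inequality on $H^1(\mathcal{E}_h)$, established for instance in \cite{Pietro:book}. Its proof proceeds by constructing a conforming reconstruction $v^c \in H^1(\Omega)$ via a local averaging (Oswald-type) operator that enjoys the stability estimate $\|\nabla v^c\|_{L^2(\Omega)} \lesssim \|v\|_{\DG}$ and the approximation estimate $\|v - v^c\|_{L^2(\Omega)} \lesssim h \|v\|_{\DG}$, applies the continuous Sobolev embedding $H^1(\Omega) \hookrightarrow L^{p^\star}(\Omega)$ to $v^c - \overline{v^c}$, and estimates the discrepancy $v - v^c$ in $L^p$ via the face-jump contributions contained in $\|v\|_{\DG}$.

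The principal obstacle is precisely this final broken embedding, since the non-conformity of $H^1(\mathcal{E}_h)$ precludes a direct use of the continuous Sobolev embedding and requires the averaging construction together with a shape-regularity argument that matches the sum of interior-face jumps to the element-boundary penalty in \eqref{eq:product_space_norm_h1} (of the kind already used to derive \eqref{eq:1h_norm_bounded_DG}). Because this broken Poincar\'e--Sobolev inequality is well established in the DG literature, I would invoke it rather than reprove it in full.
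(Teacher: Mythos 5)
Your proposal is correct and follows essentially the same route as the paper: both reduce the claim to the known broken Poincar\'e--Sobolev embedding for $H^1(\mathcal{E}_h)$ in the $\|\cdot\|_{\DG}$ semi-norm (the paper cites the Lasis--S\"uli form $\|v\|_{L^p(\Omega)}^2 \lesssim \|v\|_{\DG}^2 + |\Psi(v)|^2$ with $\Psi(v)=\overline{v}$, which is just the general-functional version of your mean-zero reduction) and then absorb the jump contributions into $\|\boldsymbol{v}\|_{1,h}$ via \eqref{eq:1h_norm_bounded_DG}. The only difference is cosmetic: you subtract the mean first, whereas the paper keeps the functional $\Psi$ inside the cited estimate.
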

	\begin{proof}
			From \cite{Lasis1,Lasis2}, we have for $v\in H^1(\mathcal{E}_h)$
			\[
			\Vert v \Vert_{L^p(\Omega)}^2 \lesssim \Vert v \Vert_{\mathrm{DG}}^2 + |\Psi(v)|^2,
			\]
			with $\Psi \in H^1(\mathcal{E}_h)'$ any bounded linear functional satisfying $\Psi(1) = 1$. 
			Choosing $\Psi : v \mapsto \overline{v}$, the result then follows with \eqref{eq:1h_norm_bounded_DG}.	
		\qed
	\end{proof} 
As a consequence, with \eqref{eq:poincare}, we have
	\begin{equation}
		\norm{\boldsymbol{v}}_{0,h} 
		\leq ( C_P^2 + h^2)^{1/2} \Vert \boldsymbol{v} \Vert_{1,h} \lesssim 
		\norm{\boldsymbol{v}}_{1,h}, \quad \forall \boldsymbol{v} \in (H^1(\mathcal{E}_h)\cap L_0^2(\Omega)) \times L^2(\Gamma_h).
\label{eq:poincare_zeroavg_vector}
	\end{equation}
	From \eqref{eq:poincare_zeroavg_vector}, it is evident that $\|\cdot\|_{1,h}$ defines a norm on 
	$\bm{M}_h$. 
	We now recall important inequalities which hold in the discrete spaces, thanks to the quasi-uniformity assumption on the mesh.
		%
		\begin{align}
			\norm{v_h}_{L^q(\Omega)} &\lesssim h^{\frac{d}{q} - \frac{d}{p}} \norm{v_h}_{L^p(\Omega)}, 
			\quad \forall v_h \in S_h, \quad 1 \le p \le q \le \infty,  \label{eq:lebesgue_embed}\\
	%
		 \label{eq:inverse_ineq}
			\norm{\nabla v_h}_{L^2(E)} &\lesssim h_E^{-1} \norm{v_h}_{L^2(E)}, \quad \forall v_h \in S_h, \quad \forall E\in\mathcal{E}_h, \\
			\|\bm{v}_h\|_{1,h} &\lesssim h^{-1} \|\bm{v}_h\|_{0,h}, \quad \forall \bm{v}_h \in \bm{S}_h. 
			\label{eq:inverse_estimate_1h_to_0h}
\end{align}
	The proofs of \eqref{eq:lebesgue_embed} and \eqref{eq:inverse_ineq} can be found 
	in \cite{Brenner:book}.  To obtain \eqref{eq:inverse_estimate_1h_to_0h}, we use 
	\eqref{eq:inverse_ineq} and the definition of $j_1$. 
	
	We consider the symmetric discretization of the Laplace operator $-\Delta u$: 
	$a_{\mathcal{D}} : 
	(H^2(\mathcal{E}_h), L^{2}(\Gamma_h)) \times 
	(H^2(\mathcal{E}_h), L^{2}(\Gamma_h)) \to \mathbb{R}$: 
	\begin{multline} 
		a_{\mathcal{D}}(\boldsymbol{u},\boldsymbol{v}) = \sum_{E \in \mathcal{E}_h} \int_{E} \nabla 
		u 
		\cdot 
		\nabla 
		v 
		\dif x
		- \sum_{E \in \mathcal{E}_h} \int_{\partial E} \left( \nabla u \cdot \mathbf{n}_E \del{v - \hat{v}} +
		\del{u - \hat{u}} \nabla v  \cdot \mathbf{n}_E \right)\dif s \\
		+ \sum_{E \in \mathcal{E}_h} \frac{\sigma}{h_E} \int_{\partial E}
		\del{u - \hat{u}}\del{v - \hat{v}} \dif s.
	\end{multline}

	The parameter $\sigma>0$ is a user-specified penalty parameter. We recall the following basic results concerning the bilinear form $a_{\mathcal{D}}$  \cite{Fabien:2020, Rhebergen:2017}:
	\begin{lemma}[Coercivity and continuity]
		Provided the penalty parameter $\sigma > 0$ is chosen sufficiently large, the bilinear form is 
		coercive on $\boldsymbol{S}_h$: there exists a constant $C_\mathrm{coer} > 0$ such that
		\begin{equation} \label{eq:aD_coercive}
			a_{\mathcal{D}}(\boldsymbol{v}_h,\boldsymbol{v}_h) \ge C_\mathrm{coer} 
			\norm{\boldsymbol{v}_h}_{1,h}^2, 
			\quad \forall \boldsymbol{v}_h \in \boldsymbol{S}_h.
		\end{equation}
		Moreover, the bilinear form $a_{\mathcal{D}}$ is continuous on $\boldsymbol{S}_h \times 
		\boldsymbol{S}_h$: there exists a constant $C_\mathrm{cont} > 0$ such that
		\begin{equation} \label{eq:aD_continuous}
			|a_{\mathcal{D}}(\boldsymbol{u}_h, \boldsymbol{v}_h)| \le C_\mathrm{cont} 
			\norm{\boldsymbol{u}_h}_{1,h} 
			\norm{\boldsymbol{v}_h}_{1,h}, \quad \forall \boldsymbol{u}_h, \boldsymbol{v}_h \in 
			\boldsymbol{S}_h,
		\end{equation}
		 and furthermore, there exists a constant $C_b^\star > 0$ such that
			\begin{align}\label{eq:aD_extended_continuous}
				|a_{\mathcal{D}}(\boldsymbol{u} , \boldsymbol{v}_h)| &\le C_b^\star
				\norm{\boldsymbol{u}}_{1,h, \star} 
				\norm{\boldsymbol{v}_h}_{1,h}, \quad \forall \boldsymbol{u} \in H^2(\mathcal{E}_h) \times 
				L^2(\Gamma_h), \forall \boldsymbol{v}_h \in 
				\boldsymbol{S}_h, \\
				\label{eq:aD_extended_continuous2}
				|a_{\mathcal{D}}(\boldsymbol{u} , \boldsymbol{v})| &\le C_b^\star
				\norm{\boldsymbol{u}}_{1,h, \star} 
				\norm{\boldsymbol{v}}_{1,h,\star}, \quad \forall \boldsymbol{u}, \boldsymbol{v} \in 
				H^2(\mathcal{E}_h) \times 
				L^2  (\Gamma_h).
		\end{align} 
	\end{lemma}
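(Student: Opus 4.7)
The plan is to treat the four assertions together, since all follow from the same three ingredients: Cauchy--Schwarz on the facet integrals, the discrete trace--inverse inequality $h_E^{1/2}\|\nabla v_h \cdot \mathbf{n}_E\|_{L^2(\partial E)} \lesssim \|\nabla v_h\|_{L^2(E)}$ (valid only for $v_h \in S_h$ via \eqref{eq:inverse_ineq} combined with a standard trace estimate), and Young's inequality to balance the resulting cross terms against the volumetric gradient and the penalty.

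For coercivity, I set $\boldsymbol{u} = \boldsymbol{v}_h$ and observe that the two consistency integrals coincide, giving
\begin{equation*}
a_{\mathcal{D}}(\boldsymbol{v}_h, \boldsymbol{v}_h) = \sum_{E} \|\nabla v_h\|_{L^2(E)}^2 - 2 \sum_{E} \int_{\partial E} \nabla v_h \cdot \mathbf{n}_E (v_h - \hat v_h)\, \mathrm{d}s + \sum_E \frac{\sigma}{h_E}\|v_h - \hat v_h\|_{L^2(\partial E)}^2.
\end{equation*}
Applying Cauchy--Schwarz and Young's inequality with a small parameter $\epsilon > 0$ to the middle term, followed by the trace--inverse estimate, bounds the cross term by $\epsilon C_{\mathrm{tr}}^2 \sum_E \|\nabla v_h\|_{L^2(E)}^2 + \epsilon^{-1}\sum_E h_E^{-1}\|v_h - \hat v_h\|_{L^2(\partial E)}^2$. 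Choosing $\epsilon$ so that $1 - \epsilon C_{\mathrm{tr}}^2 > 0$ and then $\sigma$ large enough that $\sigma - \epsilon^{-1} > 0$ produces the constant $C_{\mathrm{coer}}$ and yields \eqref{eq:aD_coercive}.

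For the standard continuity estimate \eqref{eq:aD_continuous}, I apply Cauchy--Schwarz termwise: the volumetric term is controlled directly by the $\|\cdot\|_{1,h}$ seminorm, the penalty term by the facet contribution of $\|\cdot\|_{1,h}$, and each of the two consistency integrals by the same quantities after using the trace--inverse inequality to convert $h_E^{1/2}\|\nabla w_h \cdot \mathbf{n}_E\|_{L^2(\partial E)}$ to $\|\nabla w_h\|_{L^2(E)}$ for both $w_h = u_h$ and $w_h = v_h$. For the extended continuity \eqref{eq:aD_extended_continuous}, the function $\boldsymbol{u}$ is no longer discrete, so the trace--inverse inequality is unavailable on $\nabla u \cdot \mathbf{n}_E$; however, this is precisely the reason the $\|\cdot\|_{1,h,\star}$ norm was introduced, and I keep the factor $h_E^{1/2}\|\nabla u \cdot \mathbf{n}_E\|_{L^2(\partial E)}$ intact, bounding it by $\|\boldsymbol{u}\|_{1,h,\star}$ and pairing it with $h_E^{-1/2}\|v_h - \hat v_h\|_{L^2(\partial E)} \le \|\boldsymbol{v}_h\|_{1,h}$. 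For \eqref{eq:aD_extended_continuous2}, the same reasoning is applied symmetrically on the $\boldsymbol{v}$ side.

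There is no genuine obstacle here; the argument is entirely standard for symmetric interior-penalty HDG discretizations and amounts to careful bookkeeping in the Young's inequality splitting and a correct choice of $\sigma$. The only subtlety worth highlighting is that the trace--inverse step must not be applied to non-discrete arguments, which is exactly what motivates the definition of $\|\cdot\|_{1,h,\star}$ and distinguishes \eqref{eq:aD_extended_continuous}--\eqref{eq:aD_extended_continuous2} from \eqref{eq:aD_continuous}.
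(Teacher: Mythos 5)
Your proposal is correct and is the standard symmetric interior-penalty argument (discrete trace--inverse inequality on the consistency terms, Young's inequality to absorb the cross term, $\sigma$ large enough to keep the penalty coefficient positive, and the $\|\cdot\|_{1,h,\star}$ norm retained precisely where the trace--inverse step is unavailable for non-discrete arguments). The paper does not prove this lemma itself but recalls it from the cited references, and your argument is exactly the one used there, so there is nothing to compare beyond noting that your bookkeeping and the choice of $\sigma$ are sound.
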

	Henceforth, we will always assume $\sigma > 0$ is sufficiently large to ensure the coercivity of 
	the 
	bilinear form $a_{\mathcal{D}}$. 
	\section{Discrete functional analysis tools} The main goal of this section is to show discrete 
	counter-parts to the Agmon (\Cref{lem:disc_agmon}) and Gagliardo--Nirenberg 
	(\Cref{lemma:discrete_GN}) inequalities in the HDG setting 
	following ideas in 
	\cite{Chave:2016,Kay:2009}. Such inequalities are important to establish stability estimates (see 
	\Cref{thm:linf_stability}) which allow us to show convergence of the HDG scheme without any 
	regularization of the potential function $\Phi$. We start with recalling and defining continuous and 
	discrete Green and Laplace operators.
	\subsection{Continuous Green operator}
We recall the continuous Green operator $G:V'\rightarrow V$, where $V = H^1(\Omega)\cap L_0^2(\Omega)$. For any $f\in V'$, 
$G(f)$ is the unique function in $V$ such that
	\begin{equation} 
		\int_{\Omega} \nabla G(f) \cdot \nabla v \dif x = \langle f, v \rangle.
	\end{equation}

 The embedding $V \subset L_0^2(\Omega)$ is dense
	and therefore, if $f \in L_0^2(\Omega)$ and $(\cdot,\cdot)_\Omega$ denote the $L^2$ inner-product, we find:
	\begin{equation} \label{eq:regularity_shift}
		\norm{G(f)}_{H^1(\Omega)} \lesssim \norm{f}_{V'} = \sup_{ \substack{ v \in H^1(\Omega) \cap 
				L_0^2(\Omega) \\ 
				\norm{v}_{H^1(\Omega)} = 1}}| (f, v)_{\Omega} |.  
	\end{equation}
In addition, if $\Omega$ is convex, we have
\begin{equation}\label{eq:ellipticity}
\Vert G(f)\Vert_{H^2(\Omega)} \lesssim  \Vert f \Vert_{L^2(\Omega)}.
\end{equation}

	\subsection{Discrete Green operator}
	We introduce a discrete analogue of the Green operator, $G$,  on the space $\boldsymbol{S}_h$. 
	Consider $\boldsymbol{G}_h: \boldsymbol{S}_h \to \boldsymbol{M}_h$ satisfying: 
	\begin{equation} \label{eq:def_disc_green}
		a_{\mathcal{D}}(\boldsymbol{G}_h \boldsymbol{w}_h, \boldsymbol{v}_h) = 
		(\boldsymbol{w}_h, 
		\boldsymbol{v}_h)_{0,h}, \quad \forall \boldsymbol{v}_h \in \boldsymbol{M}_h, \quad \forall \boldsymbol{w}_h \in \boldsymbol{S}_h.
	\end{equation}
	Note that the right-hand side of \eqref{eq:def_disc_green} defines, for fixed $\boldsymbol{w}_h 
	\in \boldsymbol{S}_h$, a bounded linear functional on $\boldsymbol{M}_h$ by the 
	Cauchy-Schwarz's inequality and equivalence of norms on finite dimensional spaces. This fact, combined with the fact 
	that 
	$a_{\mathcal{D}}(\cdot,\cdot)$ 
	is coercive on $\boldsymbol{M}_h$, shows that the operator $\boldsymbol{G}_h$ is 
	well defined by 
	the Lax-Milgram theorem.
	\subsection{Discrete Laplace operator} \label{ss:disc_lapl}
	We introduce a discrete Laplace 
	operator $\boldsymbol{\Delta}_h: \boldsymbol{S}_h \to \boldsymbol{M}_h$ as the unique solution to
	\begin{equation} \label{eq:discrete_laplacian}
		-	(\boldsymbol{\Delta}_h \boldsymbol{w}_h, \boldsymbol{v}_h)_{0,h} = 
		a_{\mathcal{D}}(\boldsymbol{w}_h,\boldsymbol{v}_h), \quad 
		\forall \boldsymbol{v}_h \in \boldsymbol{M}_h.
	\end{equation}
	That $\boldsymbol{\Delta}_h$ is well-defined follows from the Riesz representation theorem, as the 
	right-hand side defines a bounded linear functional on $\boldsymbol{M}_h$ while 
	$(\cdot,\cdot)_{0,h}$ 
	defines an inner-product on $\boldsymbol{S}_h$. We now show that $-\boldsymbol{G}_h$ is the 
	inverse of the discrete 
	Laplacian 
	$\boldsymbol{\Delta}_h$ restricted to $\boldsymbol{M}_h$. To this end, the 
	definitions of $\boldsymbol{G}_h$ 
	\eqref{eq:def_disc_green} and $\boldsymbol{\Delta}_h$ \eqref{eq:discrete_laplacian} yield
	\begin{equation}
		a_{\mathcal{D}}(\boldsymbol{G}_h \boldsymbol{\Delta}_h \boldsymbol{w}_h, \boldsymbol{v}_h) = 
		(\boldsymbol{\Delta}_h \boldsymbol{w}_h,\boldsymbol{v}_h)_{0,h} = 
		-a_{\mathcal{D}}(\boldsymbol{w}_h, \boldsymbol{v}_h), \quad \forall \boldsymbol{w}_h, 
		\boldsymbol{v}_h \in 
		\boldsymbol{M}_h,
	\end{equation}
	so that
	\begin{equation}
		a_{\mathcal{D}}(\boldsymbol{G}_h \boldsymbol{\Delta}_h \boldsymbol{w}_h + \boldsymbol{w}_h, 
		\boldsymbol{v}_h) = 0, \quad \forall \boldsymbol{v}_h \in \boldsymbol{M}_h.
	\end{equation}
	Choosing $\boldsymbol{v}_h = \boldsymbol{G}_h \boldsymbol{\Delta}_h \boldsymbol{w}_h + 
	\boldsymbol{w}_h \in \boldsymbol{M}_h$, using the coercivity of the bilinear form $a_{\mathcal{D}}$ 
	\eqref{eq:aD_coercive}, and noting that $\norm{\cdot}_{1,h}$ is a norm on $\boldsymbol{M}_h$, we 
	find 
	that
	\begin{equation}
		\boldsymbol{w}_h = - \boldsymbol{G}_h \boldsymbol{\Delta}_h \boldsymbol{w}_h, \quad \forall 
		\boldsymbol{w}_h \in \boldsymbol{M}_h. \label{eq:G_h_inverse_laplace}
	\end{equation}
	\subsection{Properties of $\bm{\Delta}_h$ and $\bm{G}_h$}
	To set notation, for $\bm{w}_h \in \bm{S}_h$, we write
	\begin{equation}
		\begin{split}
			\boldsymbol{G}_h \boldsymbol{w}_h  = (G_h \boldsymbol{w}_h, \hat{G}_h \boldsymbol{w}_h) 
			\in M_h \times \hat{S}_h,    \,\,\,
			\boldsymbol{\Delta}_h \boldsymbol{w}_h = (\Delta_h \boldsymbol{w}_h, \hat{\Delta}_h 
			\boldsymbol{w}_h ) \in M_h \times \hat{S}_h. 
		\end{split}
	\end{equation}
	In other words, $\Delta_h \boldsymbol{w}_h$ (or $G_h \boldsymbol{w}_h$) and $\hat{\Delta}_h 
	\boldsymbol{w}_h$ (or $\hat{G}_h \boldsymbol{w}_h$) refer to, respectively, the element and face 
	degrees of freedom obtained from $\boldsymbol{\Delta}_h \boldsymbol{w}_h$ (or 
	$\boldsymbol{G}_h 
	\boldsymbol{w}_h$).
	
	We now proceed to show properties for $\bm{\Delta}_h$ and $\bm{G}_h$. To this end, we use the 
	local $L^2$ projections. 
	Let $E \in \mathcal{E}_h$, $e \in \Gamma_h$ and denote by $\pi_h$ and 
	$\hat{\pi}_h$ the 
	orthogonal 
	$L^2$-projections satisfying
	\begin{align}
		\int_E \del{ v - \pi_h v} w_h \dif x = 0, \quad \forall w_h \in \mathbb{P}_k(E),  \,\,\, 
		\int_e \del{ v - \hat{\pi}_h v} \hat{w}_h \dif s& = 0, \quad \forall \hat{w}_h \in \mathbb{P}_k(e). 
		\label{eq:l2_projections}
	\end{align}
	\begin{lemma}[Properties of $\pi_h$ and $\hat{\pi}_h$] \label{prop:proj_estimates} Let $E \in 
		\mesh$ and $1\leq p \leq \infty$. We have that 
		\begin{align}
			\norm{\pi_h v}_{L^p(E)} &\lesssim \norm{v}_{L^p(E)}, \quad \forall v \in 
			L^p(E),  \label{eq:lp_stab_proj}\\
			\norm{ \nabla \pi_h v}_{L^p(E)} &\lesssim |v|_{W^{1,p}(E)}, \quad \forall v \in 
			W^{1,p}(E).  \label{eq:w1p_stab_proj}
		\end{align}
		Moreover, the following approximation results hold:  let $s \in 
		\mathbb{N}$ such that $1 \le s \le k+1$. Then, it holds for all $v \in 
		W^{s,p}(E)$ that 
		\begin{align}
			|v - \pi_h v|_{W^{m,p}(E)} & \lesssim h_E^{s-m} |v|_{W^{s,p}(E)}, && \forall \, 0 \le m \le s, 
			\label{eq:l2_proj_elem_est} \\
			|v - \pi_h v|_{W^{m,p}(e)} & \lesssim h_E^{s-m - \frac{1}{p}} |v|_{W^{s,p}(E)},  &&\forall \,
			0 \le m \le s-1,
			\label{eq:l2_proj_elem_face_est}
		\end{align}
		where $e \in \mathcal{F}_E$. 
		Moreover, if $v \in H^s(E)$, then it holds that
		\begin{equation}
			\norm{v - \hat{\pi}_h v}_{L^2(e)} \lesssim h_E^{s - \frac{1}{2}} |v|_{H^s(E)}.
			\label{eq:l2_proj_face_est} 
		\end{equation}
	\end{lemma}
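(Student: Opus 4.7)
The statement gathers standard local $L^2$-projection bounds, so I would organize the proof as a short sequence of steps following the classical scaling/Bramble--Hilbert strategy, treating \eqref{eq:lp_stab_proj}--\eqref{eq:w1p_stab_proj} first, then \eqref{eq:l2_proj_elem_est}, and deducing \eqref{eq:l2_proj_elem_face_est}--\eqref{eq:l2_proj_face_est} from these by a trace argument and optimality of $\hat{\pi}_h$.

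\textbf{Stability.} For \eqref{eq:lp_stab_proj}, I would pull $E$ back to a reference simplex $\hat E$ via the affine map $F_E:\hat E\to E$. On $\hat E$ the orthogonal $L^2$ projection onto $\mathbb{P}_k(\hat E)$ is bounded in $L^p(\hat E)$ simply because all norms are equivalent on the finite-dimensional space $\mathbb{P}_k(\hat E)$, and the operator is $L^2$-bounded by construction. A change of variables using $|\det DF_E|\approx h_E^d$ then transports the estimate back to $E$ with a constant independent of $h_E$. For \eqref{eq:w1p_stab_proj} I would use that $\pi_h$ preserves constants: writing $\overline v_E=|E|^{-1}\int_E v$, one has $\nabla\pi_h v = \nabla\pi_h(v-\overline v_E)$, and the local inverse inequality \eqref{eq:inverse_ineq} (in its $L^p$ version, obtained by the same scaling argument) combined with \eqref{eq:lp_stab_proj} and the Poincar\'e--Wirtinger inequality $\|v-\overline v_E\|_{L^p(E)}\lesssim h_E|v|_{W^{1,p}(E)}$ yields the claim.

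\textbf{Element approximation \eqref{eq:l2_proj_elem_est}.} Since $s\le k+1$, the projector $\pi_h$ reproduces every $q\in\mathbb{P}_{s-1}(E)$, hence $v-\pi_h v=(v-q)-\pi_h(v-q)$. Taking $q$ to be the averaged Taylor polynomial of $v$ of degree $s-1$ on a ball contained in $E$ (Bramble--Hilbert), one obtains $|v-q|_{W^{j,p}(E)}\lesssim h_E^{s-j}|v|_{W^{s,p}(E)}$ for $0\le j\le s$. The case $m=0$ then follows from \eqref{eq:lp_stab_proj}, and for $1\le m\le s$ one invokes the local inverse inequality to absorb the extra derivatives on $\pi_h(v-q)$ into the $L^p$ norm and concludes by the same Bramble--Hilbert bound.

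\textbf{Face estimates \eqref{eq:l2_proj_elem_face_est} and \eqref{eq:l2_proj_face_est}.} For \eqref{eq:l2_proj_elem_face_est}, I would apply the scaled trace inequality
\[
\|w\|_{W^{m,p}(e)}\lesssim h_E^{-1/p}\|w\|_{W^{m,p}(E)}+h_E^{1-1/p}|w|_{W^{m+1,p}(E)},
\]
valid for $w\in W^{m+1,p}(E)$, to $w=v-\pi_h v$ and combine with the two estimates produced by \eqref{eq:l2_proj_elem_est} at levels $m$ and $m+1$. For \eqref{eq:l2_proj_face_est}, the key observation is that $\pi_h v|_e\in\mathbb{P}_k(e)$, so the optimality of $\hat\pi_h$ as the $L^2(e)$ projection onto $\mathbb{P}_k(e)$ gives $\|v-\hat\pi_h v\|_{L^2(e)}\le\|v-\pi_h v\|_{L^2(e)}$; applying \eqref{eq:l2_proj_elem_face_est} with $m=0$, $p=2$ closes the argument.

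\textbf{Main obstacle.} None of the individual steps is deep; everything reduces to textbook scaling and Bramble--Hilbert. The only slightly delicate point is the $L^p$-boundedness of $\pi_h$ for $p\neq 2$, which is not automatic for the $L^2$ projection in general but is immediate here because we project onto a \emph{finite-dimensional} polynomial space on a shape-regular simplex, so norm equivalence on the reference element does the job. Beyond this, the argument is standard bookkeeping, and the conclusions match those collected in \cite{Pietro:book,Brenner:book}.
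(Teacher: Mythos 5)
Your proposal is correct, and for the one estimate the paper actually proves --- \eqref{eq:l2_proj_face_est} --- you use exactly the paper's argument: $\hat{\pi}_h$ is the best $L^2(e)$-approximation in $\mathbb{P}_k(e)$ while $\pi_h v|_e$ is a competitor, so $\norm{v-\hat{\pi}_h v}_{L^2(e)} \le \norm{v - \pi_h v}_{L^2(e)}$ and \eqref{eq:l2_proj_elem_face_est} with $m=0$, $p=2$ finishes it. For \eqref{eq:lp_stab_proj}--\eqref{eq:l2_proj_elem_face_est} the paper simply cites \cite{DiPietro:2017}; the scaling, Bramble--Hilbert, and scaled-trace arguments you supply are the standard ones underlying that reference and are sound. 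The only loose point is the $L^p$-stability for $1\le p<2$: there $L^p(\hat E)\not\subset L^2(\hat E)$, so "$L^2$-bounded plus norm equivalence" does not directly apply; one instead writes $\hat{\pi}v=\sum_i(v,\phi_i)\phi_i$ for an orthonormal basis of $\mathbb{P}_k(\hat E)$ and bounds each coefficient by H\"older, which both defines the extension to $L^p(\hat E)$ and gives the bound --- a cosmetic fix, not a gap.
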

	\begin{proof}
		Proofs of \eqref{eq:lp_stab_proj}-\eqref{eq:l2_proj_elem_face_est} can be found in 
		\cite{DiPietro:2017}. The estimate 
		\eqref{eq:l2_proj_face_est} follows from \eqref{eq:l2_proj_elem_face_est} by observing that the 
		best approximation property of the $L^2$-projection $\hat{\pi}_h$ ensures that
		\begin{equation}
			\norm{v - \hat{\pi}_h v}_{L^2(e)} \le \norm{v - \pi_h v}_{L^2(e)}, \quad \forall v \in L^2(E), \quad
\forall e \in \partial E.
		\end{equation}
		\qed
	\end{proof}
	With the above properties, we show the following inverse  estimates for the discrete Laplacian. 
	\begin{lemma}
		The following estimates hold:
		\begin{align}
			\norm{\boldsymbol{\Delta}_h \boldsymbol{w}_h}_{0,h} &\lesssim h^{-1}  
			\label{eq:lapl_l2_bnd}
			\norm{\boldsymbol{w}_h}_{1,h}, \quad\forall \boldsymbol{w}_h \in \boldsymbol{S}_h, \\
			\norm{\Delta_h \boldsymbol{w}_h}_{V'} &\lesssim\norm{\boldsymbol{w}_h}_{1,h}, \quad\forall 
			\boldsymbol{w}_h \in \boldsymbol{S}_h. \label{eq:lapl_h-1_bnd}
		\end{align}
	\end{lemma}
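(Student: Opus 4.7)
The plan is to establish the two estimates in sequence, with the $L^2$-type bound \eqref{eq:lapl_l2_bnd} feeding into the proof of \eqref{eq:lapl_h-1_bnd}. For \eqref{eq:lapl_l2_bnd}, I would simply test the defining relation \eqref{eq:discrete_laplacian} with $\boldsymbol{v}_h = \boldsymbol{\Delta}_h \boldsymbol{w}_h \in \boldsymbol{M}_h$ to obtain $\|\boldsymbol{\Delta}_h \boldsymbol{w}_h\|_{0,h}^2 = -a_{\mathcal{D}}(\boldsymbol{w}_h, \boldsymbol{\Delta}_h \boldsymbol{w}_h)$, then apply continuity \eqref{eq:aD_continuous} and the inverse bound \eqref{eq:inverse_estimate_1h_to_0h} to the factor involving $\boldsymbol{\Delta}_h \boldsymbol{w}_h$, and divide through.

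For the negative-norm bound \eqref{eq:lapl_h-1_bnd}, I would use the duality characterization of the $V'$-norm. For fixed $v \in V$ with $\|v\|_{H^1(\Omega)} = 1$, I would build an admissible test function in $\boldsymbol{M}_h$ by setting $\boldsymbol{v}_h = (\pi_h v - \overline{\pi_h v}, \hat{\pi}_h v - \overline{\pi_h v})$; the subtraction of the mean is needed precisely so that the element part lies in $M_h$. Plugging this into \eqref{eq:discrete_laplacian} and expanding, two simplifications would occur: since $\Delta_h \boldsymbol{w}_h \in M_h$ has zero mean, the constant $\overline{\pi_h v}$ drops out of the bulk integral, and since $\Delta_h \boldsymbol{w}_h|_E \in \mathbb{P}_k(E)$, the orthogonality \eqref{eq:l2_projections} kills the contribution of $v - \pi_h v$. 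These manipulations would yield the identity
\begin{equation*}
\int_\Omega \Delta_h \boldsymbol{w}_h \cdot v \, \mathrm{d}x = -a_{\mathcal{D}}(\boldsymbol{w}_h, \boldsymbol{v}_h) - j_0(\boldsymbol{\Delta}_h \boldsymbol{w}_h, \boldsymbol{v}_h).
\end{equation*}

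It then remains to bound both terms on the right by $\|\boldsymbol{w}_h\|_{1,h}$. A stability estimate $\|\boldsymbol{v}_h\|_{1,h} \lesssim \|v\|_{H^1(\Omega)}$ would follow from the $W^{1,2}$-stability \eqref{eq:w1p_stab_proj} of $\pi_h$ together with the face approximation bounds \eqref{eq:l2_proj_elem_face_est}--\eqref{eq:l2_proj_face_est}, applied to $\|\pi_h v - \hat{\pi}_h v\|_{L^2(\partial E)}$ through a triangle inequality routed via $v$. The first term on the right is then controlled by continuity \eqref{eq:aD_continuous}. The second is handled by using the bound $|j_0(\boldsymbol{u},\boldsymbol{v})| \lesssim h \|\boldsymbol{u}\|_{0,h} \|\boldsymbol{v}\|_{1,h}$ from \eqref{eq:bound_j0_1h} together with the already-proven \eqref{eq:lapl_l2_bnd}, which consumes the factor of $h$ exactly.

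The main obstacle is controlling the penalty contribution $j_0(\boldsymbol{\Delta}_h \boldsymbol{w}_h, \boldsymbol{v}_h)$ in the second estimate: it arises because the $L^2$-projected test pair $(\pi_h v, \hat{\pi}_h v)$ has a nonzero element-facet mismatch, and a naive estimate would lose a factor of $h^{-1}$. The cancellation is possible only because the $h$-weight in $j_0$ matches the $h^{-1}$ in the bound \eqref{eq:lapl_l2_bnd}, which is why the two estimates must be proved in this order.
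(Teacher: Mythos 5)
Your proposal is correct and follows essentially the same route as the paper: test with $\boldsymbol{\Delta}_h\boldsymbol{w}_h$ plus continuity and the inverse estimate for \eqref{eq:lapl_l2_bnd}, then for \eqref{eq:lapl_h-1_bnd} replace $\varphi$ by $\pi_h\varphi$ via $L^2$-orthogonality, use the identity $(\Delta_h\boldsymbol{w}_h,\pi_h\varphi)_\Omega=-a_{\mathcal{D}}(\boldsymbol{w}_h,(\pi_h\varphi,\hat\pi_h\varphi))-j_0(\boldsymbol{\Delta}_h\boldsymbol{w}_h,(\pi_h\varphi,\hat\pi_h\varphi))$, the $H^1$-stability of the projected pair, and \eqref{eq:bound_j0_1h} combined with \eqref{eq:lapl_l2_bnd} to absorb the $j_0$ term. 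The only cosmetic difference is your explicit mean subtraction $\overline{\pi_h v}$, which is vacuous since $v\in V\subset L_0^2(\Omega)$ implies $\overline{\pi_h v}=0$.
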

	
	\begin{proof}
		We begin by showing \eqref{eq:lapl_l2_bnd}. Choosing $\boldsymbol{v}_h = 
		\boldsymbol{\Delta}_h 
		\boldsymbol{w}_h$ in \eqref{eq:discrete_laplacian} and using the continuity of the bilinear 
		form 
		$a_{\mathcal{D}}$ \eqref{eq:aD_continuous} and \eqref{eq:inverse_estimate_1h_to_0h}, we find
		\begin{equation}
			\norm{\boldsymbol{\Delta}_h \boldsymbol{w}_h}_{0,h}^2  \lesssim 
			\norm{\boldsymbol{w}_h}_{1,h} 	
			\norm{\boldsymbol{\Delta}_h \boldsymbol{w}_h}_{1,h} \lesssim h^{-1} 
			\norm{\boldsymbol{w}_h}_{1,h} 		\norm{\boldsymbol{\Delta}_h \boldsymbol{w}_h}_{0,h} .
		\end{equation}
		This shows \eqref{eq:lapl_l2_bnd}. Next, we show \eqref{eq:lapl_h-1_bnd}. As $\Delta_h 
		\boldsymbol{w}_h \in M_h \subset 
		L_0^2(\Omega)$, we have
		\begin{equation} \label{eq:V'_bnd_1}
			\norm{\Delta_h \boldsymbol{w}_h}_{V'} = \sup_{ \substack{ \varphi \in 
					H^1(\Omega)\cap L_0^2(\Omega) \\ \norm{\varphi}_{H^1(\Omega)} = 1 }} |(\Delta_h 
			\boldsymbol{w}_h,\varphi)_{\Omega}| = \sup_{ \substack{ \varphi \in 
					H^1(\Omega)\cap L_0^2(\Omega) \\ \norm{\varphi}_{H^1(\Omega)} = 1 }} |(\Delta_h 
			\boldsymbol{w}_h,\pi_h \varphi)_{\Omega}|.
		\end{equation}
		%
		By the definition of the discrete Laplace operator \eqref{eq:discrete_laplacian} and the 
		inner-product 
		\eqref{eq:product_space_inner_product_l2},
		%
		\begin{equation} \label{eq:V'_bnd_2}
			(\Delta_h 
			\boldsymbol{w}_h,\pi_h \varphi)_{\Omega} = - 
			a_{\mathcal{D}}(\boldsymbol{w}_h,(\pi_h 
			\varphi, 
			\hat{\pi}_h \varphi)) - j_{0}(\boldsymbol{\Delta}_h \boldsymbol{w}_h, (\pi_h 
			\varphi, 
			\hat{\pi}_h \varphi)). 
		\end{equation}
		%
		Observe that  \Cref{prop:proj_estimates} implies that 
		%
		\begin{align}
			\norm[0]{(\pi_h \varphi, \hat{\pi}_h \varphi)}_{1,h} \lesssim \norm{\varphi}_{H^1(\Omega)}.  
			\label{eq:approx_prop_disc_lap_2}
		\end{align}
		Returning to \eqref{eq:V'_bnd_1}, using \eqref{eq:V'_bnd_2}, the
		continuity of the bilinear form $a_{\mathcal{D}}$ \eqref{eq:aD_continuous},
		\eqref{eq:bound_j0_1h},  \eqref{eq:approx_prop_disc_lap_2} and \eqref{eq:lapl_l2_bnd}, we obtain
		%
		\begin{equation}
			\begin{split}
				\big| (\Delta_h 
				\boldsymbol{w}_h,\pi_h \varphi)_{\Omega} \big| & \lesssim (\,\norm{ 
					\boldsymbol{w}_h}_{1,h} + h \norm{ 
					\boldsymbol{\Delta}_h \boldsymbol{w}_h}_{0,h} ) \norm{ (\pi_h \varphi, \hat{\pi}_h 
					\varphi)}_{1,h}  \lesssim \norm{\boldsymbol{w}_h}_{1,h} \norm{\varphi}_{H^1(\Omega)} .
			\end{split}
		\end{equation}
		Therefore, for any $\varphi \in H^1(\Omega) \cap L_0^2(\Omega)$ satisfying 
		$\norm{\varphi}_{H^1(\Omega)} = 1$, it holds that
		\begin{equation}
			\big| (\Delta_h 
			\boldsymbol{w}_h, \varphi)_{\Omega} \big| \lesssim  \norm{\boldsymbol{w}_h}_{1,h}.
		\end{equation}
		The result follows.
		\qed
	\end{proof}
	We proceed to show approximation properties of the discrete Green operator.
	\begin{lemma}   \label{lem:green_estimates}
		Assume that $\Omega$ is convex.  Then, for any $\bm{w}_h \in \bm{M}_h$,  the 
		following estimates hold:
		\begin{align}
			\norm{\boldsymbol{G}_h \boldsymbol{w}_h - \boldsymbol{\pi}_h(G w_h)}_{1,h} 
			&\lesssim 
			h 
			\norm{\boldsymbol{w}_h}_{0,h}, \label{eq:green_op_error_1} \\
			\norm{G_h \boldsymbol{w}_h - \pi_h(G w_h)}_{L^2(\Omega)} & \lesssim h^2 
			\norm{\boldsymbol{w}_h}_{0,h}, \label{eq:green_op_error_2}
		\end{align}
		where we define $\boldsymbol{\pi}_h(G w_h) = (\pi_h G w_h, \hat{\pi}_h G w_h) \in 
		\boldsymbol{M}_h$.
	\end{lemma}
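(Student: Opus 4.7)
The plan is to establish the two estimates sequentially: a Nitsche-type energy argument for \eqref{eq:green_op_error_1}, then an Aubin--Nitsche duality argument for \eqref{eq:green_op_error_2}. Throughout, write $\boldsymbol{u} = (Gw_h, Gw_h|_{\Gamma_h})$. Convexity of $\Omega$ and \eqref{eq:ellipticity} yield $\norm{Gw_h}_{H^2(\Omega)} \lesssim \norm{w_h}_{L^2(\Omega)} \le \norm{\boldsymbol{w}_h}_{0,h}$, while cell-wise integral preservation by $\pi_h$ ensures $\boldsymbol{\pi}_h(Gw_h) \in \boldsymbol{M}_h$. Since $u - \hat{u} = 0$ on every face and $\nabla u \cdot \boldsymbol{n}$ is single-valued across interior faces and vanishes on $\partial \Omega$, inserting $\boldsymbol{u}$ in the formula for $a_{\mathcal{D}}$ and integrating by parts element-wise yields the \emph{consistency identity}
\[
a_{\mathcal{D}}(\boldsymbol{u}, \boldsymbol{v}) = (w_h, v)_{\Omega}, \qquad \forall \boldsymbol{v} = (v, \hat v) \in \boldsymbol{S}_h.
\]

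For \eqref{eq:green_op_error_1}, set $\boldsymbol{\eta}_h = \boldsymbol{G}_h \boldsymbol{w}_h - \boldsymbol{\pi}_h(Gw_h) \in \boldsymbol{M}_h$. Subtracting consistency from \eqref{eq:def_disc_green} gives the error equation $a_{\mathcal{D}}(\boldsymbol{G}_h \boldsymbol{w}_h - \boldsymbol{u}, \boldsymbol{v}_h) = j_0(\boldsymbol{w}_h, \boldsymbol{v}_h)$ for $\boldsymbol{v}_h \in \boldsymbol{M}_h$. Using coercivity \eqref{eq:aD_coercive} and splitting $\boldsymbol{\eta}_h = (\boldsymbol{G}_h \boldsymbol{w}_h - \boldsymbol{u}) + (\boldsymbol{u} - \boldsymbol{\pi}_h(Gw_h))$ in the test argument, the $j_0$ contribution is bounded via \eqref{eq:bound_j0_1h} by $h\norm{\boldsymbol{w}_h}_{0,h}\norm{\boldsymbol{\eta}_h}_{1,h}$, and the approximation contribution is bounded by \eqref{eq:aD_extended_continuous} together with \Cref{prop:proj_estimates} (with $s=2$) by $h\norm{\boldsymbol{w}_h}_{0,h}\norm{\boldsymbol{\eta}_h}_{1,h}$; dividing by $\norm{\boldsymbol{\eta}_h}_{1,h}$ yields \eqref{eq:green_op_error_1}.

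For \eqref{eq:green_op_error_2}, let $e_h = G_h\boldsymbol{w}_h - \pi_h Gw_h \in M_h$ and let $\psi = G(e_h)$, so $-\Delta\psi = e_h$ with Neumann data and $\norm{\psi}_{H^2} \lesssim \norm{e_h}_{L^2}$ by \eqref{eq:ellipticity}. Introducing $\boldsymbol{\Pi}\boldsymbol{\psi} := (\pi_h\psi, \hat\pi_h\psi) \in \boldsymbol{M}_h$ and $\boldsymbol{\psi} = (\psi, \psi|_{\Gamma_h})$, consistency gives $\norm{e_h}_{L^2}^2 = a_{\mathcal{D}}(\boldsymbol{\psi}, \boldsymbol{\eta}_h)$. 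The split $\boldsymbol{\psi} = (\boldsymbol{\psi} - \boldsymbol{\Pi}\boldsymbol{\psi}) + \boldsymbol{\Pi}\boldsymbol{\psi}$ treats the difference part by \eqref{eq:aD_extended_continuous} combined with \Cref{prop:proj_estimates} and \eqref{eq:green_op_error_1}, giving an $O(h^2\norm{e_h}_{L^2}\norm{\boldsymbol{w}_h}_{0,h})$ bound. Using symmetry of $a_{\mathcal{D}}$, the definition \eqref{eq:def_disc_green} against $\boldsymbol{\Pi}\boldsymbol{\psi}\in\boldsymbol{M}_h$, and consistency on $\boldsymbol{u}$, the remaining piece rewrites as
\[
a_{\mathcal{D}}(\boldsymbol{\Pi}\boldsymbol{\psi}, \boldsymbol{\eta}_h) = j_0(\boldsymbol{w}_h, \boldsymbol{\Pi}\boldsymbol{\psi}) + a_{\mathcal{D}}(\boldsymbol{u} - \boldsymbol{\pi}_h(Gw_h), \boldsymbol{\Pi}\boldsymbol{\psi}).
\]
The $j_0$ term picks up an extra power of $h$ by first subtracting $j_0(\boldsymbol{w}_h, (\psi,\psi)) = 0$ (continuity of $\psi$ on faces) and then applying Cauchy--Schwarz using the face projection estimate \eqref{eq:l2_proj_elem_face_est}, giving $O(h^2\norm{\boldsymbol{w}_h}_{0,h}\norm{e_h}_{L^2})$.

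The main obstacle is the term $a_{\mathcal{D}}(\boldsymbol{u} - \boldsymbol{\pi}_h(Gw_h), \boldsymbol{\Pi}\boldsymbol{\psi})$: extended continuity alone gives only $O(h)$ since $\norm{\boldsymbol{\Pi}\boldsymbol{\psi}}_{1,h} \lesssim \norm{\psi}_{H^1}$ is not small. The key trick is to split $\boldsymbol{\Pi}\boldsymbol{\psi} = \boldsymbol{\psi} + (\boldsymbol{\Pi}\boldsymbol{\psi} - \boldsymbol{\psi})$; the difference contribution is $O(h^2)$ by \eqref{eq:aD_extended_continuous2} and projection estimates, and the remaining $a_{\mathcal{D}}(\boldsymbol{u} - \boldsymbol{\pi}_h(Gw_h), \boldsymbol{\psi})$ actually \emph{vanishes}. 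Indeed, using symmetry to swap to $a_{\mathcal{D}}(\boldsymbol{\psi}, \boldsymbol{u} - \boldsymbol{\pi}_h(Gw_h))$ and exploiting $\psi - \hat\psi = 0$ to kill the symmetric and penalty terms, element-wise integration by parts of $\int_E \nabla\psi\cdot\nabla(u - \pi_h u)$ produces face terms $\int_{\partial E}(\nabla\psi\cdot\boldsymbol{n}_E)(u - \hat\pi_h u)$ that cancel globally (continuity of $\nabla\psi\cdot\boldsymbol{n}$ across interior faces, Neumann condition on $\partial\Omega$, and single-valuedness of $\hat\pi_h u$), leaving only $-\int_\Omega \Delta\psi(u - \pi_h u) = \int_\Omega e_h(u - \pi_h u) = 0$ by cell-wise $L^2$-orthogonality of $\pi_h$ against $\mathbb{P}_k$, since $e_h|_E \in \mathbb{P}_k(E)$. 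Collecting all bounds gives $\norm{e_h}_{L^2}^2 \lesssim h^2\norm{e_h}_{L^2}\norm{\boldsymbol{w}_h}_{0,h}$ and hence \eqref{eq:green_op_error_2}.
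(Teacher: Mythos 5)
Your proof is correct, and the first estimate \eqref{eq:green_op_error_1} follows essentially the same energy argument as the paper (modified Galerkin orthogonality from the consistency identity, coercivity, the bound \eqref{eq:bound_j0_1h} on $j_0$, extended continuity, and the approximation properties of $\boldsymbol{\pi}_h$ plus \eqref{eq:ellipticity}). The duality step for \eqref{eq:green_op_error_2} is organized genuinely differently. The paper first uses the elementwise $L^2$-orthogonality of $\pi_h$ to write $\norm{e_h}_{L^2(\Omega)}^2 = (e_h, G w_h - G_h\boldsymbol{w}_h)_\Omega = a_{\mathcal{D}}(\boldsymbol{G}w_h - \boldsymbol{G}_h\boldsymbol{w}_h, \boldsymbol{z})$, i.e.\ it pairs the dual solution with the \emph{full} error; a single application of the orthogonality relation \eqref{eq:galerkin_orth_green} then replaces $\boldsymbol{z}$ by $\boldsymbol{z}-\boldsymbol{\pi}_h z$, and both resulting terms are immediately $O(h^2)$. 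You instead pair the dual solution with the \emph{discrete} error $\boldsymbol{\eta}_h$, which forces you to control the additional term $a_{\mathcal{D}}(\boldsymbol{u}-\boldsymbol{\pi}_h(Gw_h),\boldsymbol{\Pi}\boldsymbol{\psi})$; your resolution --- showing that $a_{\mathcal{D}}(\boldsymbol{u}-\boldsymbol{\pi}_h(Gw_h),\boldsymbol{\psi})=0$ by symmetry, elementwise integration by parts, single-valuedness of $\nabla\psi\cdot\mathbf{n}$ and of $\hat{\pi}_h(Gw_h)$, the Neumann condition, and the orthogonality $(e_h, Gw_h-\pi_h Gw_h)_\Omega=0$ --- is a valid superconvergence-type cancellation, and the remaining pieces are all correctly estimated at order $h^2$. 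The paper's route is shorter because the orthogonality of $\pi_h$ is spent once at the very start to convert the target quantity into a pairing with the full error; your route spends it at the end inside the cancellation identity. Both are sound; yours costs one extra lemma but makes explicit the consistency structure of $a_{\mathcal{D}}$ against $H^2$ functions with single-valued traces, which the paper leaves implicit.
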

	\begin{proof}
		By consistency and definition of $G(w_h)$, we obtain that 
		$$(w_h, v_h)_\Omega = - (\Delta G(w_h), v_h)_\Omega = 
		a_{\mathcal{D}} (\bm{G}w_h , \bm{v}_h), \quad  \forall \bm{v}_h \in \bm{S}_h, $$ where 
		$\bm{G}w_h = (Gw_h, Gw_h|_{\Gamma_h})$. With \eqref{eq:def_disc_green}, the following 
		orthogonality 
		relation easily follows: 
		\begin{equation}
			a_{\mathcal{D}}(\bm{G} w_h - \bm{G}_h \bm{w}_h, \bm{v}_h) = - j_0(\bm{w}_h, \bm{v}_h), 
			\quad 
			\forall \bm{v}_h \in \bm{M_h}. \label{eq:galerkin_orth_green}
		\end{equation} 
		Thus, $\bm{G}_h \bm{w}_h$ is a modified HDG elliptic projection of $\bm{G} w_h$.
		Hence, the rest of the proof naturally modifies standard convergence proofs for HDG applied to 
		elliptic problems. The details are 
		provided in \ref{appendix:proof_G_prop}.\qed
	\end{proof}
	\subsection{Discrete Agmon inequality} 
	Recall Agmon's inequality (see e.g. \cite[Lemma 4.10]{Constantin:book}):
	\begin{equation} \label{eq:cont_agmon}
		\norm{v}_{L^{\infty}(\Omega)} \lesssim \norm{v}_{H^1(\Omega)}^{1/2} 
		\norm{v}_{H^2(\Omega)}^{1/2}, \quad \forall v \in H^2(\Omega).
	\end{equation}
	We now establish the following discrete counter-part in the HDG setting following closely the 
	arguments in 
	\cite{Chave:2016, Kay:2009}.
	\begin{lemma}[Discrete Agmon Inequality] \label{lem:disc_agmon}
		Assume that $\Omega$ is convex. 
Then, the following inequality holds:
		\begin{equation}
			\norm{w_h}_{L^{\infty}(\Omega)} \lesssim 
			\norm{\boldsymbol{w}_h}_{1,h}^{1/2} \norm{\boldsymbol{\Delta}_h 
				\boldsymbol{w}_h}_{0,h}^{1/2}, 
			\quad \forall \boldsymbol{w}_h \in \boldsymbol{M}_h.
		\end{equation}
	\end{lemma}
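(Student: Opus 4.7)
Following the strategy employed in \cite{Chave:2016,Kay:2009}, the idea is to split $w_h$ into a piece that reduces to the \emph{continuous} Agmon inequality \eqref{eq:cont_agmon} applied to a $G$-preimage, plus a ``discrete residual'' controlled by the approximation properties of the discrete Green operator in \Cref{lem:green_estimates}. Since $\bm{w}_h\in\bm{M}_h$, \eqref{eq:G_h_inverse_laplace} gives $w_h = G_h(-\bm{\Delta}_h\bm{w}_h)$. Setting $f_h := -\Delta_h\bm{w}_h\in M_h\subset L_0^2(\Omega)$ and inserting $\pm\,\pi_h G(f_h)$ and $\pm\,G(f_h)$, I would decompose
\begin{equation*}
w_h = \bigl(G_h(-\bm{\Delta}_h\bm{w}_h) - \pi_h G(f_h)\bigr) + \bigl(\pi_h G(f_h) - G(f_h)\bigr) + G(f_h).
\end{equation*}

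\textbf{Main term.} For $G(f_h)$, the convexity of $\Omega$ ensures $G(f_h)\in H^2(\Omega)$; applying the continuous Agmon inequality \eqref{eq:cont_agmon}, then the regularity shifts \eqref{eq:regularity_shift} and \eqref{eq:ellipticity}, and finally the $H^{-1}$/$L^2$ bounds for the discrete Laplacian \eqref{eq:lapl_h-1_bnd}--\eqref{eq:lapl_l2_bnd} gives
\begin{equation*}
\|G(f_h)\|_{L^\infty(\Omega)} \lesssim \|G(f_h)\|_{H^1(\Omega)}^{1/2}\,\|G(f_h)\|_{H^2(\Omega)}^{1/2}
\lesssim \|\Delta_h\bm{w}_h\|_{V'}^{1/2}\,\|\Delta_h\bm{w}_h\|_{L^2(\Omega)}^{1/2}
\lesssim \|\bm{w}_h\|_{1,h}^{1/2}\,\|\bm{\Delta}_h\bm{w}_h\|_{0,h}^{1/2},
\end{equation*}
which is the desired bound.

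\textbf{Residuals.} For the first residual, both entries belong to $S_h$, so the inverse Lebesgue embedding \eqref{eq:lebesgue_embed} with $p=2,q=\infty$ and the $L^2$ estimate \eqref{eq:green_op_error_2} (applied to $-\bm{\Delta}_h\bm{w}_h\in\bm{M}_h$) yield
\begin{equation*}
\|G_h(-\bm{\Delta}_h\bm{w}_h) - \pi_h G(f_h)\|_{L^\infty(\Omega)} \lesssim h^{-d/2}\cdot h^2\,\|\bm{\Delta}_h\bm{w}_h\|_{0,h} = h^{2-d/2}\,\|\bm{\Delta}_h\bm{w}_h\|_{0,h}.
\end{equation*}
For the second residual, I would use the elementwise projection estimate obtained via Bramble--Hilbert and the Sobolev embedding $H^2(\hat E)\hookrightarrow L^\infty(\hat E)$ on the reference element (valid for $d\le 3$), followed by scaling: $\|v-\pi_h v\|_{L^\infty(E)}\lesssim h_E^{2-d/2}|v|_{H^2(E)}$. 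Taking $v=G(f_h)$ and using \eqref{eq:ellipticity} yields the same upper bound $\lesssim h^{2-d/2}\|\bm{\Delta}_h\bm{w}_h\|_{0,h}$.

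\textbf{Absorption.} To conclude, I would factor $\|\bm{\Delta}_h\bm{w}_h\|_{0,h}=\|\bm{\Delta}_h\bm{w}_h\|_{0,h}^{1/2}\|\bm{\Delta}_h\bm{w}_h\|_{0,h}^{1/2}$ and apply the inverse estimate \eqref{eq:lapl_l2_bnd} to one factor to obtain
\begin{equation*}
h^{2-d/2}\|\bm{\Delta}_h\bm{w}_h\|_{0,h} \lesssim h^{3/2-d/2}\,\|\bm{w}_h\|_{1,h}^{1/2}\,\|\bm{\Delta}_h\bm{w}_h\|_{0,h}^{1/2}.
\end{equation*}
Since $h^{3/2-d/2}$ is uniformly bounded for $d\in\{2,3\}$ and small $h$, both residuals are dominated by the Agmon term, proving the lemma. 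The most delicate point, and the essential reason for the dimensional restriction $d\le 3$, is the $L^\infty$ approximation bound for the $L^2$-projection of $H^2$ functions, which is precisely where the embedding $H^2\hookrightarrow L^\infty$ on the reference element is invoked.
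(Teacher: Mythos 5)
Your proof is correct and follows essentially the same route as the paper: the identity $w_h = -G_h\boldsymbol{\Delta}_h\boldsymbol{w}_h$ from \eqref{eq:G_h_inverse_laplace}, the continuous Agmon inequality combined with \eqref{eq:regularity_shift}, \eqref{eq:ellipticity} and \eqref{eq:lapl_h-1_bnd} for the main term, and the inverse Lebesgue embedding plus \Cref{lem:green_estimates} and \eqref{eq:lapl_l2_bnd} for the residual. The only deviation is that you keep $G(f_h)$ rather than $\pi_h G(f_h)$ as the main term, which forces you to prove an additional $L^\infty$ projection-error estimate via Bramble--Hilbert and scaling; the paper sidesteps this third term entirely by invoking the $L^\infty$-stability of $\pi_h$ \eqref{eq:lp_stab_proj}, so the step you single out as the most delicate is in fact avoidable.
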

	
	\begin{proof}
		Considering \eqref{eq:G_h_inverse_laplace} component-by-component, we observe that $w_h = 
		-G_h 
		\boldsymbol{\Delta}_h \boldsymbol{w}_h$. 
		This fact, combined with the triangle inequality, yields  (recall that $\boldsymbol{\Delta}_h 
		\boldsymbol{w}_h = (\Delta_h \boldsymbol{w}_h, \hat{\Delta}_h \boldsymbol{w}_h)$)
		\begin{equation}
			\begin{split}
				\norm{w_h}_{L^{\infty}(\Omega)} &= \norm{G_h \boldsymbol{\Delta}_h \boldsymbol{w}_h 
				}_{L^{\infty}(\Omega)} \\
				& \le \norm{\pi_h G(\Delta_h \boldsymbol{w}_h)}_{L^{\infty}(\Omega)} + \norm{G_h 
					\boldsymbol{\Delta}_h 
					\boldsymbol{w}_h -\pi_h G(\Delta_h \boldsymbol{w}_h)}_{L^{\infty}(\Omega)} = T_1 + T_2.
			\end{split} 
		\end{equation}
		To bound $T_1$, we note that the $L^{\infty}$-stability of the $L^2$-projection operator 
		\eqref{eq:lp_stab_proj} ensures that
		\begin{equation}
			\norm{\pi_h G(\Delta_h \boldsymbol{w}_h)}_{L^{\infty}(\Omega)}  \lesssim \norm{ G(\Delta_h 
				\boldsymbol{w}_h)}_{L^{\infty}(\Omega)},
		\end{equation}
		and thus the continuous Agmon inequality \eqref{eq:cont_agmon} yields (since $G(\Delta_h 
		\boldsymbol{w}_h)\in H^2(\Omega)$)
		\begin{equation}
			\norm{ G(\Delta_h \boldsymbol{w}_h)}_{L^{\infty}(\Omega)} 
			\lesssim \norm{ G(\Delta_h \boldsymbol{w}_h)}_{H^1(\Omega)}^{1/2} 
			\norm{G(\Delta_h \boldsymbol{w}_h)}_{H^2(\Omega)}^{1/2}.
		\end{equation}
		By \eqref{eq:regularity_shift}, \eqref{eq:ellipticity}, and 
		\eqref{eq:lapl_h-1_bnd}, we find
		\begin{equation}
			\begin{split}
				\norm{ G(\Delta_h \boldsymbol{w}_h)}_{L^{\infty}(\Omega)} &\lesssim \norm{  \Delta_h 
					\boldsymbol{w}_h}_{V'}^{1/2} \norm{\Delta_h 
					\boldsymbol{w}_h}_{L^2(\Omega)}^{1/2} 
				\lesssim \norm{  
					\boldsymbol{w}_h}_{1,h}^{1/2} \norm{\boldsymbol{\Delta}_h 
					\boldsymbol{w}_h}_{0,h}^{1/2}.
			\end{split}
		\end{equation}
		Next, we bound the term $T_2$. By the quasi-uniformity assumption and 
		\eqref{eq:lebesgue_embed}, we find that 
		\begin{equation}
			\begin{split}
				\norm{G_h \boldsymbol{\Delta}_h \boldsymbol{w}_h -\pi_h G(\Delta_h 
					\boldsymbol{w}_h)}_{L^{\infty}(\Omega)} 
				&\lesssim h^{-d/2} 	\norm{G_h \boldsymbol{\Delta}_h \boldsymbol{w}_h -\pi_h G(\Delta_h 
					\boldsymbol{w}_h)}_{L^{2}(\Omega)},
			\end{split}
		\end{equation}
		and thus by \Cref{lem:green_estimates}, \eqref{eq:ellipticity},  and \eqref{eq:lapl_l2_bnd},
		\begin{equation}
			\begin{split}
				\norm{G_h \boldsymbol{\Delta}_h \boldsymbol{w}_h -\pi_h G(\Delta_h 
					\boldsymbol{w}_h)}_{L^{\infty}(\Omega)} 
				& \lesssim 
				h^{(4-d)/2} 	\norm{\boldsymbol{\Delta}_h \boldsymbol{w}_h}_{0,h} \\
				& \lesssim h^{(3-d)/2} 	\del{h^{1/2}\norm{\boldsymbol{\Delta}_h 
						\boldsymbol{w}_h}_{0,h}^{1/2} }\norm{\boldsymbol{\Delta}_h 
					\boldsymbol{w}_h}_{0,h}^{1/2} \\ &\lesssim h^{(3-d)/2} \norm{
					\boldsymbol{w}_h}_{1,h}^{1/2} \norm{\boldsymbol{\Delta}_h 
					\boldsymbol{w}_h}_{0,h}^{1/2}.
			\end{split}
		\end{equation}
		We conclude by noting that $d \le 3$ and thus $h^{(3-d)/2} \lesssim 1$.
		\qed
	\end{proof}
	
	\begin{corollary}
		\label{cor:disc_agmon_Sh}
		Assume that $\Omega$ is convex. 
Then, the following inequality holds:
		\begin{equation}
			\norm{w_h - \overline{w_h}}_{L^{\infty}(\Omega)} \lesssim 
			\norm{\boldsymbol{w}_h}_{1,h}^{1/2} \norm{\boldsymbol{\Delta}_h 
				\boldsymbol{w}_h}_{0,h}^{1/2}, 
			\quad \forall \boldsymbol{w}_h=(w_h,\hat{w}_h) \in \boldsymbol{S}_h.
		\end{equation}
	\end{corollary}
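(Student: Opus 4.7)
The plan is to reduce to Lemma~\ref{lem:disc_agmon} by a simple mean-value shift. Given $\boldsymbol{w}_h = (w_h, \hat{w}_h)\in\boldsymbol{S}_h$, I would introduce
\[
\tilde{\boldsymbol{w}}_h = (w_h - \overline{w_h},\, \hat{w}_h - \overline{w_h}).
\]
By construction $w_h - \overline{w_h}$ has zero mean on $\Omega$, so $\tilde{\boldsymbol{w}}_h \in \boldsymbol{M}_h$, and the discrete Agmon inequality of Lemma~\ref{lem:disc_agmon} applies directly to $\tilde{\boldsymbol{w}}_h$, giving
\[
\|w_h - \overline{w_h}\|_{L^\infty(\Omega)} \lesssim \|\tilde{\boldsymbol{w}}_h\|_{1,h}^{1/2}\,\|\boldsymbol{\Delta}_h \tilde{\boldsymbol{w}}_h\|_{0,h}^{1/2}.
\]

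Next, I would check that the right-hand side coincides with the claimed bound. For the $\|\cdot\|_{1,h}$ factor, observe that adding a constant to both components leaves $\nabla(w_h - \overline{w_h}) = \nabla w_h$ and $(w_h - \overline{w_h}) - (\hat{w}_h - \overline{w_h}) = w_h - \hat{w}_h$ unchanged, so by definition \eqref{eq:product_space_norm_h1} one has $\|\tilde{\boldsymbol{w}}_h\|_{1,h} = \|\boldsymbol{w}_h\|_{1,h}$.

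For the discrete Laplacian factor, I would verify that $\boldsymbol{\Delta}_h \tilde{\boldsymbol{w}}_h = \boldsymbol{\Delta}_h \boldsymbol{w}_h$. Indeed, let $\boldsymbol{c} = (\overline{w_h}, \overline{w_h})$. For any $\boldsymbol{v}_h = (v_h, \hat{v}_h) \in \boldsymbol{M}_h$, the gradient of a constant vanishes and $\overline{w_h} - \overline{w_h} = 0$ on each facet, so $a_{\mathcal{D}}(\boldsymbol{c}, \boldsymbol{v}_h) = 0$. Hence, by \eqref{eq:discrete_laplacian},
\[
-(\boldsymbol{\Delta}_h \tilde{\boldsymbol{w}}_h, \boldsymbol{v}_h)_{0,h} = a_{\mathcal{D}}(\boldsymbol{w}_h - \boldsymbol{c}, \boldsymbol{v}_h) = a_{\mathcal{D}}(\boldsymbol{w}_h, \boldsymbol{v}_h) = -(\boldsymbol{\Delta}_h \boldsymbol{w}_h, \boldsymbol{v}_h)_{0,h},
\]
for every $\boldsymbol{v}_h \in \boldsymbol{M}_h$. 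Since $(\cdot,\cdot)_{0,h}$ is an inner product on $\boldsymbol{S}_h$ and both $\boldsymbol{\Delta}_h \tilde{\boldsymbol{w}}_h, \boldsymbol{\Delta}_h \boldsymbol{w}_h \in \boldsymbol{M}_h$, this identifies them.

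There is no real obstacle here beyond the elementary verifications that $\|\cdot\|_{1,h}$ and $\boldsymbol{\Delta}_h$ are invariant under translation by the constant $\boldsymbol{c}$; once both invariances are in place, substituting into the previous display yields the corollary.
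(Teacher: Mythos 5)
Your proposal is correct and follows essentially the same route as the paper: subtract the constant pair $(\overline{w_h},\overline{w_h})$ to land in $\boldsymbol{M}_h$, apply Lemma~\ref{lem:disc_agmon}, and check that both factors on the right-hand side are unaffected by the shift (the paper phrases this as $\norm{\overline{\boldsymbol{w}}_h}_{1,h}=0$ and $\boldsymbol{\Delta}_h\overline{\boldsymbol{w}}_h=\mathbf{0}$ plus linearity, which is the same verification you carry out directly). No gaps.
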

	\begin{proof}
		Let $\boldsymbol{w}_h \in \boldsymbol{S}_h$ and set $\overline{\boldsymbol{w}}_h = 
		(\overline{w}_h, \overline{w}_h|_{\Gamma_h}) \in 
		\boldsymbol{S}_h$. Then, $\boldsymbol{w}_h - \overline{\boldsymbol{w}}_h \in 
		\boldsymbol{M}_h$, 
		and thus by \Cref{lem:disc_agmon},
		\begin{equation}
			\norm{w_h - \overline{w}_h}_{L^{\infty}(\Omega)} \lesssim 
			\norm{\boldsymbol{w}_h - \overline{\boldsymbol{w}}_h}_{1,h}^{1/2} 
			\norm{\boldsymbol{\Delta}_h \del{
					\boldsymbol{w}_h - \boldsymbol{\overline{w}}_h}}_{0,h}^{1/2}.
		\end{equation}
		Observe that $\norm{\overline{\boldsymbol{w}}_h}_{1,h} = 0$, and
		\begin{equation} \label{eq:disc_lapl_const_zero}
			(\boldsymbol{\Delta}_h \overline{\boldsymbol{w}}_h, \boldsymbol{v}_h)_{0,h} = 
			-a_{\mathcal{D}}(\overline{\boldsymbol{w}}_h, \boldsymbol{v}_h) = 0, \quad \forall 
			\boldsymbol{v}_h 
			\in 
			\boldsymbol{M}_h,
		\end{equation}
		and thus testing \eqref{eq:disc_lapl_const_zero} with $\boldsymbol{v}_h = \boldsymbol{\Delta}_h 
		\overline{\boldsymbol{w}}_h$,
		\begin{equation}
			\norm{\boldsymbol{\Delta}_h \overline{\boldsymbol{w}}_h}_{0,h}^2 = 0.
		\end{equation}
		The result follows.
		\qed
	\end{proof}
	\subsection{Discrete Gagliardo--Nirenberg inequality }
	We recall the Gagliardo--Nirenberg inequality for bounded domains (\cite{Gagliardo1959}, 
	Theorem 1.2 in \cite{LiZhang2022}).
	for $2 \le p \le 
	p^\star$ with $p^\star$ defined as in \Cref{lem:broken_poincare2}, it holds that 
	\begin{equation} \label{eq:cont_gagl_niren}
		\norm{\nabla v}_{L^{p}(\Omega)} \lesssim \norm{v}_{L^2(\Omega)}^{1-\alpha} 
		\Vert v \Vert_{H^2(\Omega)}^{\alpha}  \lesssim \Vert v\Vert_{H^1(\Omega)}^{1-\alpha}  
		\Vert v\Vert_{H^2(\Omega)}^{\alpha}, \quad \forall v \in H^2(\Omega),
	\end{equation}
	where
	\begin{equation} \label{eq:alpha}
		\alpha = \frac{1}{2} + \frac{d}{2} \del{\frac{1}{2} - \frac{1}{p}}.
	\end{equation}

	We show the following discrete counter-part.	 
	
	
	\begin{lemma}[Discrete Gagliardo--Nirenberg inequality] \label{lemma:discrete_GN}
		Suppose that $2 \le p \le p^{\star}$, with $p^\star$ defined in \Cref{lem:broken_poincare2} and let 
		$\alpha$ be
		defined by \eqref{eq:alpha}. Then, it 
		holds 
		that
		\begin{equation} \label{eq:disc_gagl_niren}
			\norm{\nabla_h w_h}_{L^p(\Omega)} \lesssim \norm{\boldsymbol{w}_h}_{1,h}^{1-\alpha} 
			\norm{\boldsymbol{\Delta}_h \boldsymbol{w}_h}_{0,h}^{\alpha}, \quad \forall \boldsymbol{w}_h 
			= (w_h, \hat{w}_h) 
			\in \boldsymbol{M}_h.
		\end{equation}
		As a consequence, we have
		\begin{equation} \label{eq:disc_gagl_niren2}
			\norm{\nabla_h w_h}_{L^p(\Omega)} \lesssim \norm{\boldsymbol{w}_h}_{1,h}^{1-\alpha} 
			\norm{\boldsymbol{\Delta}_h \boldsymbol{w}_h}_{0,h}^{\alpha}, \quad \forall \boldsymbol{w}_h  
			= (w_h,\hat{w}_h)
			\in \boldsymbol{S}_h.
		\end{equation}
	\end{lemma}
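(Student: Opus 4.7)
The plan is to mimic the structure used in the proof of the discrete Agmon inequality (\Cref{lem:disc_agmon}), combining the continuous Gagliardo--Nirenberg inequality applied to a Green-operator lift with an HDG-type error splitting. Since $\boldsymbol{w}_h \in \boldsymbol{M}_h$, identity \eqref{eq:G_h_inverse_laplace} gives $w_h = -G_h \boldsymbol{\Delta}_h \boldsymbol{w}_h$, and the triangle inequality then yields the split
\begin{equation*}
\|\nabla_h w_h\|_{L^p(\Omega)} \le \underbrace{\|\nabla_h \pi_h G(\Delta_h \boldsymbol{w}_h)\|_{L^p(\Omega)}}_{T_1} + \underbrace{\|\nabla_h(G_h \boldsymbol{\Delta}_h \boldsymbol{w}_h - \pi_h G(\Delta_h \boldsymbol{w}_h))\|_{L^p(\Omega)}}_{T_2}.
\end{equation*}

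For $T_1$, I would first exploit the elementwise $W^{1,p}$-stability \eqref{eq:w1p_stab_proj} of $\pi_h$ to peel off the projection, then apply the continuous Gagliardo--Nirenberg bound \eqref{eq:cont_gagl_niren} to $G(\Delta_h \boldsymbol{w}_h)\in H^2(\Omega)$. The resulting $H^1(\Omega)$-norm is controlled by \eqref{eq:regularity_shift} combined with \eqref{eq:lapl_h-1_bnd} to give $\|\boldsymbol{w}_h\|_{1,h}$, while the $H^2(\Omega)$-norm is controlled using convex-domain elliptic regularity \eqref{eq:ellipticity} together with the trivial bound $\|\Delta_h \boldsymbol{w}_h\|_{L^2(\Omega)} \le \|\boldsymbol{\Delta}_h \boldsymbol{w}_h\|_{0,h}$. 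This produces exactly the target estimate for $T_1$.

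For $T_2$, I would apply the $L^2\to L^p$ inverse inequality \eqref{eq:lebesgue_embed} on the elementwise polynomial difference and then use the Green-operator error bound \eqref{eq:green_op_error_1} (applicable because $\boldsymbol{\Delta}_h \boldsymbol{w}_h \in \boldsymbol{M}_h$) to obtain
\begin{equation*}
T_2 \lesssim h^{d/p - d/2}\, \|\boldsymbol{G}_h \boldsymbol{\Delta}_h \boldsymbol{w}_h - \boldsymbol{\pi}_h(G\Delta_h \boldsymbol{w}_h)\|_{1,h} \lesssim h^{1 + d/p - d/2}\, \|\boldsymbol{\Delta}_h \boldsymbol{w}_h\|_{0,h}.
\end{equation*}
To recast this into the asserted interpolation form, I would write $\|\boldsymbol{\Delta}_h \boldsymbol{w}_h\|_{0,h} = \|\boldsymbol{\Delta}_h \boldsymbol{w}_h\|_{0,h}^{1-\alpha} \|\boldsymbol{\Delta}_h \boldsymbol{w}_h\|_{0,h}^{\alpha}$ and apply the inverse bound \eqref{eq:lapl_l2_bnd} to the first factor, leaving a residual $h$-power equal to $\alpha + d/p - d/2 = \tfrac{1}{2} - \tfrac{d}{4} + \tfrac{d}{2p}$; the constraint $p \le p^\star$ makes this exponent nonnegative, so the factor is $\lesssim 1$.

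The consequence \eqref{eq:disc_gagl_niren2} on $\boldsymbol{S}_h$ follows from the mean-value device used in \Cref{cor:disc_agmon_Sh}: writing $\boldsymbol{w}_h - \overline{\boldsymbol{w}}_h \in \boldsymbol{M}_h$, one has $\nabla_h w_h = \nabla_h(w_h - \overline{w}_h)$, $\|\boldsymbol{w}_h - \overline{\boldsymbol{w}}_h\|_{1,h} = \|\boldsymbol{w}_h\|_{1,h}$, and by \eqref{eq:disc_lapl_const_zero}, $\boldsymbol{\Delta}_h(\boldsymbol{w}_h - \overline{\boldsymbol{w}}_h) = \boldsymbol{\Delta}_h \boldsymbol{w}_h$; applying \eqref{eq:disc_gagl_niren} closes the argument. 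I expect the only delicate step to be the $h$-power bookkeeping in $T_2$: the non-negativity of $\alpha + d/p - d/2$ relies precisely on the Sobolev threshold $p \le p^\star$, which is the same threshold that makes the continuous Gagliardo--Nirenberg inequality used for $T_1$ meaningful.
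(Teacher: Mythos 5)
Your proposal is correct and follows essentially the same route as the paper: the identity $w_h=-G_h\boldsymbol{\Delta}_h\boldsymbol{w}_h$, the same two-term splitting, the same treatment of $T_1$ via $W^{1,p}$-stability, the continuous Gagliardo--Nirenberg inequality, \eqref{eq:regularity_shift}, \eqref{eq:ellipticity} and \eqref{eq:lapl_h-1_bnd}, the same treatment of $T_2$ via \eqref{eq:lebesgue_embed}, \Cref{lem:green_estimates} and \eqref{eq:lapl_l2_bnd}, the identical exponent bookkeeping, and the same mean-value reduction for the $\boldsymbol{S}_h$ case.
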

	\begin{proof}
		Fix $\bm{w}_h \in \bm{M}_h$. To simplify notation, denote by $\bm{z}_h = (z_h, \hat{z}_h) = 
		\bm{\Delta}_h \bm{w}_h$. Recall from \eqref{eq:G_h_inverse_laplace} that $w_h = -G_h 
		\boldsymbol{z}_h$.  By the triangle inequality, we have that 
		\begin{equation}\label{eq:int11}
			\norm{\nabla_h w_h}_{L^p(\Omega)} 
			\leq \norm{\nabla_h \pi_h G z_h }_{L^p(\Omega)} + 
			\norm{\nabla_h \del{G_h \boldsymbol{z}_h - \pi_h G z_h }}_{L^p(\Omega)}. 
		\end{equation}
		To bound the first term in the right-hand side of \eqref{eq:int11}, we use the $W^{1,p}$-stability 
		of the $L^2$-projection \eqref{eq:w1p_stab_proj}, 
		the continuous 
		Gagliardo--Nirenberg inequality \eqref{eq:cont_gagl_niren}, 
		\eqref{eq:regularity_shift}, \eqref{eq:ellipticity}, and \eqref{eq:lapl_h-1_bnd}:
		\begin{multline}
			\|\nabla_h \pi_h G z_h\|_{L^p(\Omega)} \lesssim \Vert \nabla G z_h\Vert_{L^{p}(\Omega)} 
			\lesssim \Vert G z_h \Vert^{1-\alpha}_{H^1(\Omega)} \Vert G z_h\Vert^\alpha_{H^2(\Omega)} \\ 
			\lesssim \|z_h\|_{V'}^{1-\alpha} \|z_h\|^{\alpha}_{L^2(\Omega)}  \lesssim  
			\norm{\boldsymbol{w}_h}_{1,h}^{1-\alpha} 
			\norm{\boldsymbol{\Delta}_h \boldsymbol{w}_h}_{0,h}^{\alpha}.
		\end{multline}
		To bound the second term in the right-hand side of \eqref{eq:int11},  we use 
		\eqref{eq:lebesgue_embed}, the 
		definition of the norm $\norm{\cdot}_{1,h}$, and \Cref{lem:green_estimates} since $\bm{z}_h \in 
		\bm{M}_h$:
		\begin{multline}
			\norm{\nabla_h \del{G_h \bm{z}_h  - \pi_h G z_h }}_{L^p(\Omega)} \lesssim h^{d(\frac{1}{p} - 
				\frac{1}{2})} 	
			\norm{\nabla_h 
				\del{G_h \bm{z}_h  - \pi_h G z_h }}_{L^2(\Omega)} \\   \lesssim h^{d(\frac{1}{p} - 
				\frac{1}{2})} 	\norm{\boldsymbol{G}_h 
				\bm{z}_h  - \bm{\pi}_h G z_h  }_{1,h} 
			\lesssim h^{d(\frac{1}{p} - \frac{1}{2}) + 1} \norm{\bm{z}_h}_{0,h}  = h^{d(\frac{1}{p} - 
				\frac{1}{2}) + 1} \norm{\boldsymbol{\Delta}_h  
				\boldsymbol{w}_h}_{0,h}.
		\end{multline}
		Finally, by \eqref{eq:lapl_l2_bnd}, we have
		\begin{multline}
			\norm{\nabla_h \del{G_h \bm{z}_h  - \pi_h G z_h }}_{L^p(\Omega)} \\
			\lesssim h^{d(\frac{1}{p} - \frac{1}{2}) + \alpha} \del{ h^{1-\alpha} \norm{ 
					\boldsymbol{\Delta}_h \boldsymbol{w}_h}_{0,h}^{1-\alpha} }  \norm{ 
				\boldsymbol{\Delta}_h 
				\boldsymbol{w}_h}_{0,h}^{\alpha} 
			\lesssim h^{d(\frac{1}{p} - \frac{1}{2}) + \alpha} \norm{ 
				\boldsymbol{w}_h}_{1,h}^{1-\alpha}   
			\norm{ \boldsymbol{\Delta}_h 
				\boldsymbol{w}_h}_{0,h}^{\alpha}.
		\end{multline}
		We conclude by noting that $h^{d(\frac{1}{p} - \frac{1}{2}) + \alpha} \lesssim 1$ since for $d \le 3$ 
		and $2 \le p \le p^\star$ we have  
		\begin{equation}
			d\del{\frac{1}{p} - \frac{1}{2}} + \alpha = d\del{\frac{1}{p} - \frac{1}{2}} + \frac{1}{2} + \frac{d}{2} 
			\del{\frac{1}{2} - \frac{1}{p}} =  \frac{1}{2} - \frac{d}{2} \del{\frac{1}{2} - \frac{1}{p}} \ge 0.
		\end{equation}
		As in the proof of Corollary~\ref{cor:disc_agmon_Sh}, we fix 
		$\boldsymbol{w}_h\in\boldsymbol{S}_h$ and
		consider $\overline{\boldsymbol{w}}_h = (\overline{w}_h,\overline{w}_h|_{\Gamma_h})$. Then 
		$\boldsymbol{w}_h-\overline{\boldsymbol{w}}_h
		\in \boldsymbol{M}_h$ and we can apply \eqref{eq:disc_gagl_niren}.  We then conclude by noting 
		that $\Vert \overline{\boldsymbol{w}}_h\Vert_{1,h}=0$
		and $\boldsymbol{\Delta}_h \overline{\boldsymbol{w}}_h = \mathbf{0}$.
		\qed
	\end{proof}
	
	\section{The numerical method}
	Let $0 = t_0 < t_1 < \dots < t_{N} = T$ be a partition of the time interval $(0,T)$ into $N$ 
	subintervals of equal length $\tau$.  We discretize 
	\eqref{eq:CH_strong} in time using the first order 
	implicit Euler method, and we employ the Eyre convex-concave splitting scheme \cite{eyre1998} for the nonlinear
 chemical energy density. 
	We treat the convex part implicitly and the concave part explicitly. The 
	fully 
	discrete numerical scheme reads: for any $1 \le n \le N$, given $c_h^{n-1} \in S_h$, find 
	$(\boldsymbol{c}_h^n, \boldsymbol{\mu}_h^n) \in \boldsymbol{S}_h \times \boldsymbol{S}_h$ 
	satisfying 
	\begin{subequations} \label{eq:CH_disc}
		\begin{align}
			(\delta_{\tau} c_h^n, \chi_h)_{\Omega} + a_{\mathcal{D}}(\boldsymbol{\mu}_h^n, 
			\boldsymbol{\chi}_h) &= 0, && \forall \boldsymbol{\chi}_h \in \boldsymbol{S}_h,
			\label{eq:CH_disc_a}\\
			(\Phi_+'(c_h^n) + \Phi_-'(c_h^{n-1}), \phi_h)_{\Omega} + 
			\kappa 
			a_{\mathcal{D}}(\boldsymbol{c}_h^n, \boldsymbol{\phi}_h) &= (\mu_h^n, 
			\phi_h)_{\Omega}, && 
			\forall \boldsymbol{\phi}_h \in \boldsymbol{S}_h. \label{eq:CH_disc_b}
		\end{align}
	\end{subequations}
	Here, $\delta_\tau$ denotes the backward difference operator:
	\begin{equation}
		\delta_\tau c_h^n = \frac{c_h^n - c_h^{n-1}}{\tau}.
	\end{equation}
	We define $\boldsymbol{c}_h^0 \in \boldsymbol{S}_h$ as the solution to the variational 
	problem: given $c_0 \in H^2(\Omega)\cap L_0^2(\Omega)$, set $\boldsymbol{c}_0 = (c_0, 
	c_0|_{\Gamma_h})$  and
	find $\boldsymbol{c}_h^0 \in 
	\boldsymbol{S}_h$ such that 
	\begin{equation} 
		a_{\mathcal{D}}( \boldsymbol{c}_h^0 - \boldsymbol{c}_0, \boldsymbol{v}_h) =0, \quad \forall 
		\boldsymbol{v}_h \in 
		\boldsymbol{S}_h,\,\,\, \mathrm{with  \,\,  constraint} \,\, (c_h^0 - c_0,1)_{\Omega} = 0.  
		\label{eq:init_proj_ch0}
	\end{equation}
	To see that this variational problem is well-posed, first observe that any solution is unique. Indeed, if 
	$\boldsymbol{w}_1, \boldsymbol{w}_2 \in \boldsymbol{S}_h$ solve \eqref{eq:init_proj_ch0}, then 
	$\boldsymbol{w}_1 - \boldsymbol{w}_2 \in \boldsymbol{M}_h$ and
	\begin{equation}\label{eq:proj_wellposed_intermediate}
		a_{\mathcal{D}}(\boldsymbol{w}_1 - \boldsymbol{w}_2, \boldsymbol{v}_h) = 0, \quad \forall 
		\boldsymbol{v}_h \in \boldsymbol{M}_h.
	\end{equation}
	 By testing \eqref{eq:proj_wellposed_intermediate} with $\boldsymbol{v}_h = \boldsymbol{w}_1 - 
	 \boldsymbol{w}_2$ and using the coercivity of $a_{\mathcal{D}}(\cdot,\cdot)$ 
	 \eqref{eq:aD_coercive}, we conclude that $\boldsymbol{w}_1 = \boldsymbol{w}_2$. To show 
	 existence, it is enough to realize that if $\boldsymbol{w}_h \in \boldsymbol{M}_h$ is the unique 
	 solution of the auxiliary problem
	 \begin{equation*}
	 	a_{\mathcal{D}}(\boldsymbol{w}_h,\boldsymbol{v}_h) = 
	 	a_{\mathcal{D}}(\boldsymbol{c}_0,\boldsymbol{v}_h), \quad \forall \boldsymbol{v}_h \in 
	 	\boldsymbol{M}_h,
	 \end{equation*}
	 the existence of which is guaranteed by the Lax--Milgram theorem, then $\boldsymbol{w}_h + 
	 (\overline{c_0},\overline{c_0})$ solves \eqref{eq:init_proj_ch0}, since for all $\boldsymbol{v}_h \in 
	 \boldsymbol{S}_h$,
	 \begin{equation}
	 	a_{\mathcal{D}}(\boldsymbol{w}_h + (\overline{c_0},\overline{c_0}),\boldsymbol{v}_h) = 
	 	a_{\mathcal{D}}(\boldsymbol{w}_h ,\boldsymbol{v}_h - (\overline{v_h}, \overline{v_h})) = 
	 	a_{\mathcal{D}}(c_0 ,\boldsymbol{v}_h - (\overline{v_h}, \overline{v_h})) =  a_{\mathcal{D}}(c_0 
	 	,\boldsymbol{v}_h).
	 \end{equation}
	 In the above, we used that for any $\bm{\theta} \in H^2(\mathcal{E}_h) \times L^2(\Gamma_h)$ and any constant $\alpha\in\mathbb{R}$, we have that 
	 \begin{equation} 
		a_\mathcal{D} (\bm{\theta} , (\alpha,\alpha)) =a_\mathcal{D} ((\alpha,\alpha),\bm{\theta} ) 
		=  0. \label{eq:ad_0_constant}
	\end{equation} 
	%

	\begin{proposition}[Global mass conservation] \label{prop:mass_conservation}
		The HDG scheme \eqref{eq:CH_disc} satisfies the discrete global mass conservation
		\begin{equation}
			(c_h^n,1)_\Omega = (c_h^0,1)_\Omega = (c_0,1)_\Omega = (c(t_n),1)_\Omega, \quad \forall 1 
			\le n \le N.
		\end{equation}
	\end{proposition}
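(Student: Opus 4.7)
The plan is to prove this by induction on the time step $n$, exploiting the fact that $a_{\mathcal{D}}$ vanishes whenever one of its arguments is a constant pair, as recorded in \eqref{eq:ad_0_constant}. Specifically, I will test the mass balance equation \eqref{eq:CH_disc_a} of the HDG scheme with the constant test function $\boldsymbol{\chi}_h = (1,1) \in \boldsymbol{S}_h$. By \eqref{eq:ad_0_constant}, we have $a_{\mathcal{D}}(\boldsymbol{\mu}_h^n,(1,1)) = 0$, so that
\begin{equation*}
(\delta_\tau c_h^n, 1)_\Omega = 0,
\end{equation*}
which upon multiplication by $\tau$ yields $(c_h^n,1)_\Omega = (c_h^{n-1},1)_\Omega$ for each $n \ge 1$.

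Iterating this identity over $n$ gives $(c_h^n,1)_\Omega = (c_h^0,1)_\Omega$ for all $1 \le n \le N$. The identity $(c_h^0,1)_\Omega = (c_0,1)_\Omega$ is then a direct consequence of the mass constraint $(c_h^0 - c_0, 1)_\Omega = 0$ imposed in the definition of the initial projection \eqref{eq:init_proj_ch0}.

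Finally, the equality $(c_0,1)_\Omega = (c(t_n),1)_\Omega$ follows from the mass-conservative nature of the continuous problem: integrating \eqref{eq:CH_strong_a} over $\Omega$, applying the divergence theorem, and using the homogeneous Neumann condition \eqref{eq:CH_strong_d} yields $\tfrac{d}{dt}(c(t),1)_\Omega = 0$, which combined with the initial condition \eqref{eq:CH_strong_e} gives the claim. There is no real obstacle here; the only point worth emphasizing is that one must explicitly invoke \eqref{eq:ad_0_constant}, which is the reason the constant test function annihilates the diffusive form even in the presence of the facet unknowns characteristic of HDG.
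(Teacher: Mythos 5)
Your proposal is correct and follows essentially the same route as the paper's proof: testing \eqref{eq:CH_disc_a} with the constant pair $(1,1)$, invoking \eqref{eq:ad_0_constant}, using the mass constraint in \eqref{eq:init_proj_ch0}, and integrating \eqref{eq:CH_strong_a} with the Neumann condition \eqref{eq:CH_strong_d} for the continuous identity. No issues.
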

	\begin{proof}
		Choose $\boldsymbol{\chi}_h = (1,1)$ in \eqref{eq:CH_disc_a} and use \eqref{eq:ad_0_constant} to 
		conclude that   
		$(\delta_\tau c_h^n , 1  ) = 0$ for any $n$, which yields the first equality. The second equality 
		holds from \eqref{eq:init_proj_ch0}. The third equality is a property of problem 
		\eqref{eq:CH_strong}, which follows from integrating 
		\eqref{eq:CH_strong_a} over $\Omega$, using the divergence theorem, and boundary condition 
		\eqref{eq:CH_strong_d}.  
		\qed
	\end{proof}	
	The remaining part of this section focuses on  showing that a unique solution exists for 
	\eqref{eq:CH_disc}.
	\begin{theorem}[Unconditional unique solvability]  \label{thm:solvability} 
		Given $c_h^{n-1} \in S_h$,  there exists a unique solution $(\boldsymbol{c}_h^n,  
		\boldsymbol{\mu}_h^n) \in \boldsymbol{S}_h \times \boldsymbol{S}_h$ to \eqref{eq:CH_disc} for 
		any $\tau, h > 0$. 
	\end{theorem}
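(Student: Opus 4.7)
The strategy is to use the mass conservation and mean-value constraints to reduce the coupled system to a single nonlinear problem for $\boldsymbol{c}_h^n$, then apply Brouwer's theorem together with a Minty--Browder strict-monotonicity argument in the finite-dimensional space $\boldsymbol{M}_h$.

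First I would exploit the decomposition $\boldsymbol{S}_h = \boldsymbol{M}_h \oplus \mathbb{R}(1,1)$, which together with \eqref{eq:ad_0_constant} shows that the scheme is equivalent to its restriction to test functions in $\boldsymbol{M}_h$ plus two scalar identities obtained by testing with $(1,1)$. Testing \eqref{eq:CH_disc_a} with $(1,1)$ gives mass conservation $(c_h^n,1)_\Omega = (c_h^{n-1},1)_\Omega$ (Proposition~\ref{prop:mass_conservation}), and testing \eqref{eq:CH_disc_b} with $(1,1)$ gives $(\mu_h^n,1)_\Omega = (\Phi'_+(c_h^n)+\Phi'_-(c_h^{n-1}),1)_\Omega$, which fixes $\overline{\mu_h^n}$ once $c_h^n$ is known. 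Accordingly I write $\boldsymbol{c}_h^n = \boldsymbol{u}_h + (\overline{c_0},\overline{c_0})$ and $\boldsymbol{\mu}_h^n = \tilde{\boldsymbol{\mu}}_h + (\overline{\mu_h^n},\overline{\mu_h^n})$ with $\boldsymbol{u}_h,\tilde{\boldsymbol{\mu}}_h \in \boldsymbol{M}_h$.

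Next, for any fixed $\boldsymbol{u}_h \in \boldsymbol{M}_h$, define $\tilde{\boldsymbol{\mu}}_h[\boldsymbol{u}_h] \in \boldsymbol{M}_h$ as the unique solution (by Lax--Milgram using coercivity of $a_{\mathcal{D}}$ on $\boldsymbol{M}_h$) of
\[
a_{\mathcal{D}}(\tilde{\boldsymbol{\mu}}_h,\boldsymbol{\chi}_h) = -\tau^{-1}(u_h - u_h^{n-1},\chi_h)_\Omega,\quad \forall\boldsymbol{\chi}_h \in \boldsymbol{M}_h,
\]
where $u_h^{n-1} = c_h^{n-1}-\overline{c_0}$ has zero mean by mass conservation at the previous step. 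This map is affine in $\boldsymbol{u}_h$. Substituting into \eqref{eq:CH_disc_b} reduces the problem to: find $\boldsymbol{u}_h \in \boldsymbol{M}_h$ with $\langle F(\boldsymbol{u}_h),\boldsymbol{\phi}_h\rangle = L(\boldsymbol{\phi}_h)$ for all $\boldsymbol{\phi}_h \in \boldsymbol{M}_h$, where
\[
\langle F(\boldsymbol{u}_h),\boldsymbol{\phi}_h\rangle = \kappa\, a_{\mathcal{D}}(\boldsymbol{u}_h,\boldsymbol{\phi}_h) + (\Phi'_+(u_h+\overline{c_0}),\phi_h)_\Omega - (\tilde{\mu}_h[\boldsymbol{u}_h],\phi_h)_\Omega,\quad L(\boldsymbol{\phi}_h) = -(\Phi'_-(c_h^{n-1}),\phi_h)_\Omega.
\]

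Then I would verify that $F$ is strictly monotone and coercive. For monotonicity, setting $\delta\boldsymbol{u}=\boldsymbol{u}_h^1-\boldsymbol{u}_h^2$ and $\delta\tilde{\boldsymbol{\mu}}=\tilde{\boldsymbol{\mu}}_h[\boldsymbol{u}_h^1]-\tilde{\boldsymbol{\mu}}_h[\boldsymbol{u}_h^2]$, testing the defining equation for $\delta\tilde{\boldsymbol{\mu}}$ with itself yields $-(\delta\tilde{\mu},\delta u)_\Omega = \tau\, a_{\mathcal{D}}(\delta\tilde{\boldsymbol{\mu}},\delta\tilde{\boldsymbol{\mu}}) \ge 0$; combined with pointwise monotonicity of $\Phi'_+$ (convexity of $\Phi_+$) and $\kappa\, a_{\mathcal{D}}(\delta\boldsymbol{u},\delta\boldsymbol{u}) \ge \kappa C_\mathrm{coer}\|\delta\boldsymbol{u}\|_{1,h}^2$, which is strictly positive on $\boldsymbol{M}_h\setminus\{0\}$, this gives strict monotonicity. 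For coercivity, monotonicity of $\Phi'_+$ combined with $(u_h,1)_\Omega=0$ gives $(\Phi'_+(u_h+\overline{c_0}),u_h)_\Omega \ge (\Phi'_+(\overline{c_0}),u_h)_\Omega = 0$; writing $\tilde{\boldsymbol{\mu}}_h[\boldsymbol{u}_h] = M\boldsymbol{u}_h + M_0$, the same trick shows the linear-in-$\boldsymbol{u}_h$ part of $-(\tilde{\mu}_h[\boldsymbol{u}_h],u_h)_\Omega$ is nonnegative, while the $M_0$-part and $L$ are controlled by Cauchy--Schwarz and the Poincar\'e inequality \eqref{eq:poincare_zeroavg_vector}. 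Altogether $\langle F(\boldsymbol{u}_h)-L,\boldsymbol{u}_h\rangle \ge \kappa C_\mathrm{coer}\|\boldsymbol{u}_h\|_{1,h}^2 - C\|\boldsymbol{u}_h\|_{1,h}$, so Brouwer's theorem on a sufficiently large ball in $\boldsymbol{M}_h$ yields existence, and strict monotonicity yields uniqueness. Finally $\boldsymbol{\mu}_h^n$ is uniquely recovered from $\tilde{\boldsymbol{\mu}}_h[\boldsymbol{u}_h]$ and $\overline{\mu_h^n}$.

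The main obstacle is the degeneracy of $a_{\mathcal{D}}$ on the constant direction $(1,1)$, which forces one to work on the mean-zero subspace $\boldsymbol{M}_h$ and to express $\tilde{\boldsymbol{\mu}}_h$ implicitly through a discrete Green-type operator; once this elimination is carried out, the Eyre convex/concave splitting of $\Phi$ automatically supplies the monotone structure needed for both existence via Brouwer and uniqueness via Minty--Browder.
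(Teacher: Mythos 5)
Your proposal is correct, but it inverts the paper's elimination strategy, and the comparison is worth recording. The paper (Lemmas~\ref{lemma:equivalence}--\ref{lemma:uniq_solv_p2}) first freezes the chemical potential $w_h^n$ and solves the \emph{nonlinear} equation \eqref{eq:CH_disc_zero_mean_b} for $\boldsymbol{y}_h^n$ via the Brouwer corollary; this defines a nonlinear solution map $w_h \mapsto \boldsymbol{y}_h^n(w_h)$, which is then composed into an operator $\boldsymbol{\mathcal{G}}$ on $\boldsymbol{M}_h$ whose coercivity, boundedness, hemicontinuity, and strict monotonicity must all be verified before Minty--Browder applies; the hemicontinuity step in particular requires a stability estimate for $w_h \mapsto \boldsymbol{y}_h^n(w_h)$. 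You instead eliminate the \emph{linear} variable: for each $\boldsymbol{u}_h$ you solve the Lax--Milgram problem for $\tilde{\boldsymbol{\mu}}_h[\boldsymbol{u}_h]$, obtaining an affine (hence trivially continuous) dependence, and are left with a single nonlinear equation for $\boldsymbol{u}_h$ whose strict monotonicity follows directly from the convexity of $\Phi_+$, the coercivity of $a_{\mathcal{D}}$ on $\boldsymbol{M}_h$, and the sign identity $-(\delta\tilde{\mu},\delta u)_\Omega = \tau\, a_{\mathcal{D}}(\delta\tilde{\boldsymbol{\mu}},\delta\tilde{\boldsymbol{\mu}}) \ge 0$. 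This buys a shorter argument: one fixed-point application (Brouwer for existence, strict monotonicity for uniqueness) rather than two nested ones, and no hemicontinuity proof, since continuity of your $F$ on the finite-dimensional space $\boldsymbol{M}_h$ is immediate from $\Phi \in \mathcal{C}^2$. All the essential ingredients are shared with the paper: the translation invariance \eqref{eq:ad_0_constant}, mass conservation to fix $\overline{c_h^n}$, the Taylor-expansion trick $(\Phi_+'(u_h+\overline{c_0}),u_h)_\Omega \ge 0$ exploiting $(u_h,1)_\Omega = 0$, and the discrete Poincar\'{e} inequality for the lower-order terms. Two points you should make explicit in a full write-up: (i) the recovery of the facet-and-element pair $\boldsymbol{\mu}_h^n$ from $\tilde{\boldsymbol{\mu}}_h[\boldsymbol{u}_h]$ and the scalar $\overline{\mu_h^n} = \overline{\Phi_+'(c_h^n)+\Phi_-'(c_h^{n-1})}$, together with the converse direction showing any solution of \eqref{eq:CH_disc} yields a solution of your reduced problem (this is the analogue of the paper's Lemma~\ref{lemma:equivalence} and is what transfers uniqueness back to the original system); and (ii) as in the paper's Lemma~\ref{lem:uniq_solv_p1}, the assumption that $c_h^{n-1}$ has mean $\overline{c_0}$, which holds by induction via Proposition~\ref{prop:mass_conservation} but is not literally implied by the hypothesis ``$c_h^{n-1} \in S_h$'' alone (if it fails, replace $\overline{c_0}$ by $\overline{c_h^{n-1}}$ throughout).
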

	\begin{proof}
		The proof is divided into two steps. We first show that problem \eqref{eq:CH_disc} is equivalent 
		to problem \eqref{eq:CH_disc_zero_mean} posed in $\bm{M}_h$, see Lemma 
		\ref{lemma:equivalence}. Then, we employ Brouwer's fixed point theorem and the Minty-Browder 
		theorem to show existence of solutions to \eqref{eq:CH_disc_zero_mean} in Lemma 
		\ref{lem:uniq_solv_p1} and Lemma \ref{lemma:uniq_solv_p2} respectively. The result then 
		immediately follows.\qed	
\end{proof}
	\subsection{An equivalent problem} 
	Following the strategy in \cite{liu2020priori},  we consider an equivalent problem to 
	\eqref{eq:CH_disc}: for any $1 \le n \le N$, find $(\boldsymbol{y}_h^n, 
	\boldsymbol{w}_h^n) \in \boldsymbol{M}_h \times \boldsymbol{S}_h$ \begin{subequations} 
		\label{eq:CH_disc_zero_mean}
		\begin{align}
			(\delta_\tau y_h^n, \mathring{\chi}_h)_{\Omega} + 
			a_{\mathcal{D}}(\boldsymbol{w}_h^n,
			\mathring{\boldsymbol{\chi}}_h) &= 0, && \forall \mathring{\boldsymbol{\chi}}_h \in 
			\boldsymbol{M}_h,
			\label{eq:CH_disc_zero_mean_a} \\
			(\Phi_+'(y_h^n + \overline{c_0}) + \Phi_-'(y_h^{n-1} + 
			\overline{c_0}), 
			\mathring{\phi}_h)_{\Omega} + \kappa 
			a_{\mathcal{D}}(\boldsymbol{y}_h^n,  \mathring{\boldsymbol{\phi}}_h) &= (w_h^n, 
			\mathring{\phi}_h)_{\Omega}, 
			&& 
			\forall \mathring{\boldsymbol{\phi}}_h \in \boldsymbol{M}_h,
			\label{eq:CH_disc_zero_mean_b}
		\end{align}
		where $y_h^{n-1} = c_h^{n-1} - \overline{c_0}$. 
	\end{subequations}

	\begin{lemma}\label{lemma:equivalence}
		The unique solvability of the problem defined by \eqref{eq:CH_disc} is equivalent to the unique 
		solvability of the problem defined by
		\eqref{eq:CH_disc_zero_mean}.
	\end{lemma}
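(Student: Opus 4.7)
The plan is to exhibit a bijection (up to a scalar ambiguity in the discrete chemical potential) between solutions of \eqref{eq:CH_disc} and solutions of \eqref{eq:CH_disc_zero_mean} via the shifts $\boldsymbol{c}_h^n = \boldsymbol{y}_h^n + (\overline{c_0}, \overline{c_0})$ and $\boldsymbol{\mu}_h^n = \boldsymbol{w}_h^n + (\lambda, \lambda)$ for a specific $\lambda \in \mathbb{R}$.

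First I would verify the forward direction: given $(\boldsymbol{c}_h^n, \boldsymbol{\mu}_h^n) \in \boldsymbol{S}_h \times \boldsymbol{S}_h$ solving \eqref{eq:CH_disc}, set $\boldsymbol{y}_h^n := \boldsymbol{c}_h^n - (\overline{c_0}, \overline{c_0})$ and $\boldsymbol{w}_h^n := \boldsymbol{\mu}_h^n$. By the global mass conservation of Proposition~\ref{prop:mass_conservation}, $\overline{c_h^n} = \overline{c_0}$, hence $\boldsymbol{y}_h^n \in \boldsymbol{M}_h$. Since $\overline{c_0}$ is time-independent, $\delta_\tau c_h^n = \delta_\tau y_h^n$, and since $a_{\mathcal{D}}$ annihilates constant arguments by \eqref{eq:ad_0_constant}, $a_{\mathcal{D}}(\boldsymbol{c}_h^n, \cdot) = a_{\mathcal{D}}(\boldsymbol{y}_h^n, \cdot)$. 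Moreover, $\Phi_\pm'(c_h^\bullet) = \Phi_\pm'(y_h^\bullet + \overline{c_0})$ pointwise. Restricting the test functions in \eqref{eq:CH_disc_a}--\eqref{eq:CH_disc_b} from $\boldsymbol{S}_h$ to $\boldsymbol{M}_h \subset \boldsymbol{S}_h$ then directly yields \eqref{eq:CH_disc_zero_mean_a}--\eqref{eq:CH_disc_zero_mean_b}.

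Conversely, given $(\boldsymbol{y}_h^n, \boldsymbol{w}_h^n) \in \boldsymbol{M}_h \times \boldsymbol{S}_h$ solving \eqref{eq:CH_disc_zero_mean}, I would define $\boldsymbol{c}_h^n := \boldsymbol{y}_h^n + (\overline{c_0}, \overline{c_0})$ and then set
\[
\lambda := \frac{1}{|\Omega|} \int_\Omega \bigl(\Phi_+'(c_h^n) + \Phi_-'(c_h^{n-1})\bigr)\,dx - \overline{w_h^n}, \qquad \boldsymbol{\mu}_h^n := \boldsymbol{w}_h^n + (\lambda, \lambda).
\]
For any $\boldsymbol{\phi}_h \in \boldsymbol{S}_h$ I would use the decomposition $\boldsymbol{\phi}_h = \mathring{\boldsymbol{\phi}}_h + (\overline{\phi_h}, \overline{\phi_h})$ with $\mathring{\boldsymbol{\phi}}_h \in \boldsymbol{M}_h$ and argue by linearity: the zero-mean component reduces the identities to \eqref{eq:CH_disc_zero_mean_a}--\eqref{eq:CH_disc_zero_mean_b} (here one uses $(w_h^n + \lambda, \mathring{\phi}_h)_\Omega = (w_h^n, \mathring{\phi}_h)_\Omega$ since $\mathring{\phi}_h$ has zero mean), while the constant component vanishes either by \eqref{eq:ad_0_constant} together with $(\delta_\tau c_h^n, 1)_\Omega = 0$ (which follows from $\overline{y_h^n} = \overline{y_h^{n-1}} = 0$, induction, and \eqref{eq:init_proj_ch0}) for the first equation, or by the defining identity for $\lambda$ for the second.

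The bijection then transfers unique solvability in both directions. The one delicate point—and the main obstacle—is that $\boldsymbol{w}_h^n$ in \eqref{eq:CH_disc_zero_mean} is only determined modulo an additive constant $(\alpha, \alpha)$: test functions drawn from $\boldsymbol{M}_h$, combined with the annihilation property \eqref{eq:ad_0_constant}, cannot detect such a shift. However, this ambiguity is benign for the correspondence: replacing $\boldsymbol{w}_h^n$ by $\boldsymbol{w}_h^n + (\alpha, \alpha)$ shifts $\overline{w_h^n}$ by $\alpha$ and $\lambda$ by $-\alpha$, leaving $\boldsymbol{\mu}_h^n$ invariant. On the other side, $\boldsymbol{\mu}_h^n$ in \eqref{eq:CH_disc} is fully pinned down by testing \eqref{eq:CH_disc_b} with $(1, 1) \in \boldsymbol{S}_h$, which fixes $\overline{\mu_h^n}$ to precisely the value encoded in $\lambda$. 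With these observations, uniqueness for either problem implies uniqueness for the other, completing the proof.
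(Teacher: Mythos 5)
Your proposal is correct and follows essentially the same route as the paper: both directions are handled by the explicit constant shifts $\boldsymbol{c}_h^n = \boldsymbol{y}_h^n + (\overline{c_0},\overline{c_0})$ and $\boldsymbol{\mu}_h^n = \boldsymbol{w}_h^n + (\lambda,\lambda)$ (the paper takes $\lambda = \overline{\Phi_+'(c_h^n)+\Phi_-'(c_h^{n-1})}$ after having normalized $\overline{w_h^n}=0$, which is your $\lambda$ in that case), verified via the zero-mean/constant decomposition of test functions and the annihilation property \eqref{eq:ad_0_constant}. The one point you treat more explicitly than the paper is the additive-constant ambiguity in $\boldsymbol{w}_h^n$; the paper sidesteps it by working with $\boldsymbol{w}_h^n \in \boldsymbol{M}_h$ in the converse direction (and its uniqueness proof indeed produces $\boldsymbol{w}_h^n \in \boldsymbol{M}_h$), whereas you absorb the ambiguity into $\lambda$ — both resolutions are fine.
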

	\begin{proof}
		Assume that $(\bm{c}_h^n, \bm{\mu}_h^n) \in \bm{S}_h \times \bm{S}_h$ is the unique solution 
		to \eqref{eq:CH_disc}. We now exhibit a  solution to \eqref{eq:CH_disc_zero_mean}. In particular, 
		we show that $\bm{y}_h^n = (c_h^n - \overline{c_0}, \hat{c}_h^n - \overline{c_0})$ and $\bm{w}_h^n = 
		(\mu_h^n - \overline{\mu_h^n}, \hat{\mu}_h^n - \overline{\mu_h^n})$  solve 
		\eqref{eq:CH_disc_zero_mean}.  Indeed, $\delta_\tau y_h^n = \delta_\tau c_h^n$ and  
		$a_\mathcal{D}(\bm{w}_h^n, \mathring{\bm{\chi}_h})=a_\mathcal{D}(\bm{\mu}_h^n, 
		\mathring{\bm{\chi}_h})$ from \eqref{eq:ad_0_constant}. Hence, it follows that 
		\eqref{eq:CH_disc_zero_mean_a} holds. Noticing that  $(\mu_h^n - \overline{\mu_h^n}, 
		\mathring{\phi}_h)_{\Omega} = (\mu_h^n   , \mathring{\phi}_h)_{\Omega}$, it also follows that 
		\eqref{eq:CH_disc_zero_mean_b} holds. 
		
		Conversely, assume that $(\boldsymbol{y}_h^n, \boldsymbol{w}_h^n) \in \boldsymbol{M}_h 
		\times \boldsymbol{M}_h$ solves \eqref{eq:CH_disc_zero_mean}. Let $f(y_h^n) = \Phi_+'(y_h^n 
		+ \overline{c_0}) + \Phi_-'(y_h^{n-1} + \overline{c}_0)$. We show that $\bm{c}_h^n = (y_h^n + 
		\overline{c_0}, \hat{y}_h^n + \overline{c_0})$ and $\bm{\mu}_h^n  = (w_h^n + \overline{f(y_h^n)},   \hat{ 
			w}_h^n + \overline{f(y_h^n)})$ solve \eqref{eq:CH_disc}. To this end, let $\bm{\chi}_h \in 
		\bm{S}_h$ and take $\mathring{\bm{\chi}}_h = (\chi_h - \overline{\chi_h}, \hat{\chi}_h - \overline{\chi_h}) 
		\in \bm{M}_h$ in \eqref{eq:CH_disc_zero_mean_a}. Since $\delta_\tau y_h^n \in M_h$, we have 
		that $(\delta_\tau y_h^n, \chi_h - \overline{\chi_h}) = (\delta_\tau y_h^n, \chi_h) = (\delta_\tau 
		c_h^n, \chi_h)$. In addition, from \eqref{eq:ad_0_constant}, $a_\mathcal{D}(\bm{w}_h^n, 
		\mathring{\bm{\chi}}_h) =  a_\mathcal{D}(\bm{w}_h^n, \bm{\chi}_h) = 
		a_\mathcal{D}(\bm{\mu}_h^n, \bm{\chi}_h)$.  Thus, \eqref{eq:CH_disc_a} holds. To show that 
		\eqref{eq:CH_disc_b} holds, let $\bm{\phi}_h \in \bm{S}_h$ and take $\mathring{\bm{\phi}}_h = 
		(\phi_h - \overline{\phi_h}, \hat{\phi}_h - \overline{\phi_h}) \in \bm{M}_h$ in 
		\eqref{eq:CH_disc_zero_mean_b}. Observe that from \eqref{eq:ad_0_constant},  $a_\mathcal{D} 
		(\bm{y}_h^n ,\mathring{\bm{\phi}}_h) = a_\mathcal{D} (\bm{c}_h^n ,\mathring{\bm{\phi}}_h) =   
		a_\mathcal{D} (\bm{c}_h^n ,\bm{\phi}_h) $. 
		Further, since $w_h^n \in M_h$, we obtain 
		\begin{align*} 
			(f(y_h^n) , \mathring{\phi}_h)_{\Omega} - (w_h^n, \mathring{\phi}_h)_{\Omega} &= (f(y_h^n), 
			\phi_h - \overline{\phi_h})_{\Omega} - (\mu_h^n - \overline{f(y_h^n)} , \phi_h - 
			\overline{\phi_h})_{\Omega} \\&  =   (f(y_h^n) - \mu_h^n, \phi_h)_{\Omega} + (w_h^n , 
			\overline{\phi_h})_{\Omega} = (f(y_h^n) - \mu_h^n, \phi_h)_{\Omega}. 
		\end{align*} 
		This shows that \eqref{eq:CH_disc_zero_mean} is satisfied. Since we have shown how to obtain a 
		solution to \eqref{eq:CH_disc} from \eqref{eq:CH_disc_zero_mean} and vice-versa, the unique 
		solvability of each problem follows from the unique solvability of the other problem via a standard 
		proof by contradiction.  \qed
	\end{proof}
	\subsection{Existence and uniqueness of the discrete solution to 
		\eqref{eq:CH_disc_zero_mean}} 
	To prove the existence and uniqueness of the discrete solution, we will rely on the following Lemma 
	\ref{lemma:brouwer} and Lemma \ref{lemma:minty_browder} 
	\cite{ciarlet2013linear}.
	\begin{lemma}[Corollary to Brouwer's fixed point theorem]\label{lemma:brouwer}
		Let $(V,(\cdot,\cdot)_V)$ be a finite-dimensional Hilbert space and let $f:V \to V$ be a 
		continuous 
		mapping with the following property: there exists an $M>0$ such that
		\begin{equation*}
			(f(v),v)_V \ge 0 \text{ for all } v \in V \text{ such that } \norm{v}_V = M.
		\end{equation*}
		Then, there exists $v_0 \in V$ such that
		\begin{equation}
			\norm{v_0}_V \le M \text{ and } f(v_0) = 0.
		\end{equation}
	\end{lemma}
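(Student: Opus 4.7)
The plan is to argue by contradiction, constructing an explicit continuous self-map on the closed ball of radius $M$ and invoking Brouwer's classical fixed point theorem to produce a contradiction with the coercivity-type hypothesis on $f$.

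To set this up, I would first assume, toward a contradiction, that $f(v)\ne 0$ for every $v$ in the closed ball $B_M = \{v\in V:\|v\|_V\le M\}$. Under this assumption, the map
\begin{equation*}
g:B_M\to V,\qquad g(v) = -\frac{M}{\|f(v)\|_V}\,f(v),
\end{equation*}
is well defined and continuous, since $f$ is continuous and does not vanish on the (closed, bounded) ball. By construction, $\|g(v)\|_V = M$ for every $v\in B_M$, so in fact $g$ maps $B_M$ continuously into itself. Because $V$ is finite dimensional, $B_M$ is homeomorphic to the closed unit ball of $\mathbb{R}^{\dim V}$, and so Brouwer's fixed point theorem applies: there exists $v_\star\in B_M$ with $g(v_\star) = v_\star$.

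From $g(v_\star) = v_\star$ and the fact that $\|g(v)\|_V = M$ for every $v$, we read off $\|v_\star\|_V = M$, so $v_\star$ lies on the boundary sphere where the hypothesis is available. Taking the inner product of the fixed-point identity with $v_\star$ yields
\begin{equation*}
\|v_\star\|_V^2 = (g(v_\star),v_\star)_V = -\frac{M}{\|f(v_\star)\|_V}\,(f(v_\star),v_\star)_V,
\end{equation*}
which rearranges to $(f(v_\star),v_\star)_V = -\|f(v_\star)\|_V\,\|v_\star\|_V < 0$, since by assumption $f(v_\star)\ne 0$. This directly contradicts the hypothesis $(f(v),v)_V\ge 0$ for all $v$ with $\|v\|_V=M$. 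Hence our standing assumption must fail, and there exists $v_0\in B_M$ with $f(v_0)=0$, which is the desired conclusion.

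The only subtlety I anticipate is making sure $g$ is genuinely a continuous self-map of $B_M$ (continuity of the normalization relies on $f$ being nonvanishing on all of $B_M$, which is precisely the contradiction hypothesis) and noting that Brouwer's theorem is applicable because $V$ is finite dimensional, so that $B_M$ is compact and convex. Everything else is a short algebraic manipulation; the ``hard part,'' such as it is, is recognizing that normalizing $f$ to the sphere produces a fixed point on the sphere and that evaluating the inner-product hypothesis there yields the sign contradiction.
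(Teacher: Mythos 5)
Your proof is correct, and it is the standard argument for this result. Note that the paper itself does not prove this lemma at all: it is stated as a known auxiliary result and attributed to the literature (the citation to Ciarlet's book), so there is no in-paper proof to compare against. Your contradiction argument --- assuming $f$ is nonvanishing on the closed ball, normalizing $-f$ to the sphere of radius $M$ to obtain a continuous self-map $g$ of the compact convex set $B_M$, extracting a Brouwer fixed point $v_\star$ which necessarily satisfies $\norm{v_\star}_V = M$, and then computing $(f(v_\star),v_\star)_V = -M\norm{f(v_\star)}_V < 0$ to contradict the sign hypothesis on the sphere --- is exactly the classical proof one finds in the cited references. All the steps check out: continuity of $g$ does rely precisely on the contradiction hypothesis that $f$ has no zero in $B_M$, and finite-dimensionality is what makes $B_M$ compact so that Brouwer's theorem applies. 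No gaps.
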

	
	\begin{lemma}[Minty--Browder theorem] \label{lemma:minty_browder}
		Let $(V,\norm{\cdot}_V)$ be a separable and reflexive Banach space and let $A: V \to V'$ be 
		a 
		coercive and hemicontinuous monotone operator. Then, given any $f \in V'$, there exists a $u 
		\in 
		V$ such that
		\begin{equation*}
			A(u) = f.
		\end{equation*}
		Moreover, if $A$ is strictly monotone, then the solution $u$ is unique.
	\end{lemma}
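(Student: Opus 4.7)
The plan is to prove this via Galerkin approximation combined with Minty's trick, which is the classical strategy for existence results involving monotone operators on infinite-dimensional spaces. Since $V$ is separable, I would first fix an increasing sequence of finite-dimensional subspaces $V_1 \subset V_2 \subset \dots \subset V$ whose union is dense in $V$. For each $n$, I endow $V_n$ with an inner product (any one will do, since all norms on $V_n$ are equivalent), which identifies $V_n$ with its dual, and I define a map $F_n: V_n \to V_n$ implicitly by requiring $(F_n(u),v)_{V_n} = \langle A(u) - f, v\rangle$ for all $v \in V_n$. Hemicontinuity of $A$, together with finite-dimensionality, implies continuity of $F_n$. Coercivity of $A$ yields $(F_n(u),u)_{V_n} = \langle A(u),u\rangle - \langle f, u\rangle \ge 0$ whenever $\|u\|_V = M$ for $M$ sufficiently large, so \Cref{lemma:brouwer} produces $u_n \in V_n$ with $F_n(u_n) = 0$, i.e.\ $\langle A(u_n), v\rangle = \langle f, v\rangle$ for all $v \in V_n$.

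Testing with $v = u_n$ and using coercivity again yields a uniform bound $\|u_n\|_V \le M$. By reflexivity of $V$, I would then extract a subsequence (still indexed by $n$) with $u_n \rightharpoonup u$ in $V$ for some $u \in V$. The boundedness of $\{A(u_n)\}$ in $V'$ (implicit from the construction together with density arguments, or derivable from the equation itself) allows me to pass to the limit in the linear functional $v \mapsto \langle A(u_n),v\rangle$ tested against any fixed $w \in \bigcup_n V_n$, obtaining $\langle f, w\rangle$; by density this extends to all $w \in V$ in the sense of testing against suitable approximants.

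The key step, and the main obstacle, is identifying the weak limit of $A(u_n)$ with $A(u)$, since $A$ is merely hemicontinuous and not continuous under weak convergence. This is where Minty's trick enters. For any $w \in V$, monotonicity gives
\begin{equation*}
    \langle A(w) - A(u_n), w - u_n\rangle \ge 0.
\end{equation*}
Choosing $w_n \in V_n$ with $w_n \to w$ strongly, using $\langle A(u_n), w_n\rangle = \langle f, w_n\rangle$, the lower semicontinuity of the inner-product pairing under weak convergence, and passing to the limit yields
\begin{equation*}
    \langle A(w), w - u\rangle \ge \langle f, w - u\rangle, \quad \forall w \in V.
\end{equation*}
Now I would invoke the standard Minty argument: set $w = u + t\varphi$ for arbitrary $\varphi \in V$ and $t > 0$, divide by $t$, and let $t \downarrow 0$. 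Hemicontinuity of $A$ along the ray $t \mapsto A(u + t\varphi)$ gives $\langle A(u), \varphi\rangle \ge \langle f, \varphi\rangle$, and replacing $\varphi$ by $-\varphi$ yields equality, so $A(u) = f$ in $V'$.

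For uniqueness under strict monotonicity, if $A(u_1) = A(u_2) = f$, then $\langle A(u_1) - A(u_2), u_1 - u_2\rangle = 0$, which under strict monotonicity forces $u_1 = u_2$. I would expect the main technical subtlety not to be the abstract scheme but the careful bookkeeping in extending the Galerkin identity to arbitrary test vectors in $V$ via density; once this is handled, Minty's trick closes the argument cleanly.
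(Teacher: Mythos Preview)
The paper does not provide its own proof of this lemma; it is quoted as a classical result from \cite{ciarlet2013linear} and invoked as a tool in the solvability argument for the discrete scheme. Your Galerkin-plus-Minty strategy is the standard textbook proof and is essentially correct.

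One small technical point worth tightening: you invoke boundedness of $\{A(u_n)\}$ in $V'$ somewhat loosely. A cleaner route avoids this altogether. First fix $w \in V_m$ for some $m$; then for all $n \ge m$ both $w$ and $u_n$ lie in $V_n$, so the Galerkin relation gives $\langle A(u_n), w - u_n\rangle = \langle f, w - u_n\rangle$, and monotonicity yields $\langle A(w), w - u_n\rangle \ge \langle f, w - u_n\rangle$. Passing to the limit along $u_n \rightharpoonup u$ gives $\langle A(w) - f, w - u\rangle \ge 0$ for every $w \in \bigcup_m V_m$. To extend to arbitrary $w \in V$, use that a monotone hemicontinuous operator is demicontinuous: taking $w_k \to w$ strongly with $w_k \in \bigcup_m V_m$, one has $A(w_k) \rightharpoonup A(w)$ in $V'$, and since $w_k - u \to w - u$ strongly the pairing passes to the limit. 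From there your Minty argument with $w = u + t\varphi$ and $t \downarrow 0$ concludes exactly as you describe.
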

	We now employ Lemma \ref{lemma:brouwer} to show that given $w_h^n \in M_h$, there exists a 
	unique $\bm{y}_h^n \in \boldsymbol{M}_h$ solving \eqref{eq:CH_disc_zero_mean_b}.
	\begin{lemma} \label{lem:uniq_solv_p1}
		Given $w_h^{n}, y_h^{n-1} \in M_h$, there exists a unique $\boldsymbol{y}_h^n \in 
		\boldsymbol{M}_h$ such that
		\begin{equation}	\label{eq:brouwer_eq}
			(\Phi_+'(y_h^n + \overline{c_0}) + 
			\Phi_-'(y_h^{n-1} + \overline{c_0}), \mathring{\phi}_h)_{\Omega} + 
			\kappa 
			a_{\mathcal{D}}(\boldsymbol{y}_h^n, \mathring{\boldsymbol{\phi}}_h) = (w_h^n, 
			\mathring{\phi}_h)_{\Omega}, \quad \forall \mathring{\boldsymbol{\phi}}_h\in 
			\boldsymbol{M}_h.
		\end{equation}
	\end{lemma}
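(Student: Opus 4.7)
The plan is to apply Brouwer's fixed point theorem (Lemma \ref{lemma:brouwer}) on the finite-dimensional Hilbert space $(\boldsymbol{M}_h, (\cdot,\cdot)_{1,h})$. First, I would define a mapping $F:\boldsymbol{M}_h \to \boldsymbol{M}_h$ via Riesz representation: for each $\boldsymbol{y}_h \in \boldsymbol{M}_h$, let $F(\boldsymbol{y}_h)$ be the unique element of $\boldsymbol{M}_h$ satisfying
\[
(F(\boldsymbol{y}_h),\mathring{\boldsymbol{\phi}}_h)_{1,h} = (\Phi_+'(y_h+\overline{c_0})+\Phi_-'(y_h^{n-1}+\overline{c_0})-w_h^n,\mathring{\phi}_h)_\Omega + \kappa\, a_{\mathcal{D}}(\boldsymbol{y}_h,\mathring{\boldsymbol{\phi}}_h),
\]
for all $\mathring{\boldsymbol{\phi}}_h\in\boldsymbol{M}_h$. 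Zeros of $F$ are in bijection with solutions of \eqref{eq:brouwer_eq}. Continuity of $F$ follows from the continuity of $\Phi_+'$ (since $\Phi\in\mathcal{C}^2$), the continuity of $a_{\mathcal{D}}$ (\eqref{eq:aD_continuous}), and the equivalence of norms on finite-dimensional spaces.

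The main step, and the crux of the argument, is to establish the coercivity condition in Lemma \ref{lemma:brouwer}. Testing the definition of $F$ with $\mathring{\boldsymbol{\phi}}_h=\boldsymbol{y}_h$ yields
\[
(F(\boldsymbol{y}_h),\boldsymbol{y}_h)_{1,h} = (\Phi_+'(y_h+\overline{c_0}),y_h)_\Omega + (\Phi_-'(y_h^{n-1}+\overline{c_0})-w_h^n,y_h)_\Omega + \kappa\, a_{\mathcal{D}}(\boldsymbol{y}_h,\boldsymbol{y}_h).
\]
The key observation is that convexity of $\Phi_+$ gives monotonicity of $\Phi_+'$, so pointwise $(\Phi_+'(y_h+\overline{c_0})-\Phi_+'(\overline{c_0}))\,y_h \ge 0$. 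Integrating and using that $y_h\in M_h\subset L_0^2(\Omega)$ eliminates the constant piece, giving $(\Phi_+'(y_h+\overline{c_0}),y_h)_\Omega \ge 0$. The remaining linear terms are controlled via Cauchy--Schwarz and the Poincaré-type bound \eqref{eq:poincare_zeroavg_vector} by $C\,\|\boldsymbol{y}_h\|_{1,h}$, with $C$ depending on the data $w_h^n$ and $\Phi_-'(y_h^{n-1}+\overline{c_0})$ but not on $\boldsymbol{y}_h$. Combined with \eqref{eq:aD_coercive}, this yields
\[
(F(\boldsymbol{y}_h),\boldsymbol{y}_h)_{1,h} \ge \kappa\, C_{\mathrm{coer}}\,\|\boldsymbol{y}_h\|_{1,h}^2 - C\,\|\boldsymbol{y}_h\|_{1,h},
\]
which is nonnegative as soon as $\|\boldsymbol{y}_h\|_{1,h}=M$ with $M$ sufficiently large. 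Lemma \ref{lemma:brouwer} then produces a solution.

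For uniqueness, suppose $\boldsymbol{y}_{h,1}$ and $\boldsymbol{y}_{h,2}$ in $\boldsymbol{M}_h$ both satisfy \eqref{eq:brouwer_eq}. Subtracting and testing with $\mathring{\boldsymbol{\phi}}_h = \boldsymbol{y}_{h,1}-\boldsymbol{y}_{h,2}\in\boldsymbol{M}_h$ gives
\[
0 = \bigl(\Phi_+'(y_{h,1}+\overline{c_0})-\Phi_+'(y_{h,2}+\overline{c_0}),\,y_{h,1}-y_{h,2}\bigr)_\Omega + \kappa\, a_{\mathcal{D}}(\boldsymbol{y}_{h,1}-\boldsymbol{y}_{h,2},\boldsymbol{y}_{h,1}-\boldsymbol{y}_{h,2}).
\]
Monotonicity of $\Phi_+'$ makes the first term nonnegative, while \eqref{eq:aD_coercive} bounds the second below by $\kappa\,C_{\mathrm{coer}}\,\|\boldsymbol{y}_{h,1}-\boldsymbol{y}_{h,2}\|_{1,h}^2$, forcing $\boldsymbol{y}_{h,1}=\boldsymbol{y}_{h,2}$.

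The main obstacle is the nonlinear term $(\Phi_+'(y_h+\overline{c_0}),y_h)_\Omega$: without extra assumptions on $\Phi_+$ (such as polynomial growth bounds), a direct estimate would introduce dependence on $\|y_h\|$ that is hard to dominate by the quadratic coercive term. The trick that saves the argument is to replace such an estimate with the pointwise monotonicity inequality against the constant $\overline{c_0}$, which requires precisely that $y_h$ have zero mean. This is the structural reason the equivalent problem is posed on $\boldsymbol{M}_h$ rather than on $\boldsymbol{S}_h$.
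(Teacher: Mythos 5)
Your proposal is correct and follows essentially the same route as the paper: define the map via Riesz representation on $(\boldsymbol{M}_h,(\cdot,\cdot)_{1,h})$, obtain $(\Phi_+'(y_h+\overline{c_0}),y_h)_\Omega\ge 0$ from convexity of $\Phi_+$ together with the zero-mean property of $y_h$ (the paper phrases this via a Taylor expansion about $\overline{c_0}$ rather than pointwise monotonicity, which is the same inequality), control the linear terms by Cauchy--Schwarz and the discrete Poincar\'{e} inequality, and invoke the Brouwer corollary on a sufficiently large sphere. The uniqueness argument via monotonicity of $\Phi_+'$ and coercivity of $a_{\mathcal{D}}$ is identical to the paper's.
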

	
	\begin{proof}
		For fixed $\boldsymbol{v}_h \in \boldsymbol{M}_h$, we define a mapping 
		$\boldsymbol{\mathcal{F}}: \boldsymbol{M}_h \to \boldsymbol{M}_h $ as follows: for all 
		$\mathring{\boldsymbol{\phi}_h} \in \boldsymbol{M}_h$, 
		\begin{equation} \label{eq:operator_F}
			(\boldsymbol{\mathcal{F}}(\boldsymbol{v}_h), \mathring{\boldsymbol{\phi}}_h)_{1,h} 
			= 	
			(\Phi_+'(v_h + \overline{c_0}) + 
			\Phi_-'(y_h^{n-1} + \overline{c_0}), \mathring{\phi}_h)_{\Omega} + 
			\kappa 
			a_{\mathcal{D}}(\boldsymbol{v}_h, \mathring{\boldsymbol{\phi}}_h) - (w_h^n, 
			\mathring{\phi}_h)_{\Omega}.
		\end{equation}
		%
		
		The mapping $\boldsymbol{\mathcal{F}}$ is well-defined thanks to the Riesz representation theorem.
		Choosing $\mathring{\boldsymbol{\phi}}_h= \boldsymbol{v}_h$ in \eqref{eq:operator_F} and 
		using the coercivity of the bilinear form $a_{\mathcal{D}}$ \eqref{eq:aD_coercive}, we find
		\begin{equation}
			\del{\boldsymbol{\mathcal{F}}(\boldsymbol{v}_h), \boldsymbol{v}_h}_{1,h}  \ge 
			(\Phi_+'(v_h + \overline{c_0}) + 
			\Phi_-'(y_h^{n-1} + \overline{c_0}), v_h)_{\Omega} + C_\mathrm{coer}\kappa 
			\norm{\boldsymbol{v}_h}_{1,h}^2  - (w^n_h, v_h)_{\Omega}.
		\end{equation}
		Expanding $\Phi_+'(v_h + \overline{c_0})$ around $\overline{c_0}$, we find 
		there exists $\xi_h$ between $\overline{c_0}$ and $v_h + \overline{c_0}$ such that
		\begin{equation}
			\Phi_+'(v_h + \overline{c_0})  = 	\Phi_+'(\overline{c_0})  + 
			v_h \Phi_+''(\xi_h),
		\end{equation}
		and thus by the convexity of $\Phi_+$ we have
		\begin{equation} \label{eq:convex_potential_positivity}
			(\Phi_+'(v_h  + \overline{c_0}) , v_h)_{\Omega}  = 	
			(\Phi_+'(\overline{c_0})  
			+ 
			\Phi_+''(\xi_h)v_h, v_h)_{\Omega} \ge 
			(\Phi_+'(\overline{c_0})  , v_h)_{\Omega} = 0.
		\end{equation}
		Next, the Cauchy-Schwarz's inequality, discrete Poincar\'{e}'s inequality 
		\eqref{eq:poincare}, and Young's inequality yield
		\begin{equation}
			( \Phi_-'(y_h^{n-1} + \overline{c_0}) - w_h^n, v_h)_{\Omega} \ge 
			-\frac{\epsilon}{2}  
			\norm[1]{\Phi_-'(y_h^{n-1} + 
				\overline{c_0}) - w_h^n}_{L^2(\Omega)}^2 - \frac{C_P^2}{2\epsilon} 
			\norm{\boldsymbol{v}_h}_{1,h}^2, \label{eq:bounding_ph_-}
		\end{equation}
		so that, upon choosing $\epsilon = C_P^2/(C_\mathrm{coer}\kappa)$, we find
		\begin{equation}
			\del{\boldsymbol{\mathcal{F}}(\boldsymbol{v}_h), \boldsymbol{v}_h}_{1,h}  \ge   
			\frac{C_\mathrm{coer}\kappa}{2} 
			\norm{\boldsymbol{v}_h}_{1,h}^2  - \frac{C_P^2}{2C_\mathrm{coer}\kappa}  
			\norm[1]{\Phi_-'(y_h^{n-1} + 
				\overline{c_0}) - w_h^n}_{L^2(\Omega)}^2.
		\end{equation}
		Define the sphere $\Xi$ in $\boldsymbol{M}_h$ as follows:
		\begin{equation}
			\Xi := \cbr{ \boldsymbol{v}_h \in \boldsymbol{M}_h \; : \;  \norm{\boldsymbol{v}_h}_{1,h}^2 = 
				\frac{C_P^2}{C_\mathrm{coer}^2 \kappa^2}  
				\norm[1]{\Phi_-'(y_h^{n-1} + 
					\overline{c_0}) - w_h^n}_{L^2(\Omega)}^2 }.
		\end{equation}
		It holds that $\del{\boldsymbol{\mathcal{F}}(\boldsymbol{v}_h), \boldsymbol{v}_h}_{1,h} \ge 
		0$ 
		for any 
		$\boldsymbol{v}_h \in \Xi$. Consequently, Brouwer's fixed point theorem guarantees the existence of a  function 
		$\boldsymbol{y}_h^n
		\in \boldsymbol{M}_h$ such that $\boldsymbol{\mathcal{F}}(\boldsymbol{y}_h^n) = 0$ and
		\begin{equation} \label{eq:apriori_est_yh}
			\norm{\boldsymbol{y}_h^n}_{1,h}^2 \le 	\frac{C_P^2}{C_\mathrm{coer}^2 \kappa^2}  
			\norm[1]{\Phi_-'(y_h^{n-1} + 
				\overline{c_0}) - w_h^n}_{L^2(\Omega)}^2.
		\end{equation}
		Equivalently,
		$\boldsymbol{y}_h^n
		\in \boldsymbol{M}_h$ satisfies \eqref{eq:brouwer_eq}.
		
		Lastly, we show that the solution $\boldsymbol{y}_h^n
		\in \boldsymbol{M}_h$ solving \eqref{eq:brouwer_eq} is unique. To this end, assume 
		$\boldsymbol{y}_1 \in 
		\boldsymbol{M}_h$ and 
		$\boldsymbol{y}_2 \in \boldsymbol{M}_h$ are two solutions of \eqref{eq:brouwer_eq}. In both 
		cases, we 
		test
		\eqref{eq:brouwer_eq} with $\mathring{\boldsymbol{\phi}}_h= \boldsymbol{y}_1 - 
		\boldsymbol{y}_2$, 
		subtract 
		the 
		two resulting 
		equations, and use 
		the coercivity of the bilinear form $a_{\mathcal{D}}$ to find
		\begin{equation}
			C_\mathrm{coer} \kappa \norm{\boldsymbol{y}_1 - \boldsymbol{y}_2}_{1,h}^2 \le - \del{ 
				\Phi_+'(y_1 
				+ \overline{c_0}) - \Phi_+'(y_2 + \overline{c_0}), y_1 - y_2}_{\Omega} .
		\end{equation}
		As $\Phi_{+} \in \mathcal{C}^2$ is convex, $\Phi_{+}'$ is nondecreasing, and thus
		\begin{equation}
			\del{ \Phi_+'(y_1 + 
				\overline{c_0}) - \Phi_+'(y_2 + \overline{c_0}), y_1 - y_2}_{\Omega} \ge 0. 
			\label{eq:convexityphi}
		\end{equation}
		Hence $\norm{\boldsymbol{y}_1 - \boldsymbol{y}_2}_{1,h}^2 \le 0$, so that 
		$\boldsymbol{y}_1 
		= 
		\boldsymbol{y}_2$  as required.
		\qed
	\end{proof}
	We are now ready to show that \eqref{eq:CH_disc_zero_mean} is well posed by employing Lemma 
	\ref{lemma:minty_browder} and Lemma \ref{lem:uniq_solv_p1}. 
	\begin{lemma}\label{lemma:uniq_solv_p2}
	The scheme \eqref{eq:CH_disc_zero_mean} is 
		uniquely solvable for any fixed $\tau$, $h$, and $\kappa$. 
	\end{lemma}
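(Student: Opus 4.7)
The plan is to reduce \eqref{eq:CH_disc_zero_mean} to a single nonlinear equation in $\boldsymbol{w}_h^n$ and then apply the Minty--Browder theorem (Lemma \ref{lemma:minty_browder}). First, Lemma \ref{lem:uniq_solv_p1} tells us that for every $\boldsymbol{w} \in \boldsymbol{M}_h$ there is a unique $\boldsymbol{y}(\boldsymbol{w}) \in \boldsymbol{M}_h$ solving \eqref{eq:CH_disc_zero_mean_b} with $w_h^n$ replaced by $w$. Using this solution map, I would define the operator $\mathcal{A}: \boldsymbol{M}_h \to \boldsymbol{M}_h'$ by
\begin{equation*}
\langle \mathcal{A}(\boldsymbol{w}),\mathring{\boldsymbol{\chi}}\rangle = \tau^{-1}(y(\boldsymbol{w}),\mathring{\chi})_{\Omega} + a_{\mathcal{D}}(\boldsymbol{w},\mathring{\boldsymbol{\chi}}),
\end{equation*}
and observe that \eqref{eq:CH_disc_zero_mean} is equivalent to solving $\mathcal{A}(\boldsymbol{w}_h^n) = F$ where $\langle F,\mathring{\boldsymbol{\chi}}\rangle = \tau^{-1}(y_h^{n-1},\mathring{\chi})_{\Omega}$. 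Since $\boldsymbol{M}_h$ is finite-dimensional, it is automatically separable and reflexive, so only the three operator hypotheses of Lemma \ref{lemma:minty_browder} must be checked.

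For strict monotonicity, I would take $\boldsymbol{w}_1,\boldsymbol{w}_2 \in \boldsymbol{M}_h$, write $\boldsymbol{y}_i = \boldsymbol{y}(\boldsymbol{w}_i)$, subtract the two copies of \eqref{eq:brouwer_eq}, and test with $\mathring{\boldsymbol{\phi}}_h = \boldsymbol{y}_1-\boldsymbol{y}_2$. This yields
\begin{equation*}
(w_1-w_2,y_1-y_2)_{\Omega} = (\Phi_+'(y_1+\overline{c_0})-\Phi_+'(y_2+\overline{c_0}),y_1-y_2)_{\Omega} + \kappa\, a_{\mathcal{D}}(\boldsymbol{y}_1-\boldsymbol{y}_2,\boldsymbol{y}_1-\boldsymbol{y}_2),
\end{equation*}
which is non-negative by the convexity argument \eqref{eq:convexityphi} and \eqref{eq:aD_coercive}. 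Combining this with the coercivity of $a_{\mathcal{D}}$ applied to $\boldsymbol{w}_1-\boldsymbol{w}_2$ gives
\begin{equation*}
\langle \mathcal{A}(\boldsymbol{w}_1)-\mathcal{A}(\boldsymbol{w}_2),\boldsymbol{w}_1-\boldsymbol{w}_2\rangle \geq C_{\mathrm{coer}}\|\boldsymbol{w}_1-\boldsymbol{w}_2\|_{1,h}^2,
\end{equation*}
which is strictly positive whenever $\boldsymbol{w}_1\neq\boldsymbol{w}_2$.

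For coercivity, the strategy is to test \eqref{eq:brouwer_eq} with $\mathring{\boldsymbol{\phi}}_h=\boldsymbol{y}(\boldsymbol{w})$, re-use the expansion argument of \eqref{eq:convex_potential_positivity} to drop the $\Phi_+'$ contribution, and apply Poincaré's inequality \eqref{eq:poincare} together with Young's inequality as in \eqref{eq:bounding_ph_-} to absorb the $\Phi_-'$ and $w$ terms. This produces an identity of the form $(w,y(\boldsymbol{w}))_{\Omega} \geq C_{\mathrm{coer}}\kappa\|\boldsymbol{y}(\boldsymbol{w})\|_{1,h}^2 + (\Phi_-'(y_h^{n-1}+\overline{c_0}),y(\boldsymbol{w}))_{\Omega}$, which when combined with $a_{\mathcal{D}}(\boldsymbol{w},\boldsymbol{w})\geq C_{\mathrm{coer}}\|\boldsymbol{w}\|_{1,h}^2$ yields $\langle \mathcal{A}(\boldsymbol{w}),\boldsymbol{w}\rangle \geq C_{\mathrm{coer}}\|\boldsymbol{w}\|_{1,h}^2 - C$ with $C$ depending only on data from the previous timestep; in particular $\langle \mathcal{A}(\boldsymbol{w}),\boldsymbol{w}\rangle/\|\boldsymbol{w}\|_{1,h} \to \infty$ as $\|\boldsymbol{w}\|_{1,h}\to\infty$.

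The step I expect to require the most care is hemicontinuity, because the map $\boldsymbol{w}\mapsto \boldsymbol{y}(\boldsymbol{w})$ is implicitly defined through a nonlinear equation. Fortunately, the strict monotonicity estimate above already delivers Lipschitz continuity: running the computation with $\boldsymbol{w}_1,\boldsymbol{w}_2$ general and using Cauchy--Schwarz together with \eqref{eq:poincare} gives $\|\boldsymbol{y}(\boldsymbol{w}_1)-\boldsymbol{y}(\boldsymbol{w}_2)\|_{1,h}\lesssim \|w_1-w_2\|_{L^2(\Omega)}$. Therefore $t\mapsto \boldsymbol{y}(\boldsymbol{u}+t\boldsymbol{v})$ is continuous, so $t\mapsto \langle \mathcal{A}(\boldsymbol{u}+t\boldsymbol{v}),\boldsymbol{\chi}\rangle$ is continuous in $t$ for every fixed $\boldsymbol{u},\boldsymbol{v},\boldsymbol{\chi}$, i.e.\ $\mathcal{A}$ is hemicontinuous. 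Applying Lemma \ref{lemma:minty_browder} to $\mathcal{A}$ with the data $F$ then furnishes a unique $\boldsymbol{w}_h^n\in\boldsymbol{M}_h$, and Lemma \ref{lem:uniq_solv_p1} recovers the unique $\boldsymbol{y}_h^n$, completing the proof.
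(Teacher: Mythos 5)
Your proposal is correct and follows essentially the same route as the paper: you build the operator from the solution map of Lemma~\ref{lem:uniq_solv_p1} and verify the Minty--Browder hypotheses (coercivity, hemicontinuity via the Lipschitz estimate on $\boldsymbol{w}\mapsto\boldsymbol{y}(\boldsymbol{w})$ obtained from the monotonicity computation, and strict monotonicity from \eqref{eq:convexityphi} plus the coercivity of $a_{\mathcal{D}}$). The only differences are cosmetic -- your $\mathcal{A}$ is the paper's $\boldsymbol{\mathcal{G}}$ rescaled by $\tau^{-1}$ with the known term $y_h^{n-1}$ moved to the right-hand side as data $F$, and the paper additionally records boundedness of the operator, which Lemma~\ref{lemma:minty_browder} as stated does not require.
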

	
	\begin{proof}
		For any $\boldsymbol{w}_h = (w_h,\hat{w}_h) \in \boldsymbol{M}_h$, let 
		$\boldsymbol{y}_h^n(w_h)\in \boldsymbol{M}_h$ be the unique solution to 
		\eqref{eq:brouwer_eq}.
		There exists a well-defined operator $\boldsymbol{\mathcal{G}} : \boldsymbol{M}_h \to 
		\boldsymbol{M}_h'$ satisfying
		%
		\begin{equation} \label{eq:operator_G}
			\big \langle \boldsymbol{\mathcal{G}}(\boldsymbol{w}_h), \mathring{\boldsymbol{\chi}}_h
			\big 
			\rangle 
			= 
			(y_h^n - 
			y_h^{n-1},\mathring{\chi}_h)_{\Omega} + \tau a_{\mathcal{D}}(\boldsymbol{w}_h, 
			\mathring{\boldsymbol{\chi}}_h), \quad\forall \boldsymbol{\chi}_h\in\boldsymbol{M}_h.
		\end{equation}
		We begin by showing $\boldsymbol{\mathcal{G}}$ is coercive. Take 
		$\mathring{\boldsymbol{\chi}}_h = \boldsymbol{w}_h$ in \eqref{eq:operator_G} and note that 
		by 
		the coercivity of the 
		bilinear 
		form $a_{\mathcal{D}}(\cdot,\cdot)$ \eqref{eq:aD_coercive}, we have that 
		\begin{equation} 
			\big \langle \boldsymbol{\mathcal{G}}(\boldsymbol{w}_h), \boldsymbol{w}_h \big 
			\rangle 
			\ge
			(y_h^n - 
			y_h^{n-1}, w_h)_{\Omega} + C_\mathrm{coer} \tau \norm{\boldsymbol{w}_h}_{1,h}^2.
		\end{equation}
		The Cauchy--Schwarz's, Poincar\'{e}'s \eqref{eq:poincare}, and Young's
		inequalities 
		then yield for any $\epsilon_1 > 0$: 
		\begin{equation} \label{eq:coer_1}
			\big \langle \boldsymbol{\mathcal{G}}(\boldsymbol{w}_h), \boldsymbol{w}_h \big 
			\rangle 
			\ge
			(y_h^n , w_h)_{\Omega} - \frac{\epsilon_1}{2} \norm[1]{
				y_h^{n-1}}_{L^2(\Omega)}^2 + \del[2]{C_\mathrm{coer} \tau-  \frac{C_P^2}{2\epsilon_1} } 
			\norm{\boldsymbol{w}_h}_{1,h}^2.
		\end{equation}
		We now aim to find a lower bound on $(y_h^n, w_h)_{\Omega} $. To this end, we test 
		\eqref{eq:CH_disc_zero_mean_b} 
		with 
		$\mathring{\boldsymbol{\phi}}_h = \boldsymbol{y}_h^n$ and once again use the coercivity of 
		the 
		bilinear 
		form 
		$a_{\mathcal{D}}$ \eqref{eq:aD_coercive}:
		\begin{equation}
			(w_h, y_h^n)_{\Omega} \ge (\Phi_+'(y_h^n + \overline{c_0}) + 
			\Phi_-'(y_h^{n-1} + \overline{c_0}), y_h^n)_{\Omega} + 
			C_\mathrm{coer} \kappa \norm{\boldsymbol{y}_h^n}_{1,h}^2. 
		\end{equation}
		Proceeding as in \Cref{lem:uniq_solv_p1}, see  the derivation of 
		\eqref{eq:convex_potential_positivity} \eqref{eq:bounding_ph_-}, we find that
		\begin{equation}
			(\Phi_+'(y_h^n + \overline{c_0}), y_h^n)_{\Omega} \ge 0,
		\end{equation}
		and for any $\epsilon_2 >0$ that 
		\begin{equation} \label{eq:concave_potential_bound}
			( \Phi_-'(y_h^{n-1} + \overline{c_0}), y_h^n)_{\Omega} \ge 
			-\frac{\epsilon_2}{2}  
			\norm[1]{\Phi_-'(y_h^{n-1} + 
				\overline{c_0}) }_{L^2(\Omega)}^2 - \frac{C_P^2}{2\epsilon_2} 
			\norm{\boldsymbol{y}_h^n}_{1,h}^2. 
		\end{equation}
		Thus, we obtain 
		\begin{equation} \label{eq:coer_2}
			(w_h, y_h^n)_{\Omega} \ge 
			\del[2]{ C_\mathrm{coer} \kappa - \frac{C_P^2}{2\epsilon_2}} 
			\norm{\boldsymbol{y}_h^n}_{1,h}^2 	
			-\frac{\epsilon_2}{2}  
			\norm[1]{\Phi_-'(y_h^{n-1} + 
				\overline{c_0}) }_{L^2(\Omega)}^2 .
		\end{equation}
		Combining \eqref{eq:coer_1} and \eqref{eq:coer_2}, we have
		\begin{multline} 
			\big \langle \boldsymbol{\mathcal{G}}(\boldsymbol{w}_h), \boldsymbol{w}_h \big 
			\rangle 
			\ge  \del[2]{C_\mathrm{coer}\tau -  \frac{C_P^2}{2\epsilon_1} } 
			\norm{\boldsymbol{w}_h}_{1,h}^2 + \del[2]{ C_\mathrm{coer} \kappa - 
				\frac{C_P^2}{2\epsilon_2}} 
			\norm{\boldsymbol{y}_h^n}_{1,h}^2 	\\  - \frac{\epsilon_1}{2} \norm[1]{
				y_h^{n-1}}_{L^2(\Omega)}^2 -\frac{\epsilon_2}{2}  
			\norm[1]{\Phi_-'(y_h^{n-1} + 
				\overline{c_0}) }_{L^2(\Omega)}^2.
		\end{multline}
		Choosing $\epsilon_1 
		=C_P^2/(C_\mathrm{coer}\tau),  \epsilon_2 = C_P^2/(2 C_\mathrm{coer} \kappa )$  we find 
		\begin{equation} 
			\big \langle \boldsymbol{\mathcal{G}}(\boldsymbol{w}_h), \boldsymbol{w}_h \big 
			\rangle 
			\ge  \frac{C_\mathrm{coer} \tau }{2}
			\norm{\boldsymbol{w}_h}_{1,h}^2  
			- \frac{C_P^2}{4 C_\mathrm{coer} \kappa} \norm[1]{\Phi_-'(y_h^{n-1} + 
				\overline{c_0}) }_{L^2(\Omega)}^2 - \frac{C_P^2}{C_\mathrm{coer}\tau} \Vert 
			y_h^{n-1}\Vert_{L^2(\Omega)}^2, 
		\end{equation}
		and thus
		\begin{equation}
			\lim_{ \norm[0]{\boldsymbol{w}_h}_{1,h} \to +\infty} 	\frac{\big \langle 
				\boldsymbol{\mathcal{G}}(\boldsymbol{w}_h), \boldsymbol{w}_h \big 
				\rangle }{	\norm{\boldsymbol{w}_h}_{1,h} } = + \infty.
		\end{equation}

		Next, we show that the operator $\boldsymbol{\mathcal{G}}$ is bounded. 
		For any test function $\mathring{\boldsymbol{\chi}}_h \in \boldsymbol{M}_h$, the 
		Cauchy--Schwarz's inequality, Poincar\'{e} inequality's \eqref{eq:poincare}, and 
		continuity 
		of 
		the bilinear form 
		$a_{\mathcal{D}}$ \eqref{eq:aD_continuous} yield
		\begin{equation} \label{eq:G_bnd_1}
			\begin{split}
				|	\big \langle \boldsymbol{\mathcal{G}}(\boldsymbol{w}_h), 
				\mathring{\boldsymbol{\chi}}_h
				\big 
				\rangle |
				& \le \norm{y_h^n}_{L^2(\Omega)}\norm{\mathring{\chi}_h}_{L^2(\Omega)} + 
				\norm[0]{y_h^{n-1}}_{L^2(\Omega)}\norm{\mathring{\chi}_h}_{L^2(\Omega)} + \tau 
				a_{\mathcal{D}}(\boldsymbol{w}_h, \mathring{\boldsymbol{\chi}}_h) \\
				& \le \del{ C_\mathrm{cont} \tau \norm{\boldsymbol{w}_h}_{1,h} + 
					C_P^2  \norm{\boldsymbol{y}_h^n}_{1,h} + C_P 
					\norm[0]{y_h^{n-1}}_{L^2(\Omega)}} 
				\norm{\mathring{\boldsymbol{\chi}}_h}_{1,h}.
			\end{split}
		\end{equation}
		As $\boldsymbol{y}_h^n(w_h) \in \boldsymbol{M}_h$ is the unique solution of 
		\eqref{eq:brouwer_eq}, 
		we take
		$\mathring{\boldsymbol{\phi}}_h = \boldsymbol{y}_h^n$ in \eqref{eq:brouwer_eq} and use
		the coercivity of $a_{\mathcal{D}}$ to find 
		\begin{equation}
			C_\mathrm{coer} \kappa \norm{\boldsymbol{y}_h^n}_{1,h} \le
			-(\Phi_+'(y_h^n + \overline{c_0}) + 
			\Phi_-'(y_h^{n-1} + \overline{c_0}), y_h^n)_{\Omega}  + (w_h^n, 
			y_h^n)_{\Omega}.
		\end{equation}
		Recall from \eqref{eq:convex_potential_positivity} and \eqref{eq:concave_potential_bound} that 
		there 
		exists some $\epsilon_3 >0$ such that
		\begin{equation}
			\begin{split}
				C_\mathrm{coer} \kappa \norm{\boldsymbol{y}_h^n}_{1,h}^2 &\le
				\frac{\epsilon_3}{2}  
				\norm[1]{\Phi_-'(y_h^{n-1} + 
					\overline{c_0}) }_{L^2(\Omega)}^2 + \frac{C_P^2}{2\epsilon_3} 
				\norm{\boldsymbol{y}_h^n}_{1,h}^2 + (w_h^n, 
				y_h^n)_{\Omega} \\
				& \le 	\frac{\epsilon_3}{2}  
				\norm[1]{\Phi_-'(y_h^{n-1} + 
					\overline{c_0}) }_{L^2(\Omega)}^2 + \frac{C_P^2}{\epsilon_3} 
				\norm{\boldsymbol{y}_h^n}_{1,h}^2 	 + 
				\frac{C_P^2\epsilon_3}{2}\norm{\boldsymbol{w}_h}_{1,h}^2,
			\end{split}
		\end{equation}
		where we have used the Cauchy--Schwarz's, broken Poincar\'{e}'s, and Young's inequalities to 
		obtain 
		the 
		second inequality. Choosing $\epsilon_3 = 
		(2C_P^2)/(C_\mathrm{coer} \kappa)$ and rearranging yields
		\begin{equation}
			\norm{\boldsymbol{y}_h^n}_{1,h}^2 \le 	\frac{2 C_P^2}{C_\mathrm{coer}^2 \kappa^2}  
			\norm[1]{\Phi_-'(y_h^{n-1} + 
				\overline{c_0}) }_{L^2(\Omega)}^2 + 
			\frac{2 C_P^4}{C_\mathrm{coer}^2\kappa^2}\norm{\boldsymbol{w}_h}_{1,h}^2,
		\end{equation}
		and using the fact that $\sqrt{a+b} \le \sqrt{a} + \sqrt{b}$ for $a,b \ge 0$, we find
		\begin{equation} \label{eq:G_bnd_2}
			\norm{\boldsymbol{y}_h^n}_{1,h} \le 	\frac{\sqrt{2} C_P}{C_\mathrm{coer} \kappa}  
			\norm[1]{\Phi_-'(y_h^{n-1} + 
				\overline{c_0}) }_{L^2(\Omega)} + 
			\frac{\sqrt{2} C_P^2}{C_\mathrm{coer}\kappa}\norm{\boldsymbol{w}_h}_{1,h}.
		\end{equation}
		Combining \eqref{eq:G_bnd_1} and \eqref{eq:G_bnd_2}, 
		\begin{equation} 
			\begin{split}
				\big|	\big \langle \boldsymbol{\mathcal{G}}(\boldsymbol{w}_h), 
				\mathring{\boldsymbol{\chi}}_h
				\big 
				\rangle \big|
				& \le \del[3]{ \del[2]{C_\mathrm{cont} \tau + \frac{\sqrt{2}C_P^4}{C_\mathrm{coer}\kappa} 
					}\norm{\boldsymbol{w}_h}_{1,h} 
					+ 
					\frac{\sqrt{2}C_P^3}{C_\mathrm{coer} \kappa}  
					\norm[1]{\Phi_-'(y_h^{n-1} + 
						\overline{c}_0) }_{L^2(\Omega)}  + C_P \norm[0]{y_h^{n-1}}_{L^2(\Omega)}} 
				\norm{\mathring{\boldsymbol{\chi}}_h}_{1,h}.
			\end{split} 
		\end{equation}
		Consequently, it holds that
		\begin{equation}
			\begin{split}
				\norm{\boldsymbol{\mathcal{G}}(\boldsymbol{w}_h)}_{  \boldsymbol{M}_h'} & = 
				\sup_{ \substack{\mathring{\boldsymbol{\chi}}_h \in \boldsymbol{M}_h \\ 
						\norm[0]{\mathring{\boldsymbol{\chi}}_h}_{1,h} 
						= 
						1}} \big|	\big 
				\langle 
				\boldsymbol{\mathcal{G}}(\boldsymbol{w}_h), \mathring{\boldsymbol{\chi}}_h \big 
				\rangle \big| \\
				& \le  \del[2]{C_\mathrm{cont} \tau + \frac{\sqrt{2} C_P^4}{C_\mathrm{coer}\kappa} 
				}\norm{\boldsymbol{w}_h}_{1,h} 
				+ 
				\frac{\sqrt{2}C_P^3}{C_\mathrm{coer} \kappa}  
				\norm[1]{\Phi_-'(y_h^{n-1} + 
					\overline{c}_0) }_{L^2(\Omega)}  + C_P \norm[0]{y_h^{n-1}}_{L^2(\Omega)}.
			\end{split}
		\end{equation}
		In other words, the operator $\boldsymbol{\mathcal{G}}$ maps bounded sets in 
		$\boldsymbol{M}_h$ 
		to bounded 
		sets in 
		$\boldsymbol{M}_h'$.
		
		Next, we show hemicontinuity of the operator $\boldsymbol{\mathcal{G}}$. It suffices to 
		show for 
		any $\boldsymbol{w}_h, \boldsymbol{v}_h, \mathring{\boldsymbol{\chi}}_h \in 
		\boldsymbol{M}_h$ 
		the 
		(sequential) continuity of the mapping $g:\mathbb{R}\rightarrow \mathbb{R}$ defined by 
		\begin{equation}
			g(t) =  	\big \langle \boldsymbol{\mathcal{G}}(\boldsymbol{w}_h+ 
			t\boldsymbol{v}_h), 
			\mathring{\boldsymbol{\chi}}_h \big 
			\rangle.
		\end{equation}
		To this end, take an arbitrary $t^\star \in \mathbb{R}$ and a sequence $(t_i)_{i \in 
			\mathbb{N}} 
		\subset 
		\mathbb{R}$ such 
		that $t_i \to t^\star$ as $i \to \infty$. From the definition of $\boldsymbol{\mathcal{G}}$ 
		\eqref{eq:operator_G} and the 
		bilinearity and continuity of the form $a_{\mathcal{D}}$ \eqref{eq:aD_continuous}, we find that
		\begin{equation} \label{eq:hemi_cont_1}
			\begin{split}
				\big|\big \langle \boldsymbol{\mathcal{G}}(\boldsymbol{w}_h + t^\star 
				\boldsymbol{v}_h), &
				\mathring{\boldsymbol{\chi}}_h \big 
				\rangle - \big \langle \boldsymbol{\mathcal{G}}(\boldsymbol{w}_h + t_i 
				\boldsymbol{v}_h), \mathring{\boldsymbol{\chi}}_h  \big 
				\rangle \big|
				\\
				&=  \big| 	(y_h^n(w_h + t^\star v_h) - 
				y_h^n(w_h + t_i v_h),\mathring{\chi}_h)_{\Omega}  + \tau 
				(t^\star - t_i) a_{\mathcal{D}} ( \boldsymbol{v}_h, \mathring{\boldsymbol{\chi}}_h ) 
				\big|. 
			\end{split}
		\end{equation}
		Given $w_h + t^\star v_h \in M_h$ and 
		$w_h + t_i v_h \in M_h$, we have respectively the 
		solutions
		$\boldsymbol{y}_h^n(w_h + t^\star v_h)$ and 
		$\boldsymbol{y}_h^n(w_h + t_i 
		v_h)$ to \eqref{eq:brouwer_eq}. We can choose 
		$\mathring{\boldsymbol{\phi}}_h =  \boldsymbol{y}_h^n(w_h + t^\star 
		v_h) - 
		\boldsymbol{y}_h^n(w_h + t_i v_h)$ in \eqref{eq:brouwer_eq}, 
		subtract 
		the 
		two resulting equations, and use the bilinearity and coercivity of $a_{\mathcal{D}}$ 
		\eqref{eq:aD_coercive}
		to find:
		%
		\begin{align*}
			&C_\mathrm{coer} \kappa \norm{\boldsymbol{y}_h^n(w_h + t^\star v_h) - 
				\boldsymbol{y}_h^n(w_h + t_i v_h)}_{1,h}^2 \\& 
			\leq -(\Phi_+'(y_h^n(w_h + t^\star v_h) + 
			\overline{c_0}) - \Phi_{+}'(y_h^n(w_h + t_i v_h) + 
			\overline{c_0}) , \mathring{\phi_h} )_{\Omega} + (t^\star - 
			t_i)(v_h,\mathring{\phi_h})_{\Omega}. 
		\end{align*}
		For the first term, we use \eqref{eq:convexityphi} to have
		\begin{align*}
			(\Phi_{+}^{'}(y_h^n(w_h + t^\star v_h) + \overline{c_0}) 
			- \Phi_{+}'(y_h^n(w_h + t_i v_h) + \overline{c_0}), y_h^n(w_h + t^\star v_h)-y_h^n(w_h + t_i 
			v_h))_\Omega \geq 0. 
		\end{align*}
		Thus, with Cauchy--Schwarz's inequality and the discrete Poincar\'{e}'s inequality 
		\eqref{eq:poincare}, we obtain: 
		\begin{align*}
			C_\mathrm{coer} \kappa \norm{\boldsymbol{y}_h^n(w_h + t^\star v_h) - 
				\boldsymbol{y}_h^n(w_h + t_i v_h)}_{1,h} 
			\leq C_P^2 |t^\star - t_i|\|\boldsymbol{v}_h\|_{1,h}. 
		\end{align*}
		Using this bound in \eqref{eq:hemi_cont_1}, we obtain 
		\begin{align*}
			\big|\big \langle \boldsymbol{\mathcal{G}}(\boldsymbol{w}_h + t^\star 
			\boldsymbol{v}_h), &
			\mathring{\boldsymbol{\chi}}_h  \big 
			\rangle - \big \langle \boldsymbol{\mathcal{G}}(\boldsymbol{w}_h + t_i 
			\boldsymbol{v}_h), 
			\mathring{\boldsymbol{\chi}}_h \big 
			\rangle \big|
			\\
			&\le 	\del[2]{ \frac{C_P^4 }{C_\mathrm{coer}\kappa} + C_\mathrm{cont} \tau }|t^\star 
			-t_i| 
			\norm{\boldsymbol{v}_h}_{1,h} 
			\norm{\mathring{\boldsymbol{\chi}}_h}_{1,h}.
		\end{align*}
		Since the right-hand side above tends to zero as $i$ tends to infinity, this shows that $\boldsymbol{\mathcal{G}}$ is hemicontinuous.
		Finally, we show the strict monotonicity of $\boldsymbol{\mathcal{G}}$. For any 
		$\boldsymbol{w}_h = (w_h,\hat{w}_h), \boldsymbol{s}_h = (s_h,\hat{s}_h) \in \boldsymbol{M}_h$, 
		it holds that 
		\begin{equation}
			\begin{split}
				\big\langle &\boldsymbol{\mathcal{G}}(\boldsymbol{w}_h) - 
				\boldsymbol{\mathcal{G}}(\boldsymbol{s}_h),  
				\boldsymbol{w}_h - \boldsymbol{s}_h \big 
				\rangle \\ & =
				(y_h^n(w_h) - y_h^{n}(s_h), w_h - s_h)_{\Omega} + \tau 
				a_{\mathcal{D}}(\boldsymbol{w}_h - \boldsymbol{s}_h, \boldsymbol{w}_h - 
				\boldsymbol{s}_h) \\
				& \ge 	(y_h^n(w_h) - y_h^{n}(s_h),w_h - s_h)_{\Omega} + C_\mathrm{coer}\tau 
				\norm{\boldsymbol{w}_h - \boldsymbol{s}_h}_{1,h}^2,
			\end{split}
		\end{equation}
		where we have used the coercivity of the bilinear form $a_{\mathcal{D}}$ 
		\eqref{eq:aD_coercive}.
		To each given $w_h, s_h \in M_h$, we have  
		corresponding 
		unique solutions $\boldsymbol{y}_h^n(w_h)$ and 
		$\boldsymbol{y}_h^n(s_h)$ to 
		\eqref{eq:brouwer_eq}. 
		Therefore, by testing \eqref{eq:brouwer_eq} with $\mathring{\boldsymbol{\phi}}_h= 
		\boldsymbol{y}_h^n(w_h) - \boldsymbol{y}_h^n(s_h)\in 
		\boldsymbol{M}_h$ for 
		each solution and subtracting the two resulting equations yields
		\begin{equation}
			\begin{split}
				(y_h^n(w_h) - 
				y_h^{n}(s_h),w_h - s_h)_{\Omega} &= \del{ 
					\Phi_{+}'(y_h^n(w_h) + 
					\overline{c_0}) 
					-  
					\Phi_{+}'(y_h^n(s_h) + \overline{c_0}), y_h^n(w_h) - 
					y_h^{n}(s_h) } \\
				&+ \kappa a_{\mathcal{D}}( \boldsymbol{y}_h^n(w_h) - 
				\boldsymbol{y}_h^n(s_h), \boldsymbol{y}_h^n(w_h) - 
				\boldsymbol{y}_h^n(s_h)) \\
				& \ge C_\mathrm{coer} \kappa \norm{\boldsymbol{y}_h(w_h) - 
					\boldsymbol{y}_h(s_h)}_{1,h}^2 \ge 0,
			\end{split}
		\end{equation}
		where we have used the coercivity of $a_{\mathcal{D}}$ \eqref{eq:aD_coercive} and 
		\eqref{eq:convexityphi}. 
		Consequently,
		\begin{equation} \label{eq:monotone}
			\begin{split}
				\big\langle &\boldsymbol{\mathcal{G}}(\boldsymbol{w}_h) - 
				\boldsymbol{\mathcal{G}}(\boldsymbol{s}_h),  
				\boldsymbol{w}_h - \boldsymbol{s}_h \big 
				\rangle  \ge C_\mathrm{coer}\tau \norm{\boldsymbol{w}_h - \boldsymbol{s}_h}_{1,h}^2 \ge 0,
			\end{split}
		\end{equation}
		and thus $\boldsymbol{\mathcal{G}}_h$ is monotone. Moreover, as 
		$\norm[0]{\cdot}_{1,h}$ 
		defines 
		a norm 
		on $\boldsymbol{M}_h$, it immediately follows that the inequality in \eqref{eq:monotone} is 
		strict if 
		$\boldsymbol{w}_h \ne \boldsymbol{s}_h$; hence, $\boldsymbol{\mathcal{G}}_h$ is strictly 
		monotone.
		
		We have now verified all of the assumptions of the Minty--Browder theorem, and we conclude 
		that there 
		exists a unique solution $\boldsymbol{w}_h^n$ satisfying
		\begin{equation}
			\big \langle \boldsymbol{\mathcal{G}}(\boldsymbol{w}_h^n), 
			\mathring{\boldsymbol{\chi}}_h
			\big 
			\rangle = 0, \quad \forall \mathring{\boldsymbol{\chi}}_h \in \boldsymbol{M}_h,
		\end{equation}
		implying that $(\boldsymbol{y}_h^n(w_h^n),  \boldsymbol{w}_h^n) \in 
		\boldsymbol{M}_h \times \boldsymbol{M}_h$ is the unique solution to 	
		\eqref{eq:CH_disc_zero_mean}.
		\qed
	\end{proof}

	\section{Stability analysis for $\mathcal{C}^2$ chemical energy density}
	In this section, we  show stability estimates for the discrete solutions. 
	We will make use of the following operator.  
	We define an operator $\boldsymbol{\mathcal{J}}_h : M_h \to \boldsymbol{M}_h$ such that
	\begin{equation}
		a_{\mathcal{D}}(\boldsymbol{\mathcal{J}}_h(w_h), \boldsymbol{v}_h) = 
		(w_h,v_h)_{\Omega}, \quad \forall \boldsymbol{v}_h \in \boldsymbol{M}_h,
	\end{equation}
	which is well-defined by the Lax--Milgram theorem. 

	\begin{lemma} \label{lem:alternate_green_op_properties}
		For all $w_h \in M_h$ it holds that
		\begin{equation}
			a_{\mathcal{D}}(\boldsymbol{\mathcal{J}}_h(w_h), \boldsymbol{v}_h) = 
			(w_h,v_h)_{\Omega}, \quad \forall \boldsymbol{v}_h \in \boldsymbol{S}_h.
		\end{equation}
		Further, it holds that
		\begin{equation} \label{eq:alternate_green_op_bnd}
			|(w_h,v)_{\Omega}| \lesssim \norm[0]{\boldsymbol{\mathcal{J}}_h(w_h)}_{1,h} 
			\norm[0]{v}_{\DG}, \quad \forall  v \in H^1(\mathcal{E}_h), \quad \forall w_h \in M_h.
		\end{equation}
	\end{lemma}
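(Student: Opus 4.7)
The two claims are essentially decoupled: the first is a standard ``lift to the mean-value subspace'' argument, while the second reduces to constructing a convenient pair in $\boldsymbol{S}_h$ that represents a given $v \in H^1(\mathcal{E}_h)$ against the test $w_h \in M_h$, then invoking continuity of $a_{\mathcal{D}}$.

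For the first identity, given $\boldsymbol{v}_h = (v_h,\hat{v}_h) \in \boldsymbol{S}_h$, I would split off the mean: write $\boldsymbol{v}_h = \boldsymbol{v}_h^0 + (\overline{v_h},\overline{v_h})$ with $\boldsymbol{v}_h^0 = (v_h - \overline{v_h}, \hat{v}_h - \overline{v_h})$. Since $v_h - \overline{v_h} \in M_h$, we have $\boldsymbol{v}_h^0 \in \boldsymbol{M}_h$, so the defining property of $\boldsymbol{\mathcal{J}}_h$ gives $a_{\mathcal{D}}(\boldsymbol{\mathcal{J}}_h(w_h),\boldsymbol{v}_h^0) = (w_h, v_h - \overline{v_h})_\Omega$. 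By \eqref{eq:ad_0_constant}, the constant-pair contribution to $a_{\mathcal{D}}$ vanishes, and since $w_h \in M_h$ has zero mean we also have $(w_h,\overline{v_h})_\Omega = 0$. Collecting these yields $a_{\mathcal{D}}(\boldsymbol{\mathcal{J}}_h(w_h),\boldsymbol{v}_h) = (w_h, v_h)_\Omega$.

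For the bound \eqref{eq:alternate_green_op_bnd}, the plan is to test the (extended) identity from the first part against a carefully chosen pair built from $v$. Since $w_h \in S_h$ elementwise, the $L^2$-orthogonality of $\pi_h$ gives $(w_h,v)_\Omega = (w_h,\pi_h v)_\Omega$. Define $\hat{v}_h^\star \in \hat{S}_h$ by $\hat{v}_h^\star|_e = \{\pi_h v\}_e$ on interior faces $e \in \Gamma_h^0$ and $\hat{v}_h^\star|_e = \pi_h v|_E$ on boundary faces $e \subset \partial E \cap \partial\Omega$; then $\boldsymbol{v}_h^\star = (\pi_h v, \hat{v}_h^\star) \in \boldsymbol{S}_h$. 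By the first part,
\begin{equation*}
(w_h,v)_\Omega = (w_h, \pi_h v)_\Omega = a_{\mathcal{D}}(\boldsymbol{\mathcal{J}}_h(w_h), \boldsymbol{v}_h^\star),
\end{equation*}
so by continuity \eqref{eq:aD_continuous} it remains to show $\|\boldsymbol{v}_h^\star\|_{1,h} \lesssim \|v\|_{\DG}$. The $\nabla \pi_h v$ contribution is controlled by $\sum_E |v|_{H^1(E)}^2$ via the $H^1$-stability \eqref{eq:w1p_stab_proj}. The jump penalty in $j_1$ vanishes on boundary faces by construction, and on each interior face $e$ shared by $E_\pm$ one has $\pi_h v|_{E_\pm} - \{\pi_h v\}_e = \pm\tfrac12\jump{\pi_h v}_e$, so
\begin{equation*}
\jump{\pi_h v}_e = \jump{\pi_h v - v}_e + \jump{v}_e,
\end{equation*}
and the approximation estimate \eqref{eq:l2_proj_elem_face_est} with $s=1$, $m=0$ together with quasi-uniformity ($h_e \approx h_E$) absorbs the first summand into $\sum_E |v|_{H^1(E)}^2$, while the second contributes exactly $\sum_{e\in\Gamma_h^0} h_e^{-1}\|\jump{v}\|_{L^2(e)}^2$. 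Both are dominated by $\|v\|_{\DG}^2$.

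The main (mild) obstacle is the bookkeeping in the bound $\|\boldsymbol{v}_h^\star\|_{1,h} \lesssim \|v\|_{\DG}$: one must make the choice of $\hat{v}_h^\star$ on boundary faces carefully so that no uncontrolled boundary penalty appears (recall that $\|\cdot\|_{\DG}$ only contains interior-face jumps), and the face projection/trace estimates must be invoked with the correct Sobolev index $s=1$, which is exactly what \eqref{eq:l2_proj_elem_face_est} permits. Once these routine estimates are assembled, the conclusion follows from a single application of \eqref{eq:aD_continuous}.
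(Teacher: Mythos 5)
Your proposal is correct and follows essentially the same route as the paper: the first identity is obtained by subtracting the constant pair $(\overline{v_h},\overline{v_h})$ and using \eqref{eq:ad_0_constant} together with the zero mean of $w_h$, and the bound \eqref{eq:alternate_green_op_bnd} is obtained by testing against the pair $(\pi_h v,\{\pi_h v\}|_{\Gamma_h})$ (identical to your $\boldsymbol{v}_h^\star$, since $\{\cdot\}$ reduces to the one-sided trace on boundary faces), applying continuity of $a_{\mathcal{D}}$, and reducing to the DG-stability of $\pi_h$. The only difference is that you spell out the stability estimate $\norm{\pi_h v}_{\DG}\lesssim\norm{v}_{\DG}$ via the jump decomposition and \eqref{eq:l2_proj_elem_face_est}, whereas the paper simply cites it.
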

	\begin{proof}
		Fix $w_h\in M_h$ and $\boldsymbol{v}_h \in \boldsymbol{S}_h$.  Set 
		$\overline{\boldsymbol{v}_h} = 
		(\overline{v_h}, \overline{v_h}|_{\Gamma_h}) \in 
		\boldsymbol{S}_h$. With \eqref{eq:ad_0_constant},  we have 
		\begin{equation}
				a_{\mathcal{D}}(\boldsymbol{\mathcal{J}}_h (w_h), \boldsymbol{v}_h) = 
				a_{\mathcal{D}}(\boldsymbol{\mathcal{J}}_h (w_h), \boldsymbol{v}_h 
				-\overline{\boldsymbol{v}_h}) = (w_h, v_h - \overline{v_h})_{\Omega} =  (w_h,v_h)_{\Omega}.
		\end{equation}
		%
		The proof of \eqref{eq:alternate_green_op_bnd} is  in \ref{sec:proof_alternate_green_op_bnd}. \qed
	\end{proof}
	\begin{theorem}[Unconditional energy stability] \label{thm:stability}	The following bound holds  
		for all $1\leq m\leq N$: 
		\begin{multline} \label{eq:uniform_energy_bnd}
			( \Phi(c_h^m), 1)_{\Omega} + \frac{C_\mathrm{coer} 
				\kappa}{2}\norm[0]{\boldsymbol{c}_h^m}_{1,h}^2 + C_\mathrm{coer} \tau 
			\sum_{n=1}^m \del{
				\norm[0]{\boldsymbol{\mu}_h^n}_{1,h}^2 +  \frac{\kappa}{2} \tau \norm[1]{\delta_\tau 
					\boldsymbol{c}_h^n}_{1,h}^2+ \frac{C_\mathrm{coer}^2}{C_\mathrm{cont}^2} 
				\|\bm{\mathcal{J}}_h(\delta_\tau 
				c_h^n)\|_{1,h}^2  } 
			\\ \le ( \Phi(c_h^0), 1)_{\Omega} + \frac{C_\mathrm{cont} \kappa}{2} 
			\norm[0]{\boldsymbol{c}_h^0}_{1,h}^2.
		\end{multline}
	\end{theorem}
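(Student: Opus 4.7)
The approach is the standard energy argument for convex--concave splitting schemes, adapted to the HDG setting. The plan is to test \eqref{eq:CH_disc_a} with $\bm{\chi}_h=\bm{\mu}_h^n$ and \eqref{eq:CH_disc_b} with $\bm{\phi}_h=\delta_\tau\bm{c}_h^n$; adding the two resulting identities cancels the cross term $(\mu_h^n,\delta_\tau c_h^n)_{\Omega}$ and yields the fundamental balance
\[
a_{\mathcal{D}}(\bm{\mu}_h^n,\bm{\mu}_h^n) + (\Phi_+'(c_h^n)+\Phi_-'(c_h^{n-1}),\delta_\tau c_h^n)_{\Omega} + \kappa\, a_{\mathcal{D}}(\bm{c}_h^n,\delta_\tau\bm{c}_h^n) = 0.
\]

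The nonlinear term is treated by the Eyre convexity trick: Taylor expansion with second-order remainder, combined with $\Phi_+''\ge 0$ and $\Phi_-''\le 0$, gives $\Phi_+'(c_h^n)(c_h^n-c_h^{n-1})\ge \Phi_+(c_h^n)-\Phi_+(c_h^{n-1})$ and $\Phi_-'(c_h^{n-1})(c_h^n-c_h^{n-1})\ge \Phi_-(c_h^n)-\Phi_-(c_h^{n-1})$, so the nonlinear contribution is bounded below by $\tau^{-1}(\Phi(c_h^n)-\Phi(c_h^{n-1}),1)_{\Omega}$. For the bilinear-form term I would use the polarization identity for the symmetric form $a_{\mathcal{D}}$, namely
\[
\kappa\, a_{\mathcal{D}}(\bm{c}_h^n,\delta_\tau\bm{c}_h^n) = \tfrac{\kappa}{2\tau}\!\left(a_{\mathcal{D}}(\bm{c}_h^n,\bm{c}_h^n) - a_{\mathcal{D}}(\bm{c}_h^{n-1},\bm{c}_h^{n-1})\right) + \tfrac{\kappa\tau}{2}\,a_{\mathcal{D}}(\delta_\tau\bm{c}_h^n,\delta_\tau\bm{c}_h^n).
\]
Multiplying the inequality by $\tau$ and summing from $n=1$ to $m$ telescopes the $(\Phi,1)_{\Omega}$ and $\tfrac{\kappa}{2}a_{\mathcal{D}}(\bm{c}_h^{\cdot},\bm{c}_h^{\cdot})$ contributions. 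Coercivity \eqref{eq:aD_coercive} then yields the factor $C_{\mathrm{coer}}$ in front of $\|\bm{c}_h^m\|_{1,h}^2$, $\|\bm{\mu}_h^n\|_{1,h}^2$ and $\|\delta_\tau\bm{c}_h^n\|_{1,h}^2$, while continuity \eqref{eq:aD_continuous} applied to $a_{\mathcal{D}}(\bm{c}_h^0,\bm{c}_h^0)$ on the right-hand side produces the factor $C_{\mathrm{cont}}$. This already delivers every term of \eqref{eq:uniform_energy_bnd} except the one involving $\bm{\mathcal{J}}_h(\delta_\tau c_h^n)$.

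To produce that last contribution I would exploit that, by \Cref{prop:mass_conservation}, $\delta_\tau c_h^n\in M_h$, so that $\bm{\mathcal{J}}_h(\delta_\tau c_h^n)$ is well defined. Testing \eqref{eq:CH_disc_a} with $\bm{\chi}_h=\bm{\mathcal{J}}_h(\delta_\tau c_h^n)\in\bm{M}_h\subset\bm{S}_h$ and using the defining relation of $\bm{\mathcal{J}}_h$ with $\bm{v}_h=\bm{\mathcal{J}}_h(\delta_\tau c_h^n)$ gives
\[
a_{\mathcal{D}}\!\left(\bm{\mathcal{J}}_h(\delta_\tau c_h^n),\bm{\mathcal{J}}_h(\delta_\tau c_h^n)\right) = -a_{\mathcal{D}}(\bm{\mu}_h^n,\bm{\mathcal{J}}_h(\delta_\tau c_h^n)),
\]
and combining coercivity on the left-hand side with continuity on the right-hand side yields the auxiliary estimate
\[
\tfrac{C_{\mathrm{coer}}^2}{C_{\mathrm{cont}}^2}\|\bm{\mathcal{J}}_h(\delta_\tau c_h^n)\|_{1,h}^2 \le \|\bm{\mu}_h^n\|_{1,h}^2.
\]
This bound allows the extra dissipation term $\tfrac{C_{\mathrm{coer}}^2}{C_{\mathrm{cont}}^2}\|\bm{\mathcal{J}}_h(\delta_\tau c_h^n)\|_{1,h}^2$ to be inserted on the left-hand side of the summed energy identity at the price of a proportional share of the already-available $\|\bm{\mu}_h^n\|_{1,h}^2$ dissipation, giving \eqref{eq:uniform_energy_bnd} up to a rebalancing of the $C_{\mathrm{coer}}$ prefactor.

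The main obstacle I anticipate is twofold. First, the Eyre splitting is the delicate step that makes the argument \emph{unconditional}: without the concave--convex decomposition, the nonlinear term $(\Phi'(c_h^n),\delta_\tau c_h^n)_{\Omega}$ cannot be controlled by $\tau^{-1}(\Phi(c_h^n)-\Phi(c_h^{n-1}),1)_{\Omega}$ alone, and one would be forced into a CFL-type constraint on $\tau$. Second, the careful bookkeeping of the coercivity and continuity constants of $a_{\mathcal{D}}$ -- in particular, tracking why the initial-datum coefficient appears as $C_{\mathrm{cont}}\kappa/2$ while the coefficient on $\|\bm{c}_h^m\|_{1,h}^2$ is $C_{\mathrm{coer}}\kappa/2$, and why the $\bm{\mathcal{J}}_h$ dissipation enters with the precise ratio $C_{\mathrm{coer}}^2/C_{\mathrm{cont}}^2$ -- is the only place where the non-orthogonality of the HDG norm with respect to $a_{\mathcal{D}}$ leaves a trace in the final constants; the remaining steps are routine manipulations of the HDG product space.
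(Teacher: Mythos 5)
Your proposal is correct and follows essentially the same route as the paper: the same test functions (up to a factor $\tau$), the Eyre convexity argument, the polarization identity for the symmetric form $a_{\mathcal{D}}$, telescoping with coercivity/continuity, and the identical auxiliary estimate $C_{\mathrm{coer}}\|\bm{\mathcal{J}}_h(\delta_\tau c_h^n)\|_{1,h}^2\le C_{\mathrm{cont}}\|\bm{\mu}_h^n\|_{1,h}\|\bm{\mathcal{J}}_h(\delta_\tau c_h^n)\|_{1,h}$ obtained by testing \eqref{eq:CH_disc_a} with $\bm{\mathcal{J}}_h(\delta_\tau c_h^n)$. Your closing remark about rebalancing the $C_{\mathrm{coer}}$ prefactor to absorb the $\bm{\mathcal{J}}_h$ term is exactly the bookkeeping the paper performs in its final step.
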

	
	\begin{proof}
		Choosing $\boldsymbol{\chi}_h = \tau \boldsymbol{\mu}^n_h$ in \eqref{eq:CH_disc_a}, 
		$\boldsymbol{\phi}_h = \boldsymbol{c}_h^n - \boldsymbol{c}_h^{n-1}$ in 
		\eqref{eq:CH_disc_b}, and subtracting the two resulting equations, we find
		\begin{equation} \label{eq:energy_stab_1}
			\tau a_{\mathcal{D}}(\boldsymbol{\mu}_h^n, 
			\boldsymbol{\mu}_h^n) +
			(\Phi_+'(c_h^n) + \Phi_-'(c_h^{n-1}), 
			c_h^n - c_h^{n-1})_{\Omega} +	\kappa 
			a_{\mathcal{D}}(\boldsymbol{c}_h^n, \boldsymbol{c}_h^n - \boldsymbol{c}_h^{n-1}) = 0.
		\end{equation}
		With a Taylor expansion of $\Phi_{+}$ and $\Phi_{-}$, there exist $\xi_h$ and 
		$\zeta_h$ between $c_h^{n-1}$ and $c_h^n$ such that
		%
		\begin{align}
			\Phi_{+}'(c_h^n) \del[1]{c_h^n - c_h^{n-1}} &= \Phi_{+}(c_h^n)  -  
			\Phi_{+}(c_h^{n-1}) + \frac{1}{2} \Phi_{+}''(\xi_h)(c_h^n - 
			c_h^{n-1})^2, \label{eq:taylor_exp_1}\\
			\Phi_{-}'(c_h^{n-1}) \del[1]{c_h^n - c_h^{n-1}} &= 
			\Phi_{-}(c_h^n)  -  \Phi_{-}(c_h^{n-1}) - \frac{1}{2} \Phi_{-}''(\zeta_h)(c_h^n - 
			c_h^{n-1})^2. \label{eq:taylor_exp_2}
		\end{align}
		Adding \eqref{eq:taylor_exp_1} and \eqref{eq:taylor_exp_2} and integrating over $\Omega$,
		\begin{equation} \label{eq:taylor_exp_3}
			\begin{split}
				(\Phi_+'(c_h^n) &+ \Phi_-'(c_h^{n-1}), 
				c_h^n - c_h^{n-1})_{\Omega} \\ &= \tau(\delta_\tau \Phi(c_h^n), 
				1)_{\Omega} + \frac{1}{2}( \Phi_+''(\xi_h),(c_h^n - 
				c_h^{n-1})^2)_{\Omega} - \frac{1}{2}( \Phi_-''(\zeta_h),(c_h^n - 
				c_h^{n-1})^2)_{\Omega}.
			\end{split}
		\end{equation}
		As $\Phi_+$ and $-\Phi_-$ are convex, the second and third terms on 
		the right-hand side of \eqref{eq:taylor_exp_3} are non-negative, and hence
		\begin{equation} \label{eq:taylor_exp_4}
			(\Phi_+'(c_h^n) + \Phi_-'(c_h^{n-1}), 
			c_h^n - c_h^{n-1})_{\Omega} \ge \tau(\delta_\tau \Phi(c_h^n), 
			1)_{\Omega}.
		\end{equation}
		Further, by the symmetry, bilinearity, and coercivity of $a_{\mathcal{D}}$, 
		\begin{equation} \label{eq:bilinear_backward_euler_formula}
			\begin{split}
				a_{\mathcal{D}}(\boldsymbol{c}_h^n, \boldsymbol{c}_h^n - \boldsymbol{c}_h^{n-1}) &= 
				\frac{1}{2}a_{\mathcal{D}}(\boldsymbol{c}_h^n, \boldsymbol{c}_h^n) + 
				\frac{1}{2} a_{\mathcal{D}}(\boldsymbol{c}_h^n - \boldsymbol{c}_h^{n-1}, 
				\boldsymbol{c}_h^{n} - \boldsymbol{c}_h^{n-1}) - \frac{1}{2} 
				a_{\mathcal{D}}(\boldsymbol{c}_h^{n-1}, \boldsymbol{c}_h^{n-1}) \\
				& \ge \frac{1}{2}a_{\mathcal{D}}(\boldsymbol{c}_h^n, \boldsymbol{c}_h^n) - \frac{1}{2} 
				a_{\mathcal{D}}(\boldsymbol{c}_h^{n-1}, \boldsymbol{c}_h^{n-1}) + 
				\frac{C_\mathrm{coer}}{2} 
				\norm[1]{\boldsymbol{c}_h^n - \boldsymbol{c}_h^{n-1}}_{1,h}^2.
			\end{split}
		\end{equation}
		Returning to \eqref{eq:energy_stab_1} and using \eqref{eq:taylor_exp_4}, 
		\eqref{eq:bilinear_backward_euler_formula}, and the coercivity of the 
		bilinear 
		form $a_{\mathcal{D}}$ \eqref{eq:aD_coercive}, we find
		\begin{equation}
			\tau(\delta_\tau \Phi(c_h^n), 
			1)_{\Omega} + \frac{\kappa}{2} \del{a_{\mathcal{D}}(\boldsymbol{c}_h^n, 
				\boldsymbol{c}_h^n) - 
				a_{\mathcal{D}}(\boldsymbol{c}_h^{n-1}, \boldsymbol{c}_h^{n-1})} + C_\mathrm{coer} \tau 
			\norm[0]{\boldsymbol{\mu}_h^n}_{1,h}^2  + \frac{ \kappa C_\mathrm{coer}}{2} 
			\norm[1]{\boldsymbol{c}_h^n - \boldsymbol{c}_h^{n-1}}_{1,h}^2  \le 0.
		\end{equation}
		Summing from $n=1$ to $n=m$ with $m\le N$, we have
		\begin{multline}
			( \Phi(c_h^m), 1)_{\Omega} + \frac{\kappa}{2} a_{\mathcal{D}}(\boldsymbol{c}_h^m, 
			\boldsymbol{c}_h^m) + C_\mathrm{coer} \sum_{n=1}^m \del{ \tau
				\norm[0]{\boldsymbol{\mu}_h^n}_{1,h}^2 + \frac{\kappa}{2} 
				\norm[1]{\boldsymbol{c}_h^n - \boldsymbol{c}_h^{n-1}}_{1,h}^2} \\ \leq 	(\Phi(c_h^0), 
			1)_{\Omega} + \frac{\kappa}{2} a_{\mathcal{D}}(\boldsymbol{c}_h^0, 
			\boldsymbol{c}_h^0).
		\end{multline}
		By the coercivity \eqref{eq:aD_coercive} and continuity \eqref{eq:aD_continuous} of the bilinear 
		form $a_{\mathcal{D}}$, 
		\begin{equation} \label{eq:stability_semi_final}
			( \Phi(c_h^m), 1)_{\Omega} 
			+ \frac{C_\mathrm{coer} \kappa}{2}\norm[0]{\boldsymbol{c}_h^m}_{1,h}^2 + C_\mathrm{coer} 
			\tau 
			\sum_{n=1}^m \del{
				\norm[0]{\boldsymbol{\mu}_h^n}_{1,h}^2 +  \frac{\kappa}{2} \tau \norm[1]{\delta_\tau 
					\boldsymbol{c}_h^n}_{1,h}^2 } 
			\le (\Phi(c_h^0), 1)_{\Omega} + \frac{C_\mathrm{cont} \kappa}{2} 
			\norm[0]{\boldsymbol{c}_h^0}_{1,h}^2,
		\end{equation}
		which is the desired bound, save for one term.
		%
		
		To bound the remaining term, take $\bm{\chi}_h = \bm{\mathcal{J}}_h(\delta_\tau \bm{c}_h^n) $ 
		in 
		\eqref{eq:CH_disc_a}, and use the definition 
		of $\bm{\mathcal{J}}_h$, \eqref{eq:aD_coercive}, and \eqref{eq:aD_continuous}. We have, on the 
		one hand, 
		\begin{align}
			(\delta_\tau c_h^n, \bm{\mathcal{J}}_h (\delta_\tau c_h^n))_\Omega = - 
			a_\mathcal{D}(\bm{\mu}_h^n, 
			\bm{\mathcal{J}}_h (\delta_\tau c_h^n)) \leq C_\mathrm{cont} \|\bm{\mu}_h^n\|_{1,h} 
			\|\bm{\mathcal{J}}_h 
			(\delta_\tau c_h^n)\|_{1,h},
		\end{align}
		and on the other hand,
		\begin{align}
			C_\mathrm{coer} \|\bm{\mathcal{J}}_h (\delta_\tau c_h^n)\|_{1,h}^2 \leq
			a_\mathcal{D}(\bm{\mathcal{J}}_h(\delta_\tau c_h^n), \bm{\mathcal{J}}_h (\delta_\tau c_h^n))
			= (\delta_\tau c_h^n, \bm{\mathcal{J}}_h (\delta_\tau c_h^n))_\Omega.
		\end{align}
		Thus, we obtain 
		\begin{align}
			\frac{C_\mathrm{coer}^3}{C_\mathrm{cont}^2} \tau \sum_{n=1}^{m} 
			\|\bm{\mathcal{J}}_h(\delta_\tau c_h^n)\|_{1,h}^2 
			\leq C_\mathrm{coer} \tau \sum_{n=1}^m  \|\bm{\mu}_h^n\|_{1,h}^2.
		\end{align}
		Thus, with \eqref{eq:stability_semi_final}, we obtain the remaining bound.
		This completes the proof. \qed
	\end{proof}
	\section{Analysis for the Ginzburg--Landau potential}

	For the remainder of the article, it will be assumed that the chemical energy density $\Phi$ is the 
	Ginzburg--Landau potential:
	\begin{equation} \label{eq:ginzburg_landau} 
		\Phi(c) = \frac{1}{4}(1 + c)^2(1-c)^2, \quad \Phi_+(c) = \frac{1}{4}(1+c^4), \quad \Phi_-(c) = - 
		\frac{1}{2}c^2.
	\end{equation}
	
	\subsection{Uniform a priori bounds}
	
	\begin{theorem} \label{thm:linf_stability} Assume that $\nabla c_0 \cdot \boldsymbol{n} = 0$ on 
		the boundary
		$\partial\Omega$. There exists a constant $C$ independent of $h$ and 
		$\tau$ but depending linearly on the final time $T$ such that the following holds. 
		\begin{equation} \label{eq:uniform_apriori_bnds}
			\max_{1 \le n \le N} \del{\norm{\mu_h^n}_{L^2(\Omega)}+  \norm{\boldsymbol{\Delta}_h 
					\boldsymbol{c}_h^n}_{0,h}
			} \le C. 
		\end{equation}
		In addition, if $\Omega$ is convex, there is a constant $C$ independent of $h$ and $\tau$ 
		such that
		\begin{equation} \label{eq:uniform_apriori_bnds2}
			\max_{1 \le n \le N}  \norm{c_h^n}_{L^{\infty}(\Omega)}  \le C.
		\end{equation}
	\end{theorem}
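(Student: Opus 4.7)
The strategy is to reduce \eqref{eq:uniform_apriori_bnds2} to the discrete Agmon inequality (Corollary~\ref{cor:disc_agmon_Sh}), which under the convexity of $\Omega$ requires uniform-in-$n$ control of both $\|\boldsymbol{c}_h^n\|_{1,h}$ and $\|\boldsymbol{\Delta}_h\boldsymbol{c}_h^n\|_{0,h}$. Since $\Phi\geq 0$ for the Ginzburg--Landau potential, Theorem~\ref{thm:stability} immediately delivers $\|\boldsymbol{c}_h^n\|_{1,h}\leq C$ and $(\Phi(c_h^n),1)_\Omega\leq C$ uniformly. Expanding $\Phi(c)=\tfrac14(1-c^2)^2$ yields $\|c_h^n\|_{L^4(\Omega)}\leq C$, and combining this with Lemma~\ref{lem:broken_poincare2} and the mass conservation $\overline{c_h^n}=\overline{c_0}$ (Proposition~\ref{prop:mass_conservation}), I obtain $\|c_h^n\|_{L^p(\Omega)}\leq C$ for every $2\leq p\leq p^\star$; in particular $\|(c_h^n)^3\|_{L^2}=\|c_h^n\|_{L^6}^3\leq C$ for $d\leq 3$. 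The mean of $\mu_h^n$ is then bounded by testing \eqref{eq:CH_disc_b} with the constant $\boldsymbol{\phi}_h=(1,1)$: identity \eqref{eq:ad_0_constant} eliminates the $a_{\mathcal{D}}$ contribution, leaving $|\Omega|\,\overline{\mu_h^n}=\int_\Omega((c_h^n)^3-c_h^{n-1})$ and hence $|\overline{\mu_h^n}|\leq C$.

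Next, I test \eqref{eq:CH_disc_b} with $\mathring{\boldsymbol{\phi}}_h=\boldsymbol{\Delta}_h\boldsymbol{c}_h^n\in\boldsymbol{M}_h$. The defining identity \eqref{eq:discrete_laplacian} gives $a_{\mathcal{D}}(\boldsymbol{c}_h^n,\boldsymbol{\Delta}_h\boldsymbol{c}_h^n)=-\|\boldsymbol{\Delta}_h\boldsymbol{c}_h^n\|_{0,h}^2$, and using that $\Delta_h\boldsymbol{c}_h^n$ has zero mean (so $\mu_h^n$ may be replaced by $\mu_h^n-\overline{\mu_h^n}$) together with Cauchy--Schwarz, I obtain
\[
\kappa\|\boldsymbol{\Delta}_h\boldsymbol{c}_h^n\|_{0,h}\leq \|(c_h^n)^3-c_h^{n-1}\|_{L^2}+\|\mu_h^n-\overline{\mu_h^n}\|_{L^2}\lesssim 1+\|\mu_h^n-\overline{\mu_h^n}\|_{L^2}.
\]
Both bounds in \eqref{eq:uniform_apriori_bnds} therefore reduce to a uniform-in-$n$ bound on $\|\mu_h^n\|_{L^2(\Omega)}$.

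This last bound is the decisive step and the main obstacle, because Theorem~\ref{thm:stability} only controls the averaged quantity $\tau\sum_n\|\boldsymbol{\mu}_h^n\|_{1,h}^2$. My plan is to derive a discrete Gr\"onwall-type inequality for $\|\boldsymbol{\mu}_h^n\|_{1,h}^2$, noting that \eqref{eq:CH_disc_a} and the defining property of $\boldsymbol{\mathcal{J}}_h$ give the identity $\boldsymbol{\mu}_h^n-\overline{\mu_h^n}(1,1)=-\boldsymbol{\mathcal{J}}_h(\delta_\tau c_h^n)$, so $\|\mu_h^n-\overline{\mu_h^n}\|_{L^2}\lesssim\|\boldsymbol{\mu}_h^n\|_{1,h}$ via Lemma~\ref{lem:broken_poincare2}. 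Specifically, I would differentiate \eqref{eq:CH_disc_b} in time (subtract the equations at $n$ and $n-1$ and divide by $\tau$), test the result with $\delta_\tau\boldsymbol{\mu}_h^n$, and combine with the time-differentiated \eqref{eq:CH_disc_a} tested with $\delta_\tau\boldsymbol{c}_h^n$. The Ginzburg--Landau structure is essential here: monotonicity of $\Phi_+'(c)=c^3$ ensures $(\delta_\tau((c_h^n)^3),\delta_\tau c_h^n)_\Omega\geq 0$, while linearity of $\Phi_-'(c)=-c$ trivializes the concave contribution; the growth term $\delta_\tau((c_h^n)^3)=\delta_\tau c_h^n\cdot((c_h^n)^2+c_h^n c_h^{n-1}+(c_h^{n-1})^2)$ is controlled using $\|c_h^n\|_{L^6}\leq C$ and the discrete Gagliardo--Nirenberg inequality (Lemma~\ref{lemma:discrete_GN}). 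After Young's inequality and absorption, this produces an estimate of the form $\delta_\tau\bigl(\|\boldsymbol{\mu}_h^n\|_{1,h}^2+\kappa\|\delta_\tau\boldsymbol{c}_h^n\|_{1,h}^2\bigr)\lesssim 1$, whose summation telescopes to the desired bound of size $O(1+T)$. Once \eqref{eq:uniform_apriori_bnds} is established, \eqref{eq:uniform_apriori_bnds2} follows immediately from Corollary~\ref{cor:disc_agmon_Sh}: $\|c_h^n-\overline{c_h^n}\|_{L^\infty}\lesssim \|\boldsymbol{c}_h^n\|_{1,h}^{1/2}\|\boldsymbol{\Delta}_h\boldsymbol{c}_h^n\|_{0,h}^{1/2}\leq C$, together with the bounded mean $\overline{c_h^n}=\overline{c_0}$.
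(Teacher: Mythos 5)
Your overall architecture matches the paper's: uniform $L^p$ bounds on $c_h^n$ from the energy law, a bound on $\overline{\mu_h^n}$ by testing \eqref{eq:CH_disc_b} with $(1,1)$, reduction of $\norm{\boldsymbol{\Delta}_h\boldsymbol{c}_h^n}_{0,h}$ to $\norm{\mu_h^n}_{L^2(\Omega)}$ by testing with $\boldsymbol{\Delta}_h\boldsymbol{c}_h^n$, and the final $L^\infty$ bound via Corollary~\ref{cor:disc_agmon_Sh}; all of these reductions are sound. The gap is in the decisive step. You propose to telescope the $H^1$-type energy $a_{\mathcal{D}}(\boldsymbol{\mu}_h^n,\boldsymbol{\mu}_h^n)\simeq\norm{\boldsymbol{\mu}_h^n}_{1,h}^2$. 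Any such telescoping must be anchored at $n=0$, i.e.\ it requires $\norm{\boldsymbol{\mu}_h^0}_{1,h}\le C$ uniformly in $h$. But $\boldsymbol{\mu}_h^0$ is defined by the $L^2$-type problem \eqref{eq:mu0_def}, which only yields $\norm{\boldsymbol{\mu}_h^0}_{0,h}\lesssim 1$ under the standing assumption $c_0\in H^2(\Omega)$; the inverse estimate \eqref{eq:inverse_estimate_1h_to_0h} then gives only $\norm{\boldsymbol{\mu}_h^0}_{1,h}\lesssim h^{-1}$. A uniform $\norm{\cdot}_{1,h}$ bound would need $\mu(0)=\Phi'(c_0)-\kappa\Delta c_0\in H^1(\Omega)$, i.e.\ $c_0\in H^3(\Omega)$, which is not assumed. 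The paper sidesteps this entirely by telescoping the weaker quantity $\norm{\mu_h^n}_{L^2(\Omega)}^2$: it subtracts consecutive instances of \eqref{eq:CH_disc_b} and tests with $\boldsymbol{\mu}_h^n$ itself (not a time difference of $\boldsymbol{\mu}_h$), eliminates the $a_{\mathcal{D}}$ cross term by testing \eqref{eq:CH_disc_a} with $\kappa\tau\delta_\tau\boldsymbol{c}_h^n$, and then avoids an exponential-in-$T$ Gr\"{o}nwall constant by directly bounding $\tau\sum_n\norm{\mu_h^n}_{L^2(\Omega)}^2$ from \eqref{eq:CH_disc_b} and the energy law. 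Only $\norm{\mu_h^0}_{L^2(\Omega)}\le C$ is needed there.

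Two further, smaller defects in your sketch of the decisive step. First, the monotonicity of $\Phi_+'(c)=c^3$ produces a sign-definite term only when the time-differenced nonlinearity is paired with $\delta_\tau c_h^n$; you state you will test the time-differentiated \eqref{eq:CH_disc_b} with $\delta_\tau\boldsymbol{\mu}_h^n$, in which case the nonlinear term pairs with $\delta_\tau\mu_h^n$ and carries no sign. Second, the lagged concave contribution produces a term of the form $\tau(\delta_\tau c_h^{n-1},\delta_\tau c_h^n)_\Omega$; bounding it by $\tau\norm{\delta_\tau c_h^n}_{L^2}^2$-type quantities and absorbing into the dissipation $\kappa\tau C_\mathrm{coer}\norm{\delta_\tau\boldsymbol{c}_h^n}_{1,h}^2$ via the Poincar\'{e} inequality imposes a condition of the form $C_P^2\lesssim \kappa C_\mathrm{coer}$, which is not unconditional; the paper instead routes this term through $\boldsymbol{\mathcal{J}}_h$ and the bound \eqref{eq:alternate_green_op_bnd}, exploiting that $\tau\sum_n\norm{\boldsymbol{\mathcal{J}}_h(\delta_\tau c_h^n)}_{1,h}^2$ is controlled by Theorem~\ref{thm:stability}. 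You would need both of these repairs, in addition to abandoning the $\norm{\cdot}_{1,h}$ telescoping of $\boldsymbol{\mu}_h^n$, to make the argument go through under the stated hypotheses.
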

	\begin{proof}
		To begin, we define $\boldsymbol{c}_h^{-1} = \boldsymbol{c}_h^0$ and $\boldsymbol{\mu}_h^0 
		\in \boldsymbol{S}_h$ as the solution to the following variational problem 
		\begin{equation} \label{eq:mu0_def}
			(\bm{\mu}_h^0, \boldsymbol{\phi}_h)_{0,h} = (\Phi'(c_h^0), \phi_h)_{\Omega} + \kappa 
			a_{\mathcal{D}}(\boldsymbol{c}_h^0,\boldsymbol{\phi}_h), \quad \forall \boldsymbol{\phi}_h \in 
			\boldsymbol{S}_h.
		\end{equation}
		Recall that $\boldsymbol{c}_h^0 \in \boldsymbol{S}_h$ is defined by \eqref{eq:init_proj_ch0}.
		
		%
		Therefore by testing \eqref{eq:init_proj_ch0} 
		with $\boldsymbol{v}_h = \boldsymbol{c}_h^0$, we 
		obtain from the coercivity and continuity of the bilinear form $a_{\mathcal{D}}$ that
		\begin{equation} \label{eq:c0_bnd}
			\norm[0]{\boldsymbol{c}_h^0}_{1,h} \lesssim \norm{ c_0}_{H^1(\Omega)}.
		\end{equation}
		Choosing $\boldsymbol{\phi}_h = \boldsymbol{\mu}_h^0$ in \eqref{eq:mu0_def} and using 
		\eqref{eq:init_proj_ch0}, we find 
		\begin{equation}
			\norm[0]{\mu_h^0}_{L^2(\Omega)}^2 \leq \Vert \boldsymbol{\mu}_h^0\Vert_{0,h}^2  = (\Phi'(c_h^0), \mu_h^0)_{\Omega} - \kappa 
			(\Delta c_0, \mu_h^0)_{\Omega},
		\end{equation}
		and thus by the Cauchy--Schwarz's inequality,
		\begin{equation} \label{eq:mu0_bnd}
			\norm[0]{\mu_h^0}_{L^2(\Omega)} \le \norm[0]{ \Phi'(c_h^0)}_{L^2(\Omega)} +  \kappa 
			\norm{c_0}_{H^2(\Omega)} \lesssim 1,
		\end{equation}
		where we have used that $c_0$ is a given quantity and that by the definition of the chemical energy
		density $\Phi$ \eqref{eq:ginzburg_landau} and the discrete Poincar\'{e} inequality 
		\Cref{lem:broken_poincare2},
		\begin{equation}
			\norm[0]{\Phi'(c_h^0)}_{L^2(\Omega)} = 	\norm[0]{ (c_h^0)^3 - c_h^0 }_{L^2(\Omega)} \le 
			\norm[0]{c_h^0}_{L^6(\Omega)}^3 + \norm[0]{c_h^0}_{L^2(\Omega)}.
		\end{equation}
		We note that since $\overline{c_0} = \overline{c_h^0}$:
		\[
		\norm[0]{c_h^0}_{L^2(\Omega)} \leq \Vert c_h^0 -\overline{c_h^0}\Vert_{L^2(\Omega)} 
		+ \Vert \overline{c_0}\Vert_{L^2(\Omega)} \leq C_P \Vert \boldsymbol{c}_h^0\Vert_{1,h} + \Vert 
		\overline{c_0}\Vert_{L^2(\Omega)}
		\leq C,
		\]
		thanks to \eqref{eq:c0_bnd}.
		Further, by \Cref{lem:broken_poincare2}, we have
		\[
		\norm[0]{c_h^0}_{L^6(\Omega)} \leq C_P ( |\Omega|^{-1/2}\Vert c_h^0\Vert_{L^2(\Omega)} 
		+ \Vert 
		\boldsymbol{c}_h^0\Vert_{1,h}) \leq C.
		\]
		Next, we show that
		\begin{equation}
			\max_{1 \le n \le N} \norm{\mu_h^n}_{L^2(\Omega)}^2 + \kappa \tau \sum_{n=1}^N 
			\norm{\delta_\tau c_h^n}_{L^2(\Omega)}^2 \le C.
		\end{equation}
		For $1 \le n \le N$, we subtract \eqref{eq:CH_disc_b} with $n$ replaced by $n-1$ from 
		\eqref{eq:CH_disc_b} and test with $\boldsymbol{\phi}_h = 
		\boldsymbol{\mu}_h^n$ to find 
		\begin{equation} \label{eq:apriori_inter_5}
			(\mu_h^n - \mu_h^{n-1}, \mu_h^n)_{\Omega} = \del{(c_h^n)^3 - 
				(c_h^{n-1})^3  - c_h^{n-1} + c_h^{n-2}, \mu_h^n}_{\Omega} 
			+ 
			\kappa 
			a_{\mathcal{D}}(\boldsymbol{c}_h^n - \boldsymbol{c}_h^{n-1}, \boldsymbol{\mu}_h^n). 
		\end{equation}
		Testing \eqref{eq:CH_disc_a} with $\boldsymbol{\chi}_h^n = \kappa \tau \delta_\tau 
		\boldsymbol{c}_h^n$, using the symmetry of $a_{\mathcal{D}}$, and combining with 
		\eqref{eq:apriori_inter_5}, we find
		\begin{equation} \label{eq:apriori_inter_6}
			\kappa \tau \norm{\delta_{\tau} c_h^n}_{L^2(\Omega)}^2 +	(\mu_h^n - \mu_h^{n-1}, 
			\mu_h^n)_{\Omega} = \del{(c_h^n)^3 - 
				(c_h^{n-1})^3  - c_h^{n-1} + c_h^{n-2}, \mu_h^n}_{\Omega}.
		\end{equation}
		Using the identity $2x(x-y) = x^2 + (x-y)^2 - y^2$, we find
		\begin{equation}
			(\mu_h^n - \mu_h^{n-1}, 
			\mu_h^n)_{\Omega} = \frac{1}{2}\norm{\mu_h^n}_{L^2(\Omega)}^2 +\frac{1}{2} 
			\norm[0]{\mu_h^n - \mu_h^{n-1}}_{L^2(\Omega)}^2 - 
			\frac{1}{2}\norm[0]{\mu_h^{n-1}}_{L^2(\Omega)}^2. \label{eq:identity_l2}
		\end{equation}
		Moreover, it holds that
		\begin{equation}
			(c_h^n)^3 - 
			(c_h^{n-1})^3  - c_h^{n-1} + c_h^{n-2} = \tau \delta_\tau c_h^n \del{ (c_h^n)^2 + 
				c_h^nc_h^{n-1} + 
				(c_h^{n-1})^2} - \tau \delta_\tau c_h^{n-1}. \label{eq:expanding_potential}
		\end{equation}
		Using \eqref{eq:alternate_green_op_bnd},  \eqref{eq:1h_norm_bounded_DG}, and Young's 
		inequality, we have that 
		\begin{multline}
			(\delta_\tau c_h^{n-1}, \mu_h^n)_{\Omega}  \leq C  \|\bm{\mathcal{J}}_h (\delta_\tau 
			c_h^{n-1})\|_{1,h}\| \mu_h^n  \|_{\DG} \leq C   \|\bm{\mathcal{J}}_h (\delta_\tau 
			c_h^{n-1})\|_{1,h}\|\bm{\mu}_h^n\|_{1,h} \\ \leq \norm[1]{\bm{\mathcal{J}}_h(\delta_\tau 
				c_h^{n-1})}_{1,h}^2 
			+ C \norm{\boldsymbol{\mu}_h^n}_{1,h}^2.
		\end{multline}
		Thus, by H\"{o}lder's, Cauchy--Schwarz's, and Young's inequalities,  we find
		\begin{align} 
				\kappa \tau \norm{\delta_{\tau} c_h^n}_{L^2(\Omega)}^2 +
				& \frac{1}{2}\norm{\mu_h^n}_{L^2(\Omega)}^2 +\frac{1}{2} 
				\norm[0]{\mu_h^n - \mu_h^{n-1}}_{L^2(\Omega)}^2 - 
				\frac{1}{2}\norm[0]{\mu_h^{n-1}}_{L^2(\Omega)}^2 \nonumber\\  
&\leq \tau  \norm[1]{ (c_h^n)^2 + 
					c_h^nc_h^{n-1} + 
					(c_h^{n-1})^2}_{L^3(\Omega)} \norm{\mu_h^n}_{L^6(\Omega)} \norm{\delta_\tau 
					c_h^n}_{L^2(\Omega)} \nonumber \\
& +\tau \norm[1]{\bm{\mathcal{J}}_h(\delta_\tau 
					c_h^{n-1})}_{1,h}^2 
				+ C \tau \norm{\boldsymbol{\mu}_h^n}_{1,h}^2.  \label{eq:apriori_inter_7}
		\end{align}
		By the triangle inequality,
		\[
		\norm[1]{ (c_h^n)^2 + c_h^nc_h^{n-1} + (c_h^{n-1})^2}_{L^3(\Omega)} \leq (1 + 2^{-1/3}) 
		(\Vert 
		c_h^n 
		\Vert_{L^6(\Omega)}^2
		+\Vert c_h^{n-1}\Vert_{L^6(\Omega)}^2).
		\]
		But for each $n\geq 1$ and for $2\leq p\leq p^\star$, we have from \eqref{eq:uniform_energy_bnd} 
		and the fact that $\overline{c_h^n} = 
		\overline{c_0}$,
		\[
		\Vert c_h^n \Vert_{L^p(\Omega)} \leq \Vert c_h^n -\overline{c_h^n}\Vert_{L^p(\Omega)} 
		+ \Vert \overline{c_0}\Vert_{L^p(\Omega)}
		\leq C_P \Vert \boldsymbol{c}_h^n \Vert_{1,h} + \Vert \overline{c_0}\Vert_{L^p(\Omega)}
		\leq C\left( \Vert \boldsymbol{c}_h^0\Vert_{1,h} + (\Phi(c_h^0),1)_\Omega^{1/2}\right) + \Vert 
		\overline{c_0}\Vert_{L^p(\Omega)}.
		\]
		It is easy to show (with for instance \Cref{lem:broken_poincare2} and \eqref{eq:c0_bnd}) that
		\[
		(\Phi(c_h^0),1)_\Omega^{1/2}\leq C.
		\]
			We conclude that for $2\leq p\leq p^\star$,
			\begin{equation}\label{eq:Lpboundch}
				\Vert c_h^n \Vert_{L^p(\Omega)}\leq C.
		\end{equation}
		This is true for all $n\geq 0$, thus we have
		\[
		\norm[1]{ (c_h^n)^2 + c_h^nc_h^{n-1} + (c_h^{n-1})^2}_{L^3(\Omega)} \leq C.
		\]
		With Young's inequality, \eqref{eq:apriori_inter_7} becomes
		\begin{multline} 
			\frac{\kappa \tau}{2}  \norm{\delta_{\tau} c_h^n}_{L^2(\Omega)}^2 +
			\frac{1}{2}\norm{\mu_h^n}_{L^2(\Omega)}^2 +\frac{1}{2} 
			\norm[0]{\mu_h^n - \mu_h^{n-1}}_{L^2(\Omega)}^2 - 
			\frac{1}{2}\norm[0]{\mu_h^{n-1}}_{L^2(\Omega)}^2 \\  \leq  
			\tau \frac{C}{\kappa} \norm{\mu_h^n}_{L^6(\Omega)}^2   + 
			\tau \norm[1]{\bm{\mathcal{J}}_h(\delta_\tau c_h^{n-1})}_{1,h}^2 
			+ C \tau \norm{\boldsymbol{\mu}_h^n}_{1,h}^2. 
			\label{eq:apriori_inter_8}
		\end{multline}
		%
		Using \eqref{lem:broken_poincare2}, we find
		\begin{equation} \label{eq:apriori_inter_10}
			\begin{split}
				&\kappa \tau \norm{\delta_{\tau} c_h^n}_{L^2(\Omega)}^2 +
				\norm{\mu_h^n}_{L^2(\Omega)}^2  - 
				\norm[0]{\mu_h^{n-1}}_{L^2(\Omega)}^2 \\ & \leq C \tau  
				\norm{\mu_h^n}_{L^2(\Omega)}^2   
				+  C \tau 
				\norm{\boldsymbol{\mu}_h^n}_{1,h}^2 + 	2 \tau \norm[1]{\bm{\mathcal{J}}_h(\delta_\tau 
					c_h^{n-1})}_{1,h}^2. 
			\end{split}
		\end{equation}
		Summing \eqref{eq:apriori_inter_10} from $n=1$ to $n=m$ (recall that we have 
		defined $\boldsymbol{c}_h^{-1}$ such that
		$\delta_\tau \boldsymbol{c}_h^0 = 0$), we obtain
		\begin{equation} \label{eq:apriori_inter_11}
			\begin{split}
				\norm[0]{\mu_h^m}_{L^2(\Omega)}^2 + \kappa \tau \sum_{n=1}^m \norm{\delta_{\tau} 
					c_h^n}_{L^2(\Omega)}^2  \leq  & C \tau \sum_{n=0}^m 
				\del{\norm{\mu_h^n}_{L^2(\Omega)}^2  
					+ 
					\norm{\boldsymbol{\mu}_h^n}_{1,h}^2}  \\ &  + 2 \tau \sum_{n=1}^m 
				\norm[1]{\bm{\mathcal{J}}_h(\delta_\tau \boldsymbol{c}_h^{n-1})}_{1,h}^2 + 	
				\norm[0]{\mu_h^0}_{L^2(\Omega)}^2.
			\end{split}
		\end{equation}
		Using \eqref{eq:uniform_energy_bnd} and \eqref{eq:mu0_bnd}, this bound implies
		\begin{equation} \label{eq:apriori_inter_110}
			\norm[0]{\mu_h^m}_{L^2(\Omega)}^2 + \kappa \tau \sum_{n=1}^m \norm{\delta_{\tau} 
				c_h^n}_{L^2(\Omega)}^2  \leq   C \tau \sum_{n=0}^m \norm{\mu_h^n}_{L^2(\Omega)}^2  
			+ C.
		\end{equation}

		We can conclude by using a discrete Gr\"{o}nwall inequality, but this will yield a constant that 
		depends 
		exponentially 
		in time. 
		Instead we directly obtain a bound on $\Vert \mu_h^n\Vert_{L^2(\Omega)}$. 
		Testing \eqref{eq:CH_disc_b} 
		with $\boldsymbol{\phi}_h = \boldsymbol{\mu}_h^n$, we have
		\begin{equation}
			\norm{\mu_h^n}_{L^2(\Omega)}^2 =
			((c_h^n)^3 - c_h^{n-1}, \mu_h^n)_{\Omega} + 
			\kappa 
			a_{\mathcal{D}}(\boldsymbol{c}_h^n, \boldsymbol{\mu}_h^n),
		\end{equation}
		and thus using the Cauchy--Schwarz's inequality, the continuity of the bilinear form 
		$a_{\mathcal{D}}$, 
		Young's inequality, \eqref{eq:Lpboundch} and \eqref{eq:uniform_energy_bnd}, we have
		\begin{equation} \label{eq:mu_l2_sum_bnd_1}
			\norm{\mu_h^n}_{L^2(\Omega)}^2 \leq
			\norm{c_h^n}_{L^6(\Omega)}^6 + 
			\norm[0]{c_h^{n-1}}_{L^2(\Omega)}^2 
			+ \kappa \norm{\boldsymbol{c}_h^n}_{1,h}^2 +  \kappa 
			\norm{\boldsymbol{\mu}_h^n}_{1,h}^2  
			\leq C + \kappa \norm{\boldsymbol{\mu}_h^n}_{1,h}^2.
		\end{equation}
		and thus multiplying both sides of \eqref{eq:mu_l2_sum_bnd_1} by $\tau$, summing from $1\le n 
		\le N$, and using \eqref{eq:uniform_energy_bnd}, we find
		\begin{equation} \label{eq:mu_l2_sum_bnd_2}
			\tau \sum_{n=1}^N \norm{\mu_h^n}_{L^2(\Omega)}^2 \le C,
		\end{equation}
		with $C$ depending linearly on the final time $T$. Returning to \eqref{eq:apriori_inter_110}, we find 
		for 
		all $1 \le m \le N$,
		\begin{equation} \label{eq:mu_l2_bnd}
			\begin{split}
				\norm[0]{\mu_h^m}_{L^2(\Omega)}^2 + \kappa \tau \sum_{n=1}^m  \norm{\delta_{\tau} 
					c_h^n}_{L^2(\Omega)}^2  \le C,
			\end{split}
		\end{equation}
		with $C$ depending linearly on the final time $T$.
		Finally, we prove that
		\begin{equation}
			\max_{1 \le n \le N} \del{ \norm{c_h^n}_{L^{\infty}(\Omega)} + \norm{\boldsymbol{\Delta}_h 
					\boldsymbol{c}_h^n}_{0,h}^2 } \le C.
		\end{equation}
		For readibility, let $\boldsymbol{\chi}_h = \boldsymbol{\Delta}_h \boldsymbol{c}_h^n$.
		Testing the definition of the discrete Laplacian \eqref{eq:discrete_laplacian} with 
		$\boldsymbol{v}_h = \boldsymbol{\chi}_h  = (\chi_h, \hat{\chi}_h)$,
		and using \eqref{eq:CH_disc_b} and the definitions of $\Phi_+$ and 
		$\Phi_-$ 
		in 
		\eqref{eq:ginzburg_landau}, we find that
		%
		%
		\begin{equation}
			\kappa \norm{\boldsymbol{\chi}_h}_{0,h}^2  = - \kappa 
			a_{\mathcal{D}}( 
			\boldsymbol{c}_h^n, \boldsymbol{\chi}_h)   = 	
			( (c_h^n)^3 - c_h^{n-1}, \chi_h )_\Omega
			- (\mu_h^n, \chi_h)_{\Omega}.
		\end{equation}
		The Cauchy--Schwarz's inequality and Young's inequality then yield
		\begin{equation} 
			\kappa \norm{\boldsymbol{\chi}_h}_{0,h}^2 \le  \frac{2}{\kappa} 
				\norm{c_h^n}_{L^6(\Omega)}^6 + \frac{2}{\kappa} 
				\norm[0]{c_h^{n-1}}_{L^2(\Omega)}^2  + 
			\frac{\kappa}{2} \norm{\boldsymbol{\chi}_h }_{0,h}^2 +\frac{1}{\kappa} 
			\norm{\mu_h^n}_{L^2(\Omega)}^2. 
		\end{equation}
		Applying \eqref{eq:Lpboundch} and \eqref{eq:mu_l2_bnd}, we obtain
		%
		%
		\begin{equation} \label{eq:disc_lapl_l2_bnd}
			\max_{1 \le n \le N} \norm{\boldsymbol{\Delta}_h \boldsymbol{c}_h^n}_{0,h} \le C.
		\end{equation}
		Finally, by the discrete Agmon inequality \Cref{cor:disc_agmon_Sh}, we have if the domain is 
		convex
		\begin{equation} \label{eq:linfty_bnd_1}
			\norm{c_h^n}_{L^{\infty}(\Omega)} \le \norm{c_h^n - \overline{c_0}}_{L^{\infty}(\Omega)}  + 
			\norm{\overline{c_0}}_{L^{\infty}(\Omega)} \lesssim \max_{1 \le n \le N} \del{ 
				\norm{\boldsymbol{c}_h^n}_{1,h}^{1/2} 
				\norm{\boldsymbol{\Delta}_h \boldsymbol{c}_h^n}_{0,h}^{1/2}} 
			+  \norm{ \overline{c_0}}_{L^{\infty}(\Omega)},
		\end{equation}
		and the right-hand side of \eqref{eq:linfty_bnd_1} is uniformly bounded thanks to 
		\eqref{eq:uniform_energy_bnd} and \eqref{eq:disc_lapl_l2_bnd}.
		\qed
	\end{proof}
	
	\subsection{Error analysis}
	The main goal of this section is to prove  the following convergence result. 
	\begin{theorem}[Error estimate]\label{theorem:error_estimate}
		Let  $2 \leq  s \leq k+1$.	 Assume that the  weak solution has the following regularity:
		\begin{align*}
			c,\mu \in L^{\infty}(0,T;H^s(\Omega)), \,\, \partial_t c \in  L^{2}(0,T;H^{s-1}(\Omega)), \,\, 
			\partial_{tt} c \in L^2(0;T,L^2(\Omega)).
		\end{align*}
		Further, assume the domain $\Omega \subset \mathbb{R}^d$, $d \in\cbr{2,3}$ is convex.
		There exists $\tau_0>0$ such that for any $\tau \leq \tau_0$, 
		the following error estimate holds.  For any $ 1 \leq m \leq N$, 
		\begin{equation}
			\|\bm{c}^n- \bm{c}_h^n \|^2_{1,h} + \tau \sum_{n=1}^{m} \|\bm{\mu}^n - \bm{\mu}_h^n 
			\|^2_{1,h} \lesssim \tau^2 + h^{2s-2}. 
		\end{equation}
	\end{theorem}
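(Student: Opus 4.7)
The plan is to use a standard energy-type error analysis adapted to the HDG setting, combined crucially with the a priori $L^\infty$ bound of Theorem~\ref{thm:linf_stability}. First, I would introduce an HDG elliptic projection $\bm{\Pi}_h$ onto $\bm{M}_h$ defined on mean-zero arguments by $a_\mathcal{D}(\bm{\Pi}_h \bm{v} - \bm{v}, \bm{w}_h) = 0$ for all $\bm{w}_h \in \bm{M}_h$. By arguments parallel to the proof of Lemma~\ref{lem:green_estimates}, with the $L^2$ bound obtained via an Aubin--Nitsche duality argument that uses the elliptic regularity \eqref{eq:ellipticity} (hence the convexity of $\Omega$), one obtains the optimal estimates
\begin{equation*}
\|\bm{v} - \bm{\Pi}_h \bm{v}\|_{1,h} \lesssim h^{s-1} |v|_{H^s(\Omega)}, \qquad \|v - \Pi_h v\|_{L^2(\Omega)} \lesssim h^s |v|_{H^s(\Omega)},
\end{equation*}
for $1 \le s \le k+1$. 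Shifting by $\overline{c_0}$ so the continuous and discrete concentrations share a common mean, I would decompose $\bm{c}^n - \bm{c}_h^n = \bm{\eta}_c^n + \bm{\theta}_c^n$ and $\bm{\mu}^n - \bm{\mu}_h^n = \bm{\eta}_\mu^n + \bm{\theta}_\mu^n$, where $\bm{\eta}^n$ is the (known) projection error and $\bm{\theta}^n \in \bm{M}_h$ is the discrete error to estimate.

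By integration by parts and the Neumann boundary condition \eqref{eq:CH_strong_d}, the continuous solution satisfies, for all $\bm{\chi}_h, \bm{\phi}_h \in \bm{S}_h$,
\begin{equation*}
(\partial_t c^n, \chi_h)_\Omega + a_\mathcal{D}(\bm{\mu}^n, \bm{\chi}_h) = 0, \qquad \kappa \, a_\mathcal{D}(\bm{c}^n, \bm{\phi}_h) + (\Phi'(c^n), \phi_h)_\Omega = (\mu^n, \phi_h)_\Omega.
\end{equation*}
Subtracting the discrete scheme \eqref{eq:CH_disc}, writing $\partial_t c^n = \delta_\tau c^n + R_\tau^n$ for the time truncation error, and using the projection property to kill $\bm{\eta}^n$ in all $a_\mathcal{D}$ contributions, yields error equations for $(\bm{\theta}_c^n, \bm{\theta}_\mu^n)$ with right-hand sides built from $R_\tau^n$, $\delta_\tau \eta_c^n$, $\eta_\mu^n$, and the nonlinear residual $NL^n := \Phi'(c^n) - \Phi_+'(c_h^n) - \Phi_-'(c_h^{n-1})$. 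I would then test the first error equation with $\bm{\chi}_h = \bm{\theta}_\mu^n$ and the second with $\bm{\phi}_h = \delta_\tau \bm{\theta}_c^n$ and sum; the cross terms $(\theta_\mu^n, \delta_\tau \theta_c^n)_\Omega$ cancel, the coercivity of $a_\mathcal{D}$ produces $\|\bm{\theta}_\mu^n\|_{1,h}^2$, and the identity used in \eqref{eq:bilinear_backward_euler_formula} yields a telescoping bound on $\|\bm{\theta}_c^m\|_{1,h}^2$ after summing over $n$.

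The critical step is handling the nonlinear contribution. I would split
\begin{equation*}
NL^n = \bigl[\Phi_+'(c^n) - \Phi_+'(c_h^n)\bigr] + \bigl[\Phi_-'(c^{n-1}) - \Phi_-'(c_h^{n-1})\bigr] + \bigl[\Phi_-'(c^n) - \Phi_-'(c^{n-1})\bigr],
\end{equation*}
where the last bracket is an $\mathcal{O}(\tau)$ time offset controlled by $\|\partial_t c\|_{L^\infty(0,T;L^2(\Omega))}$. For the Ginzburg--Landau potential, $\Phi_+'(c) - \Phi_+'(c_h) = (c^2 + c\,c_h + c_h^2)(c - c_h)$, and the quadratic prefactor is bounded in $L^\infty$ uniformly in $n$ by Sobolev embedding applied to $c \in L^\infty(0,T;H^s(\Omega))$ together with the discrete bound \eqref{eq:uniform_apriori_bnds2} of Theorem~\ref{thm:linf_stability} (which itself relies on Corollary~\ref{cor:disc_agmon_Sh} and convexity of $\Omega$). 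This reduces the nonlinear residual to a linear bound of the form $\|NL^n\|_{L^2(\Omega)} \lesssim \|\bm{\eta}_c^n\|_{0,h} + \|\bm{\theta}_c^n\|_{1,h} + \|\bm{\theta}_c^{n-1}\|_{1,h} + \tau$. To avoid needing control of $\delta_\tau \bm{\theta}_c^n$ (which is not directly available on the left-hand side), I would perform summation by parts in time on $\tau \sum_n (NL^n, \delta_\tau \theta_c^n)_\Omega$, producing terms in $\theta_c^m$ and $\delta_\tau NL^n$; the latter is treated by Lipschitz estimates on $\Phi_\pm'$ combined with the a priori control $\tau \sum_n \|\delta_\tau c_h^n\|_{L^2(\Omega)}^2 \lesssim 1$ established in \eqref{eq:mu_l2_bnd}.

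Finally, after applying Cauchy--Schwarz, Young, and the discrete Poincar\'e inequality \eqref{eq:poincare} to all right-hand side terms, I would absorb small multiples of $\|\bm{\theta}_\mu^n\|_{1,h}^2$ into the coercive left-hand side and invoke a discrete Gronwall inequality. The time truncation error contributes $\tau^2$ via the assumption $\partial_{tt} c \in L^2(0,T;L^2(\Omega))$, and the projection errors contribute $h^{2s-2}$. The condition $\tau \le \tau_0$ arises because certain terms produced by summation by parts on $NL^n$ carry factors that must be small relative to the coercivity constant of $a_\mathcal{D}$ and the Lipschitz constant of $\Phi_+'$ inherited from Theorem~\ref{thm:linf_stability}. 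The main obstacle I anticipate is the careful bookkeeping around the convex--concave splitting: reconciling the implicit $\Phi_+'(c_h^n)$ with the explicit $\Phi_-'(c_h^{n-1})$ while producing estimates uniform in both $h$ and $\tau$ requires simultaneous use of the discrete $L^\infty$ bound, the energy bound \eqref{eq:uniform_energy_bnd}, and the discrete Gagliardo--Nirenberg inequality of Lemma~\ref{lemma:discrete_GN}.
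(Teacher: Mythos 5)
Your overall skeleton (elliptic/\,$L^2$ projections, error splitting, cancellation of the cross terms, telescoping via \eqref{eq:bilinear_backward_euler_formula}, Gr\"onwall, and the use of Theorem~\ref{thm:linf_stability} to linearize the cubic term) matches the paper, but your choice of test functions leads to a genuine gap at exactly the step you flag as critical. Testing the first error equation with $\bm{\theta}_\mu^n$ and the second with $\delta_\tau\bm{\theta}_c^n$ leaves the pairing $\tau\sum_n(NL^n,\delta_\tau\theta_c^n)_\Omega$ on the right with no matching control of $\delta_\tau\theta_c^n$ on the left. Your proposed fix --- summation by parts in time, bounding $\delta_\tau NL^n$ via Lipschitz estimates and the stability bound $\kappa\tau\sum_n\|\delta_\tau c_h^n\|_{L^2(\Omega)}^2\le C$ from \eqref{eq:mu_l2_bnd} --- does not close: that stability bound is $O(1)$, not $O(\tau^2+h^{2s-2})$, so after Cauchy--Schwarz and Young the term $\tau\sum_n(\delta_\tau NL^n,\theta_c^{n-1})_\Omega$ contributes an $O(1)$ constant to the right-hand side and the final estimate degenerates to $\|\bm{\theta}_c^m\|_{1,h}^2\lesssim 1$. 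The alternative of writing $\delta_\tau NL^n$ in terms of error increments reintroduces $\delta_\tau\theta_c^n$ and is circular; the endpoint term $(NL^m,\theta_c^m)_\Omega$ also produces $\|\bm{\theta}_c^m\|_{1,h}^2$ with a constant of order $\|c\|_{L^\infty}^2$ that cannot be absorbed into the $\kappa$-weighted left-hand side for small $\kappa$.

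The paper sidesteps this entirely by testing the mass-balance error equation \eqref{eq:error_eq_a} with $\bm{\mathcal{J}}_h(\delta_\tau e_c^n)$ rather than with the $\mu$-error: since $a_{\mathcal D}(\bm{e}_\mu^n,\bm{\mathcal{J}}_h(\delta_\tau e_c^n))=(e_\mu^n,\delta_\tau e_c^n)_\Omega$, the cross terms still cancel, but the coercive term on the left becomes $C_{\mathrm{coer}}\|\bm{\mathcal{J}}_h(\delta_\tau e_c^n)\|_{1,h}^2$ --- a discrete negative-norm of $\delta_\tau e_c^n$. Every right-hand-side pairing with $\delta_\tau e_c^n$, including the nonlinear one, is then rewritten via Lemma~\ref{lem:alternate_green_op_properties} as $a_{\mathcal D}(\bm{\mathcal{J}}_h(\delta_\tau e_c^n),\bm{z}_h)\le C\|\bm{z}_h\|_{1,h}\|\bm{\mathcal{J}}_h(\delta_\tau e_c^n)\|_{1,h}$ and absorbed. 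The price is an $H^1$-type bound on $\bm{z}_h=\bm{\pi}_h((c^n)^3-(c_h^n)^3)$ (Lemma~\ref{prop:Lipschitz_condition}), and this is precisely where the discrete Gagliardo--Nirenberg inequality (for $\|\nabla_h c_h^n\|_{L^3(\Omega)}$) and the $L^\infty$ bound enter --- not, as in your sketch, merely to bound the quadratic prefactor in $L^\infty$. The estimate for $\tau\sum_n\|\bm{\mu}^n-\bm{\mu}_h^n\|_{1,h}^2$ is then recovered in a second pass by testing \eqref{eq:error_eq_a} with $\bm{e}_\mu^n$, using the already-established bound on $\tau\sum_n\|\bm{\mathcal{J}}_h(\delta_\tau e_c^n)\|_{1,h}^2$. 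You would need to adopt this negative-norm testing (or an equivalent device) for your argument to go through.
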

	The proof of this estimate requires several intermediate results; thus, it is presented in subsection 
	\ref{subsec:proof_err_estimate}. We begin the analysis by introducing useful operators and 
	interpolants. 
	\subsection{Interpolation and intermediate results}
	Recall the definitions of the $L^2$ projections \eqref{eq:l2_projections} and their approximation 
	properties~\Cref{prop:proj_estimates}.	For a given $ v \in H^s(\Omega)$,  set $\bm{v} = (v,v|_{\Gamma_h})$ and denote by $\bm{\pi}_h 
	v = ( \pi_h v, \hat{\pi}_h v)$, defined by \eqref{eq:l2_projections}.  From 
	~\Cref{prop:proj_estimates}, it  follows that 
	\begin{equation} \label{eq:approximation_property_L^2}
		\|\bm{v} - \bm{\pi}_h v\|_{1,h} \lesssim h^{s-1} |v|_{H^s(\Omega)}, \quad 2\leq s \leq k+1. 
	\end{equation}
	Further, we  consider the elliptic projection $\boldsymbol{\Pi}_h: H^2(\Omega)\rightarrow 
	\boldsymbol{S}_h$ defined as follows. Fix $w\in H^2(\Omega)$ and set $\boldsymbol{w} = (w, 
	w|_{\Gamma_h})$. The function $\boldsymbol{\Pi}_h w = (\Pi_h w, \hat{\Pi}_h w)$ is the unique 
	function in $\boldsymbol{S}_h$ satisfying:
	\begin{equation} \label{eq:broken_elliptic_projection}
		a_{\mathcal{D}}(\boldsymbol{\Pi}_h w,\boldsymbol{v}_h) = 
		a_{\mathcal{D}}(\boldsymbol{w},\boldsymbol{v}_h), \quad 
		\forall \boldsymbol{v}_h \in  \boldsymbol{S}_h,  \,\,\, \mathrm{and} \,\,\, 
		\int_\Omega \Pi_h w  =  \int_\Omega w.
	\end{equation}

	\begin{lemma}[Approximation properties of $\boldsymbol{\Pi}_h $]\label{lem:elliptic_proj_est} 
		The following approximation properties hold.  Fix $s$ such that $2 \le s \le k+1$ and let
		$w\in H^s(\Omega)$. Define $\boldsymbol{w} = (w,\hat{w}|_{\Gamma_h})$. 
		\begin{align}
			\|w - \Pi_h  w \|_{L^2(\Omega)} + 	h\norm{\boldsymbol{w} - \boldsymbol{\Pi}_h  w}_{1,h,\star} 
			&\lesssim h^{s} \norm{w}_{H^{s}(\Omega)}. 
			\label{eq:approximation_elliptic_projection}
		\end{align}
	\end{lemma}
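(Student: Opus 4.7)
The plan is to split the estimate into an energy-norm bound and an $L^2$-norm bound, the second obtained by a duality (Aubin--Nitsche) argument which is the main technical step and where convexity of $\Omega$ enters. The starting point is Galerkin orthogonality, which follows directly from the definition~\eqref{eq:broken_elliptic_projection}: $a_{\mathcal{D}}(\boldsymbol{w} - \boldsymbol{\Pi}_h w,\boldsymbol{v}_h) = 0$ for all $\boldsymbol{v}_h \in \boldsymbol{S}_h$.

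For the energy norm, I would first decompose $\boldsymbol{w} - \boldsymbol{\Pi}_h w = (\boldsymbol{w} - \boldsymbol{\pi}_h w) + (\boldsymbol{\pi}_h w - \boldsymbol{\Pi}_h w)$. The first piece is controlled directly by the approximation properties of $\boldsymbol{\pi}_h$ from \Cref{prop:proj_estimates}, which yield $\|\boldsymbol{w} - \boldsymbol{\pi}_h w\|_{1,h,\star} \lesssim h^{s-1}\|w\|_{H^s(\Omega)}$. For the second piece, since it lies in $\boldsymbol{S}_h$, the norms $\|\cdot\|_{1,h}$ and $\|\cdot\|_{1,h,\star}$ are equivalent by~\eqref{eq:equivnorms}. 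Then coercivity~\eqref{eq:aD_coercive}, Galerkin orthogonality tested against $\boldsymbol{v}_h = \boldsymbol{\pi}_h w - \boldsymbol{\Pi}_h w \in \boldsymbol{S}_h$, and the extended continuity~\eqref{eq:aD_extended_continuous} applied to $a_{\mathcal{D}}(\boldsymbol{\pi}_h w - \boldsymbol{w},\boldsymbol{\pi}_h w - \boldsymbol{\Pi}_h w)$ give $\|\boldsymbol{\pi}_h w - \boldsymbol{\Pi}_h w\|_{1,h} \lesssim \|\boldsymbol{w} - \boldsymbol{\pi}_h w\|_{1,h,\star} \lesssim h^{s-1}\|w\|_{H^s(\Omega)}$. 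Combining these yields $h\|\boldsymbol{w} - \boldsymbol{\Pi}_h w\|_{1,h,\star} \lesssim h^s\|w\|_{H^s(\Omega)}$.

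For the $L^2$ estimate, I would introduce the dual problem: since $\overline{w - \Pi_h w} = 0$ by the mean-value constraint in~\eqref{eq:broken_elliptic_projection}, let $\varphi$ solve $-\Delta\varphi = w - \Pi_h w$ in $\Omega$ with homogeneous Neumann boundary data and zero mean. The convexity of $\Omega$ together with~\eqref{eq:ellipticity} gives $\|\varphi\|_{H^2(\Omega)} \lesssim \|w - \Pi_h w\|_{L^2(\Omega)}$. Set $\boldsymbol{\varphi} = (\varphi,\varphi|_{\Gamma_h})$. The key consistency step, which is the main technical obstacle, is to verify that $a_{\mathcal{D}}(\boldsymbol{\varphi},\boldsymbol{v}) = (w - \Pi_h w, v)_\Omega$ for every $\boldsymbol{v} = (v,\hat{v}) \in H^2(\mathcal{E}_h)\times L^2(\Gamma_h)$. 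This follows by element-wise integration by parts: the penalty and adjoint-consistency terms involving $\varphi - \varphi|_{\Gamma_h}$ vanish identically, the interior jumps of $\nabla\varphi\cdot\mathbf{n}$ vanish since $\varphi \in H^2$, and the boundary contributions vanish by the Neumann condition.

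With consistency in hand, the duality argument runs as follows: testing against $\boldsymbol{v} = \boldsymbol{w} - \boldsymbol{\Pi}_h w$, using symmetry, Galerkin orthogonality with test function $\boldsymbol{\pi}_h\varphi \in \boldsymbol{S}_h$, and finally the second extended continuity bound~\eqref{eq:aD_extended_continuous2},
\begin{equation*}
\|w - \Pi_h w\|_{L^2(\Omega)}^2 = a_{\mathcal{D}}(\boldsymbol{w} - \boldsymbol{\Pi}_h w, \boldsymbol{\varphi} - \boldsymbol{\pi}_h\varphi) \lesssim \|\boldsymbol{w} - \boldsymbol{\Pi}_h w\|_{1,h,\star}\|\boldsymbol{\varphi} - \boldsymbol{\pi}_h\varphi\|_{1,h,\star}.
\end{equation*}
Applying \Cref{prop:proj_estimates} with $s=2$ to bound $\|\boldsymbol{\varphi} - \boldsymbol{\pi}_h\varphi\|_{1,h,\star} \lesssim h\|\varphi\|_{H^2(\Omega)}$, combining with elliptic regularity and the energy estimate already established, and dividing through by $\|w - \Pi_h w\|_{L^2(\Omega)}$ produces the desired $h^s\|w\|_{H^s(\Omega)}$ bound. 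I expect the only delicate point to be the careful verification of the consistency identity against non-polynomial test functions; everything else reduces to bookkeeping with the tools already assembled.
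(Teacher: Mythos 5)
Your proposal is correct and follows essentially the same route as the paper's proof: the energy estimate via the splitting through $\boldsymbol{\pi}_h w$, coercivity, Galerkin orthogonality and the extended continuity bounds, followed by an Aubin--Nitsche duality argument with the auxiliary Neumann problem whose consistency is checked by element-wise integration by parts. The only (harmless) differences are that you state the consistency identity for arbitrary test pairs rather than just for $\boldsymbol{w}-\boldsymbol{\Pi}_h w$, and you make explicit the zero-mean compatibility condition for the dual problem, which the paper leaves implicit.
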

	\begin{proof}
		Observe by the definition of the $L^2$-projection $\pi_h$ \eqref{eq:l2_projections}, it holds that
		\begin{equation}
			\int_{\Omega} \pi_h w \dif x = \int_{\Omega} w \dif x = \int_{\Omega} \Pi_h  w \dif x,
		\end{equation}
		so that $\pi_h w - \Pi_h  w \in M_h$. Consequently, by \eqref{eq:equivnorms},  the 
		coercivity and 
		continuity of 
		$a_{\mathcal{D}}(\cdot,\cdot)$ 
		\eqref{eq:aD_coercive} and \eqref{eq:aD_extended_continuous2}, we have
		\begin{align}
			\norm{ \boldsymbol{\pi}_h w - \boldsymbol{\Pi}_h  w}_{1,h,\star}^2 \lesssim	
			\norm{
				\boldsymbol{\pi}_h w - \boldsymbol{\Pi}_h  
				w}_{1,h}^2 &\lesssim a_{\mathcal{D}}(\boldsymbol{\pi}_h w - 
			\boldsymbol{\Pi}_h  
			w , \boldsymbol{\pi}_h w - \boldsymbol{\Pi}_h  
			w) \\&=  a_{\mathcal{D}}(\boldsymbol{\pi}_h w - 
			\boldsymbol{w} , \boldsymbol{\pi}_h w - \boldsymbol{\Pi}_h  
			w) \\& \lesssim \norm{\boldsymbol{\pi}_h w - 
				\boldsymbol{w}}_{1,h,\star}  \norm{\boldsymbol{\pi}_h w - 
				\boldsymbol{\Pi}_h  w}_{1,h,\star},
		\end{align}
		and therefore by the triangle inequality and the continuity of $a_{\mathcal{D}}(\cdot,\cdot)$ 
		\eqref{eq:aD_extended_continuous},
		\begin{equation}
			\norm{\boldsymbol{w} - \boldsymbol{\Pi}_h  w}_{1,h,\star} \lesssim \norm{\boldsymbol{w} - 
				\boldsymbol{\pi}_h 
				w}_{1,h,\star} + 
			\norm{\boldsymbol{\pi}_h w - \boldsymbol{\Pi}_h  w}_{1,h,\star}  \lesssim 
			\norm{\boldsymbol{\pi}_h w - 
				\boldsymbol{w}}_{1,h,\star}.
		\end{equation}
		The approximation properties of the projection $\boldsymbol{\pi}_h$ 
		\Cref{prop:proj_estimates} yield
		\begin{equation}\label{eq:elliptic_proj_est_1}
			\norm{\boldsymbol{w} - \boldsymbol{\Pi}_h  w}_{1,h,\star} \lesssim h^{s-1} 
			\norm{w}_{H^{s}(\Omega)}, 
			\quad \forall 2 \le s \le k+1.
		\end{equation}
		
		Since the domain $\Omega$ is convex, we can 
		prove 
		optimal $L^2$-estimates for $\Pi_h $ using a standard duality argument. To this end, consider 
		the 
		auxiliary Neumann problem: find $z \in H^2(\Omega) \cap L_0^2(\Omega)$ such that
		\begin{align}
			-\Delta z &= w - \Pi_h  w, &&  \hspace{-40mm} \text{ in }  \Omega, \\
			\nabla z \cdot n &= 0, &&  \hspace{-40mm} \text{ on } \partial \Omega.
		\end{align}
		We find that, on the one hand,
		\begin{equation}
			\norm{w - \Pi_h  w}_{L^2(\Omega)}^2 = -\int_{\Omega} \Delta z (w - \Pi_h  w) \dif x, 
		\end{equation}
		while on the other hand, the single-valuedness of $z$ across element interfaces yields, 
		\begin{equation}
			a_{\mathcal{D}}( \boldsymbol{w} - \boldsymbol{\Pi}_h  w, \boldsymbol{z}) =  \sum_{E \in 
				\mathcal{E}_h} \int_{E} \nabla 
			(w - \Pi_h  w)
			\cdot 
			\nabla 
			z
			\dif x
			- \sum_{E \in \mathcal{E}_h} \int_{\partial E}  
			\del[1]{ ( w -  \Pi_h  w) - (w- \hat{\Pi}_h  w)} \nabla z \cdot \mathbf{n}_E \dif s,
		\end{equation}
		where $\boldsymbol{z} = (z, z|_{\Gamma_h})$. Using the regularity of $w$ and $z$ and the 
		fact that $\nabla z \cdot n = 0$ on 
		$\partial 
		\Omega$, we find
		\begin{equation}
			\sum_{E \in \mathcal{E}_h} \int_{\partial E}  
			\del[1]{ w- \hat{\Pi}_h  w} \nabla z \cdot \mathbf{n}_E \dif s = 0,
		\end{equation}
		while an element-by-element integration by parts shows that
		\begin{equation}
			\sum_{E \in 
				\mathcal{E}_h} \int_{E} \nabla 
			(w - \Pi_h  w)
			\cdot 
			\nabla 
			z
			\dif x
			- \sum_{E \in \mathcal{E}_h} \int_{\partial E}  
			\del[1]{ ( w -  \Pi_h  w) } \nabla z \cdot \mathbf{n}_E \dif s = -\int_{\Omega} \Delta z (w - \Pi_h  w) \dif 
			x.
		\end{equation}
		Therefore, 
		\begin{equation}
			\norm{w - \Pi_h  w}_{L^2(\Omega)}^2 = a_{\mathcal{D}}( \boldsymbol{w} - 
			\boldsymbol{\Pi}_h  w, 
			\boldsymbol{z}).
		\end{equation}
		Now, observe that since $z \in L_0^2(\Omega)$, $\boldsymbol{\pi}_h z \in \boldsymbol{M}_h$. 
		Consequently, by the definition of the elliptic projection \eqref{eq:broken_elliptic_projection},
		\begin{equation}
			a_{\mathcal{D}}( \boldsymbol{w} - \boldsymbol{\Pi}_h  w, \boldsymbol{\pi}_h z) = 0,
		\end{equation}
		and therefore, by the continuity of $a_{\mathcal{D}}(\cdot,\cdot)$ 
		\eqref{eq:aD_extended_continuous2}, we have
		\begin{equation}
			\norm{w - \Pi_h  w}_{L^2(\Omega)}^2 = a_{\mathcal{D}}( \boldsymbol{w} - 
			\boldsymbol{\Pi}_h  w, 
			\boldsymbol{z} - \boldsymbol{\pi}_h z) \lesssim \norm{\boldsymbol{w} - \boldsymbol{\Pi}_h  
				w}_{1,h,\star} \norm{\boldsymbol{z} - \boldsymbol{\pi}_h z}_{1,h,\star},
		\end{equation}
		and the approximation properties of the projection $\boldsymbol{\pi}_h$ 
		\Cref{prop:proj_estimates} 
		yields
		\begin{equation}
			\norm{w - \Pi_h  w}_{L^2(\Omega)}^2  \lesssim h^2 \norm{z}_{H^2(\Omega)} 
			\norm{\boldsymbol{w} - \boldsymbol{\Pi}_h  
				w}_{1,h,\star}.
		\end{equation}
		The result now follows from a standard elliptic regularity argument and 
		\eqref{eq:elliptic_proj_est_1}. \qed
	\end{proof}
	For notational brevity, we denote by $\bm{v}^n = \bm{v}(t_n)$ for a function $\bm{v} \in 
	L^1(0,T;H^2(\mesh) \times L^2(\Gamma_h))$ and $1 \le n \le N$. We define the errors
	\begin{equation}
		\boldsymbol{e}_{c}^n = (\boldsymbol{\Pi}_h  \boldsymbol{c})^n - \boldsymbol{c}_h^n = 
		(e_c^n,\hat{e}_c^n), \quad 	
		\boldsymbol{e}_{\mu}^n = (\boldsymbol{\pi}_h \mu)^n - \boldsymbol{\mu}_h^n = (e_\mu^n, 
		\hat{e}_\mu^n).
	\end{equation}
	To derive the error equations required for the a priori analysis, observe that the following consistency property holds. 
	\begin{lemma}[Consistency] \label{prop:consistency}
 	Let $(c,\mu)$ be the exact solution to \eqref{eq:CH_strong}. Define $\boldsymbol{c} = 
		(c,c|_{\Gamma_h})$
		and $\boldsymbol{\mu} = (\mu,\mu|_{\Gamma_h})$.	Under the same regularity assumptions as Theorem \ref{theorem:error_estimate}, we have
		\begin{subequations} \label{eq:CH_exact}
			\begin{align}
				((\partial_t c)^n, \chi_h)_{\Omega} + a_{\mathcal{D}}(\boldsymbol{\mu}^n, 
				\boldsymbol{\chi}_h) &= 0, && \forall \boldsymbol{\chi}_h \in \boldsymbol{S}_h,
				\label{eq:CH_exact_a}\\
				(\Phi'(c^n), \phi_h)_{\Omega} + 
				\kappa a_{\mathcal{D}}(\boldsymbol{c}^n, \boldsymbol{\phi}_h) &= (\mu^n, 
				\phi_h)_{\Omega}, && 
				\forall \boldsymbol{\phi}_h \in \boldsymbol{S}_h. \label{eq:CH_exact_b}
			\end{align}
		\end{subequations}
	\end{lemma}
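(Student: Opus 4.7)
The plan is a direct element-wise integration-by-parts argument, exploiting the regularity of the exact solution and the homogeneous Neumann boundary data to identify $a_{\mathcal D}(\bm{\mu}^n,\bm{\chi}_h)$ with $-\int_\Omega \Delta\mu^n\,\chi_h\,dx$ (and analogously for $c$). The first observation is that because $\mu^n\in H^s(\Omega)$ is single-valued and $\hat\mu^n = \mu^n|_{\Gamma_h}$ by construction of $\bm{\mu}^n$, the trace difference $\mu^n-\hat\mu^n$ vanishes on $\partial E$ for every $E\in\mesh$. Hence the penalty term and the consistency term containing $(\mu^n-\hat\mu^n)\nabla\chi_h\cdot\mathbf{n}_E$ both drop out of the expression for $a_{\mathcal D}(\bm{\mu}^n,\bm{\chi}_h)$.

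Next, I would integrate by parts on each element $E$ to rewrite $\int_E\nabla\mu^n\cdot\nabla\chi_h\,dx$ as $-\int_E\Delta\mu^n\,\chi_h\,dx+\int_{\partial E}\nabla\mu^n\cdot\mathbf{n}_E\,\chi_h\,ds$. Combining with the surviving consistency term leaves
\begin{equation*}
a_{\mathcal D}(\bm{\mu}^n,\bm{\chi}_h) = -\int_\Omega \Delta\mu^n\,\chi_h\,dx + \sum_{E\in\mesh}\int_{\partial E}\nabla\mu^n\cdot\mathbf{n}_E\,\hat\chi_h\,ds.
\end{equation*}
The residual sum over element boundaries vanishes: on each interior face $\hat\chi_h$ is single-valued while the two adjacent elements contribute opposite outward normals, and the regularity $\mu^n\in H^2(\Omega)$ guarantees that $\nabla\mu^n$ admits a well-defined trace, so the contributions cancel pairwise; on each boundary face, $\nabla\mu^n\cdot\boldsymbol{n}=0$ by \eqref{eq:CH_strong_d}. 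Substituting $\Delta\mu^n=(\partial_t c)^n$ from \eqref{eq:CH_strong_a} then delivers \eqref{eq:CH_exact_a}.

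The argument for \eqref{eq:CH_exact_b} is identical, with $c^n$ in place of $\mu^n$ and $\phi_h$ in place of $\chi_h$: the Neumann condition $\nabla c\cdot\boldsymbol{n}=0$ makes the residual boundary term vanish, yielding $\kappa\,a_{\mathcal D}(\bm{c}^n,\bm{\phi}_h)=-\kappa\int_\Omega\Delta c^n\,\phi_h\,dx$. Substituting $-\kappa\Delta c^n=\mu^n-\Phi'(c^n)$ from the second equation of \eqref{eq:CH_strong} produces \eqref{eq:CH_exact_b}. The whole argument is a standard DG-style consistency check with no analytic obstacle; the only bookkeeping point is tracking face-normal orientations so that the interior-face boundary contributions cancel after the element-wise summation.
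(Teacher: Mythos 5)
Your argument is correct and is precisely the standard consistency verification that the paper leaves implicit: the lemma is stated there without proof, as an observation preceding the error equations. All the key points are in place — the single-valuedness of $\mu^n-\hat\mu^n$ and $c^n-\hat c^n$ killing the penalty and one consistency term, the element-wise integration by parts, the pairwise cancellation of $\int_{\partial E}\nabla\mu^n\cdot\mathbf{n}_E\,\hat\chi_h$ on interior faces (justified by $\nabla\mu^n$ having a single-valued trace for $\mu^n\in H^2(\Omega)$), the Neumann conditions on boundary faces, and the substitution of the strong equations. Nothing further is needed.
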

	By the consistency 
	property \eqref{prop:consistency} and the definition of the numerical scheme \eqref{eq:CH_disc}, for 
	any 
		$ \boldsymbol{\chi}_h, \bm{\phi}_h  \in 
		\boldsymbol{S}_h$, we obtain the following 
	\begin{subequations}
		\begin{alignat}{2}
			((\partial_t c)^n - \delta_\tau  c_h^n , \chi_h)_{\Omega}& + a_{\mathcal{D}} 
			(\boldsymbol{\mu}^n - 
			\boldsymbol{\mu}_h^n, \boldsymbol{\chi}_h)  = 0, \\
			(\mu^n - \mu_h^n, \phi_h)_{\Omega} &= ((c^n)^3- 
			(c_h^n)^3,  \phi_h)_{\Omega} - (c^n-c_h^{n-1},  \phi_h)_{\Omega}  
			+ \kappa a_{\mathcal{D}}(\boldsymbol{c}^n - 
			\boldsymbol{c}_h^n, \bm{\phi}_h).
		\end{alignat}
	\end{subequations}
	By the definition of the elliptic projection \eqref{eq:broken_elliptic_projection} and the 
	orthogonal $L^2$-projection \eqref{eq:l2_projections}, we equivalently 
	have for any $ \boldsymbol{\chi}_h, \bm{\phi}_h  \in 
		\boldsymbol{S}_h$ 
	\begin{subequations}
		\begin{align}
			(\delta_\tau e_{c}^n , \chi_h)_{\Omega} + a_{\mathcal{D}} \label{eq:error_eq_a}
			(\boldsymbol{e}_{\mu}^n, \boldsymbol{\chi}_h) &= (\delta_\tau (\Pi_h c)^n - (\partial_t c)^n  , 
			\chi_h)_{\Omega} + a_{\mathcal{D}} 
			(\boldsymbol{\pi}_h \mu^n - \boldsymbol{\mu}^n, \boldsymbol{\chi}_h), \\ \label{eq:error_eq_b}
			(e_{\mu}^n, \phi_h)_{\Omega}& = ((c^n)^3- (c_h^n)^3,  \phi_h)_{\Omega} 
			- (c^n- c_h^{n-1},  \phi_h)_{\Omega} + \kappa a_{\mathcal{D}}(\boldsymbol{e}_{c}^n, 
			\boldsymbol{\phi}_h).
		\end{align}
	\end{subequations}
	In the proof of Theorem~\ref{theorem:error_estimate}, a major difficulty is in handling the first term 
	in the right-hand side of \eqref{eq:error_eq_b}, see term  $T_4$ in 
	\eqref{eq:termstobound}. The following result is critical in controlling that term. 
	
	\begin{lemma} \label{prop:Lipschitz_condition}
		Fix $1\leq n\leq N$ and set for brevity $Q = (c^n)^3 - (c_h^n)^3$. 
		Define $\boldsymbol{z}_h =  (\pi_h Q, \{ \pi_h Q\}|_{\Gamma_h})  \in \boldsymbol{S}_h$.
		Assume that $c^n\in H^s(\Omega)$ for $2 \leq s \leq k+1$.  We have
		\begin{equation}
			\norm{\boldsymbol{z}_h}_{1,h}^2 \lesssim h^{2s-2} 
			\norm{c^n}_{H^{s}(\Omega)}^2 + 
			\|\bm{e}^n_{c}\|_{1,h}^2.
		\end{equation}
	\end{lemma}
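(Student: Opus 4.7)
The plan is to split $\|\boldsymbol{z}_h\|_{1,h}^2$ into its two defining contributions, the volumetric gradient piece $\sum_E \|\nabla \pi_h Q\|_{L^2(E)}^2$ and the facet piece $\sum_E h_E^{-1} \|\pi_h Q - \{\pi_h Q\}\|_{L^2(\partial E)}^2$, and to estimate each in terms of the approximation error of $\boldsymbol{\Pi}_h$ and of $\|\boldsymbol{e}_c^n\|_{1,h}$. Since $\{\pi_h Q\}$ is the facet average and coincides with $\pi_h Q$ on boundary facets, the facet contribution reduces, up to an absolute constant, to the DG jump sum $\sum_{e \in \Gamma_h^0} h_e^{-1} \|\jump{\pi_h Q}\|_{L^2(e)}^2$.

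For the volume contribution I use the elementwise $W^{1,2}$-stability of the $L^2$-projection from Lemma~\ref{prop:proj_estimates} to obtain $\|\nabla \pi_h Q\|_{L^2(E)} \lesssim |Q|_{H^1(E)}$, reducing the task to controlling $\|\nabla_h Q\|_{L^2(\Omega)}$. I factor
\begin{equation*}
\nabla_h Q = 3(c^n)^2 \nabla_h(c^n - c_h^n) + 3 (c^n - c_h^n)(c^n + c_h^n)\nabla_h c_h^n.
\end{equation*}
For the first summand I invoke the Sobolev embedding $H^s \hookrightarrow L^\infty$ valid for $s\ge 2$, $d\le 3$ to bound $\|c^n\|_{L^\infty}^2$, decompose $\nabla_h(c^n - c_h^n) = \nabla(c^n - \Pi_h c^n) + \nabla_h e_c^n$, and apply Lemma~\ref{lem:elliptic_proj_est}, yielding $\lesssim h^{s-1}\|c^n\|_{H^s(\Omega)} + \|\boldsymbol{e}_c^n\|_{1,h}$. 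For the second summand I factor out the uniformly bounded $c^n + c_h^n$ using Theorem~\ref{thm:linf_stability}, apply H\"{o}lder's inequality with exponents $(3,6)$ (or $(4,4)$ in $d=2$), use the discrete Gagliardo--Nirenberg inequality (Lemma~\ref{lemma:discrete_GN}) together with the uniform bounds on $\|\boldsymbol{c}_h^n\|_{1,h}$ and $\|\boldsymbol{\Delta}_h \boldsymbol{c}_h^n\|_{0,h}$ from Theorems~\ref{thm:stability} and~\ref{thm:linf_stability} to conclude $\|\nabla_h c_h^n\|_{L^6(\Omega)} \lesssim 1$, and finally bound $\|c^n - c_h^n\|_{L^3(\Omega)}$ by splitting through $\Pi_h c^n$, using Lemma~\ref{lem:broken_poincare2} on $e_c^n \in M_h$ and an inverse estimate on the polynomial difference $\Pi_h c^n - \pi_h c^n$ combined with the $L^2$-approximation estimate of Lemma~\ref{lem:elliptic_proj_est}.

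For the facet term I write $\jump{\pi_h Q} = \jump{\pi_h Q - Q} + \jump{Q}$ on each interior face. The contribution of $\jump{\pi_h Q - Q}$ is controlled, via a scaled trace inequality combined with the $L^2$-approximation of $\pi_h$, by $\|\nabla_h Q\|_{L^2(\Omega)}^2$, which has already been handled. For $\jump{Q}$, I exploit the fact that $(c^n)^3$ is globally continuous, so $\jump{Q} = -\jump{(c_h^n)^3}$; the factorization $(c_h^n|_{E_+})^3 - (c_h^n|_{E_-})^3 = \jump{c_h^n}\bigl((c_h^n|_{E_+})^2 + c_h^n|_{E_+}\,c_h^n|_{E_-} + (c_h^n|_{E_-})^2\bigr)$ together with the uniform $L^\infty$ bound on $c_h^n$ gives $|\jump{Q}| \lesssim |\jump{c_h^n}|$ pointwise. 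Writing $\jump{c_h^n} = \jump{\Pi_h c^n - c^n} - \jump{e_c^n}$ and applying the scaled trace inequality together with Lemma~\ref{lem:elliptic_proj_est} (for the consistency part) and \eqref{eq:1h_norm_bounded_DG} (for the error part) yields the desired bound. The main obstacle is the second piece of $\nabla_h Q$: its closure rests crucially on upgrading the uniform $L^2$-bound on $\nabla_h c_h^n$ provided by Theorem~\ref{thm:stability} to a uniform $L^q$-bound with $q>2$, which is precisely what the discrete Gagliardo--Nirenberg inequality combined with Theorem~\ref{thm:linf_stability} delivers.
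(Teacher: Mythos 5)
Your proposal is correct and follows essentially the same route as the paper's proof: the same volume/facet splitting, the $W^{1,p}$-stability of $\pi_h$ to reduce to $\nabla_h Q$, the discrete Gagliardo--Nirenberg inequality combined with the uniform bounds of Theorems~\ref{thm:stability} and~\ref{thm:linf_stability} to control the gradient of $c_h^n$ in an $L^q$ norm with $q>2$, the broken Poincar\'e inequality and the elliptic-projection estimates for the remaining factors, and the factorization of $\jump{(c_h^n)^3}$ through $\jump{c_h^n}$ for the facet jumps. The only differences are cosmetic (your Hölder exponents $(3,6)$ versus the paper's $(6,3)$, and the algebraic form of the product-rule expansion of $\nabla_h Q$), and both variants close with the same lemmas.
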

	\begin{proof}
		From the definition of the $\norm{\cdot}_{1,h}$-norm \eqref{eq:product_space_norm_h1} 
		and by the $H^1$-stability of the $L^2$-projection \eqref{eq:w1p_stab_proj}, we can write
		\begin{align}
			\norm{\boldsymbol{z}_h}_{1,h}^2 
			&= \norm[0]{\nabla_h \pi_h Q }_{L^2(\Omega)}^2 
			+ \sum_{E \in \mathcal{E}_h} \sum_{e \in \mathcal{F}_E} \frac{1}{h_E} 
			\norm{\pi_h Q \vert_{E} - \{\pi_h Q\} }_{L^2(e)}^2 \nonumber\\
			&\lesssim  \Vert \nabla_h Q\Vert_{L^2(\Omega)}^2  +
			\sum_{E \in \mathcal{E}_h} \sum_{e \in \mathcal{F}_E} \frac{1}{h_E} 
			\norm{\pi_h Q \vert_{E} - \{\pi_h Q\} }_{L^2(e)}^2. 
			\label{eq:T1T2}
		\end{align}		
		To bound the first term in the right-hand side,  observe that
		\begin{equation}
			Q = \del{c_h^n - c^n }\del{ (c^n)^2 + c^n c_h^n + (c_h^n)^2},
		\end{equation}
		so that on each element,
		\begin{equation}
			\nabla	Q 
			=  \del{ (c^n)^2 + c^n c_h^n + (c_h^n)^2} \nabla \del{c_h^n - c^n} 
			+ \del{ 2 c^n \nabla c^n+  (\nabla c^n) c_h^n + c^n (\nabla c_h^n) + 
				2 c_h^n\nabla c_h^n}\del{c_h^n - c^n}.
		\end{equation}
		Thus, with 
		%
		H\"{o}lder's inequality, we have
		%
		%
		%
		\begin{equation}
			\begin{split}
				\norm{\nabla_h Q}_{L^2(\Omega)} &\lesssim \del{ \norm{c^n}_{L^\infty(\Omega)}^2 +  
					\norm{c_h^n}_{L^\infty(\Omega)}^2} \norm[0]{\nabla_h \del{c^n - 
						c_h^n}}_{L^2(\Omega)}  \\ 
				\quad &+ 
				\del{  \norm{ c^n}_{L^{\infty}(\Omega)}  +  \norm{ c_h^n}_{L^{\infty}(\Omega)}} \del{ 
					\norm{\nabla_h 
						c_h^n}_{L^3(\Omega)}  + \norm{\nabla 
						c^n}_{L^3(\Omega)} } \norm{c^n-c_h^n}_{L^6(\Omega)}.
			\end{split}
		\end{equation}
		Since $c^n\in H^2(\Omega)$, we have by Sobolev's embedding:
		\[
		\Vert \nabla c^n\Vert_{L^3(\Omega)} \lesssim \Vert c^n \Vert_{H^2(\Omega)} \lesssim 1.
		\]
		By the discrete Gagliardo--Nirenberg inequality \eqref{eq:disc_gagl_niren2}, 
		\eqref{eq:uniform_energy_bnd}   and \eqref{eq:uniform_apriori_bnds}, we have
		\begin{align}
			\norm{\nabla_h c_h^n}_{L^3(\Omega)} & \lesssim 
			\norm{\boldsymbol{c}_h^n}_{1,h}^{(6+d)/12} 
			\norm{\boldsymbol{\Delta}_h \boldsymbol{c}_h^n}_{0,h}^{(6-d)/12} \lesssim 1.
		\end{align}
		Therefore, the $H^2$ regularity of $c^n$ and 
		\eqref{eq:uniform_apriori_bnds2} yield
		\begin{equation}
			\norm{\nabla_h Q}_{L^2(\Omega)}  \lesssim \norm[0]{\nabla_h \del{c^n - 
					c_h^n}}_{L^2(\Omega)} +   \norm{c^n-c_h^n}_{L^6(\Omega)}.
		\end{equation}
		Using \cref{eq:poincare}, we write
		\begin{equation}
			\norm{\nabla_h Q}_{L^2(\Omega)}  \lesssim \norm[0]{\nabla_h \del{c^n - 
					c_h^n}}_{L^2(\Omega)} 
			+ \Vert \boldsymbol{c}^n-\boldsymbol{c}_h^n\Vert_{1,h}.
		\end{equation}
		With the triangle inequality, we have
		\begin{equation}
			\norm{\nabla_h Q}_{L^2(\Omega)}  \lesssim \Vert \boldsymbol{e}_c^n \Vert_{1,h}
			+ \Vert \boldsymbol{c}^n - (\Pi_h \boldsymbol{c})^n\Vert_{1,h}.
		\end{equation}
		With  the approximation bound \eqref{eq:approximation_elliptic_projection}, we finally obtain 
		\begin{equation}
			\norm{\nabla_h Q}_{L^2(\Omega)}
			\lesssim \Vert \boldsymbol{e}_c^n\Vert_{1,h}
			+ h^{s-1} \Vert c^n \Vert_{H^s(\Omega)}.
		\end{equation}
		
		%
		%
		We note that the terms on boundary faces in the right-hand side of \eqref{eq:T1T2} vanish. It 
		remains to bound
		the terms corresponding to interior faces. 	 
		For $e \in \mathcal{F}_E \cap \Gamma_h^0$, we have by the triangle 
		inequality
		\begin{equation} \label{eq:lipschitz_face_bnd_1}
			\begin{split}
				& \norm{\pi_h Q \vert_{E} -  \av{ \pi_h Q} }_{L^2(e)} \\
				& = \frac{1}{2}\norm{\jump{\pi_h Q}}_{L^2(e)} \\
				&= \frac{1}{2}\norm{(\pi_h Q)|_{E_+} - (\pi_h Q)|_{E_-} }_{L^2(e)} \\
				& \le \frac{1}{2}\norm{Q|_{E_+} - (\pi_h Q)|_{E_+} }_{L^2(e)} + 
				\frac{1}{2}\norm{Q|_{E_+}  - 
					(\pi_h 
					Q)|_{E_-} }_{L^2(e)} \\
				& \le \frac{1}{2}\norm{Q|_{E_+} - (\pi_h Q)|_{E_+} }_{L^2(e)} + 
				\frac{1}{2}\norm{Q|_{E_+}  - 
					Q|_{E_-}  
				}_{L^2(e)} + \frac{1}{2}\norm{Q|_{E_-}  - (\pi_h 
					Q)|_{E_-} }_{L^2(e)}  \\
				& \lesssim h_{E_+}^{1/2} \norm{\nabla Q}_{L^2(E_+)} + h_{E_-}^{1/2} \norm{\nabla 
					Q}_{L^2(E_-)} + \norm{\jump{Q}}_{L^2(e)}.
			\end{split}
		\end{equation}
		In the last inequality, we used \eqref{eq:l2_proj_elem_face_est}. As $\jump{(c^n)^3} = 0$, it 
		suffices to estimate $\jump{(c_h^n)^3}$ to bound the third term on 
		the right-hand side of \eqref{eq:lipschitz_face_bnd_1}. To this end, note that
		\begin{equation}
			\begin{split}
				\jump{(c_h^n)^3} & = (c_h^n)^3|_{E_+} - (c_h^n)^3|_{E_-} \\
				&= ((c_h^n)|_{E_+} - (c_h^n)|_{E_-})   \left( (c_h^n)^2|_{E_+} + 
				(c_h^n)|_{E_+}(c_h^n)|_{E_-} + (c_h^n)^2|_{E_-}\right),
			\end{split}
		\end{equation}
		and therefore,
		\begin{equation}
			\norm[1]{\jump{(c_h^n)^3}}_{L^2(e)} \lesssim \norm{c_h^n}_{L^\infty(\Omega)}^2 
			\norm{\jump{c_h^n}}_{L^2(e)} \lesssim 	\norm{\jump{c_h^n}}_{L^2(e)},
		\end{equation}
		where we have used \eqref{eq:uniform_apriori_bnds2}. 
		Thus, we have
		\[
		\sum_{E \in \mathcal{E}_h} \sum_{e \in \mathcal{F}_E} \frac{1}{h_E} \norm{\pi_h Q \vert_{E} - 
			\{\pi_h Q\} }_{L^2(e)}^2
		\lesssim \Vert \nabla_h Q\Vert_{L^2(\Omega)}^2
		+ \sum_{e\in\Gamma_h^0} \frac{1}{h_e} \Vert [c_h^n]\Vert_{L^2(e)}^2.
		\]
		We have by \eqref{eq:1h_norm_bounded_DG} and the approximation properties 
		\eqref{eq:approximation_elliptic_projection}
		\begin{align*}
			\sum_{e\in\Gamma_h^0} \frac{1}{h_e} \Vert [c_h^n]\Vert_{L^2(e)}^2  &=
			\sum_{e\in\Gamma_h^0} \frac{1}{h_e} \Vert [c_h^n-c^n]\Vert_{L^2(e)}^2\\
			&\lesssim \Vert \boldsymbol{c}_h^n - \boldsymbol{c}^n\Vert_{1,h}^2\\
			&\lesssim \Vert \boldsymbol{e}_c^n \Vert_{1,h}^2 + \Vert \boldsymbol{\Pi}_h c^n - 
			\boldsymbol{c}^n\Vert_{1,h}^2\\
			&\lesssim \Vert \boldsymbol{e}_c^n \Vert_{1,h}^2 + h^{2s-2} \Vert c^n\Vert_{H^s(\Omega)}^2. 
		\end{align*}
		We then conclude by combining the bounds above. \qed
	\end{proof}
	\Bk
	
	\subsection{Proof of Theorem \ref{theorem:error_estimate}} \label{subsec:proof_err_estimate}
	\begin{proof}
		From \Cref{prop:mass_conservation} and  \eqref{eq:broken_elliptic_projection}, we see that 
		$\delta_\tau e_{c}^n \in M_h$. 
		Testing \eqref{eq:error_eq_a} with  $\boldsymbol{\chi}_h^n = 
		\boldsymbol{\mathcal{J}}_h(\delta_\tau 
		e_{c}^n) \in \boldsymbol{M}_h$, \eqref{eq:error_eq_b} with 
		$\boldsymbol{\phi}_h^n = \delta_\tau \boldsymbol{e}_{c}^n \in \boldsymbol{M}_h$ and 
		subtracting, we find 
		\begin{equation} \label{eq:bnding_ech_1}
			\begin{split}
				(\delta_\tau e_{c}^n, \mathcal{J}_h(\delta_\tau 
				e_{c}^n))_{\Omega} &+ 
				a_{\mathcal{D}} 
				(\boldsymbol{e}_{\mu}^n,  \boldsymbol{\mathcal{J}}_h(\delta_\tau 
				e_{c}^n)) +  \kappa a_{\mathcal{D}}
				(\boldsymbol{e}_{c}^n, 
				\delta_\tau \boldsymbol{e}_{c}^n ) \\ &= (\delta_\tau (\Pi_h c^n) - (\partial_t c)^n  , 
				\mathcal{J}_h(\delta_\tau e_{c}^n))_{\Omega} + a_{\mathcal{D}} 
				(\boldsymbol{\pi}_h \mu^n - \boldsymbol{\mu}^n, 	\boldsymbol{\mathcal{J}}_h(\delta_\tau 
				e_{c}^n)) \\
				& \quad - ((c^n)^3- (c_h^n)^3,  \delta_\tau e_{c}^n)_{\Omega} 
				+ (c^n- c_h^{n-1},  \delta_\tau e_{c}^n)_{\Omega} 
				+  (e_{\mu}^n, \delta_\tau e_{c}^n)_{\Omega}. 
			\end{split}
		\end{equation}
		Then,  it follows from \Cref{lem:alternate_green_op_properties} 
		and the symmetry of the bilinear 
		form 
		$a_{\mathcal{D}}$ that
		\begin{equation}
			a_{\mathcal{D}} 
			(\boldsymbol{e}_{\mu}^n,  \boldsymbol{\mathcal{J}}_h(\delta_\tau e_{c}^n)) 
			= (e_{\mu}^n, \delta_\tau e_{c}^n)_{\Omega}.
		\end{equation}
		Moreover, by the definition of the operator $\boldsymbol{\mathcal{J}}_h$, we have
		\begin{equation}
			a_{\mathcal{D}} (\boldsymbol{\mathcal{J}}_h	(\delta_\tau e_{c}^n), 
			\boldsymbol{\mathcal{J}}_h	
			(\delta_\tau e_{c}^n)) = (\delta_\tau e_{c}^n, \mathcal{J}_h(\delta_\tau e_{c}^n))_{\Omega}.
		\end{equation}
		Therefore, with the coercivity of $a_{\mathcal{D}}$ \eqref{eq:aD_coercive},  we obtain
		\begin{equation}  
			\label{eq:termstobound}
			\begin{split}
				&	C_\mathrm{coer} \norm[0]{\boldsymbol{\mathcal{J}}_h(\delta_\tau e_{c}^n)}_{1,h}^2  
				+  \kappa a_{\mathcal{D}}
				(\boldsymbol{e}_{c}^n, 
				\delta_\tau \boldsymbol{e}_{c}^n ) \\ & \leq	( \delta_\tau ( \Pi_h c - c )^n  , 
				\mathcal{J}_h(\delta_\tau 
				e_{c}^n))_{\Omega} 
				+  ( \delta_\tau c^n - (\partial_t c)^n , \mathcal{J}_h(\delta_\tau e_{c}^n))_{\Omega} \\
				& \quad  + a_{\mathcal{D}} 
				(\boldsymbol{\pi}_h \mu^n - \boldsymbol{\mu}^n,  \boldsymbol{\mathcal{J}}_h	(\delta_\tau 
				e_{c}^n))- 
				((c^n)^3- 
				(c_h^n)^3,  \delta_\tau e_{c}^n)_{\Omega} + (c^n- 
				c_h^{n-1},  \delta_\tau e_{c}^n)_{\Omega}  \\ & = T_1 + \cdots + T_5.
			\end{split}
		\end{equation}
		We  proceed to bound each $T_i$, $i=1,\dots,5$. 
		Fix $s$ such that $2\leq s \leq k+1$. By the Cauchy--Schwarz's
		inequality, a Taylor expansion,  the discrete Poincar\'{e} inequality \eqref{eq:poincare} 
		since $\bm{\mathcal J}_h(\delta_\tau e_{c,h}^n) \in \bm{M}_h$,  Young's 
		inequality, and \eqref{eq:approximation_elliptic_projection},  we have for any $\epsilon>0$
		\begin{equation}
			\begin{split}
				|T_1| &\le \norm{\delta_\tau ( \Pi_h c - c )^n  }_{L^2(\Omega)} 
				\norm[0]{\mathcal{J}_h	(\delta_\tau e_{c}^n)}_{L^2(\Omega)} \le C_P \norm{\delta_\tau ( 
					\Pi_h c - c )^n  }_{L^2(\Omega)} 	
				\norm[0]{\boldsymbol{\mathcal{J}}_h	(\delta_\tau e_{c}^n)}_{1,h} \\
				& \le \frac{C_P^2C}{2\epsilon}\tau^{-1} h^{2s-2} \int_{t^{n-1}}^{t^n} \|\partial_t c 
				\|_{H^{s-1}(\Omega)}^2 
				+ \frac{\epsilon}{2} \norm[0]{\boldsymbol{\mathcal{J}}_h	(\delta_\tau e_{c}^n)}_{1,h}^2.
			\end{split}
		\end{equation}
		We bound $T_2$ in a similar fashion:
		\begin{equation}
			\begin{split}
				|T_2| &\le \norm{\delta_\tau c^n - \partial_t c^n }_{L^2(\Omega)} 
				\norm[0]{\mathcal{J}_h	(\delta_\tau e_{c}^n)}_{L^2(\Omega)} \le C_P \norm{\delta_\tau 
					c^n - \partial_t c^n }_{L^2(\Omega)} 
				\norm[0]{\boldsymbol{\mathcal{J}}_h	(\delta_\tau e_{c}^n)}_{1,h} \\
				& \le \frac{C_P^2C }{2\epsilon} \tau\int_{t^{n-1}}^{t^n} \Vert \partial_{tt} 
				c\Vert_{L^2(\Omega)}^2 
				+ \frac{\epsilon}{2} \norm[0]{\boldsymbol{\mathcal{J}}_h	(\delta_\tau e_{c}^n)}_{1,h} ^2.
			\end{split}
		\end{equation}
		To bound $T_3$, we use the definition of the elliptic projection 
		\eqref{eq:broken_elliptic_projection} and the boundedness of the bilinear form $a_{\mathcal{D}}$
		\begin{equation}
			\begin{split}
				T_3 = a_{\mathcal{D}} 
				(\boldsymbol{\pi}_h \mu^n - \boldsymbol{\mu}^n,  \boldsymbol{\mathcal{J}}_h(\delta_\tau 
				e_{c}^n)) &= 	a_{\mathcal{D}} 
				(\boldsymbol{\pi}_h \mu^n - \boldsymbol{\Pi}_h \mu^n, 	  
				\boldsymbol{\mathcal{J}}_h(\delta_\tau 
				e_{c}^n)) \\
				& \le C_\mathrm{cont}\norm{\boldsymbol{\pi}_h \mu^n - \boldsymbol{\Pi}_h \mu^n}_{1,h} 
				\norm[0]{  
					\boldsymbol{\mathcal{J}}_h(\delta_\tau 
					e_{c}^n)}_{1,h}.
			\end{split}
		\end{equation}
		Therefore by the triangle inequality, Young's inequality, \eqref{eq:approximation_property_L^2}, 
		\eqref{eq:approximation_elliptic_projection}, we obtain
		\begin{equation}
			\begin{split}
				|T_3| &\le C_\mathrm{cont}(\norm{\boldsymbol{\pi}_h \mu^n - \bm{\mu}^n}_{1,h} + 
				\norm{\bm{\mu}^n - 
					\boldsymbol{\Pi}_h \mu^n}_{1,h} ) \norm[0]{  
					\boldsymbol{\mathcal{J}}_h(\delta_\tau 
					e_{c}^n)}_{1,h}\\
				& \le  \frac{C_\mathrm{cont}^2 C}{\epsilon}h^{2s-2} \Vert 
				\mu\Vert_{L^{\infty}(0,T;H^{s}(\Omega))}^2 
				+ 
				\frac{\epsilon}{2}  \norm[0]{  
					\boldsymbol{\mathcal{J}}_h(\delta_\tau 
					e_{c}^n)}_{1,h}^2. 
			\end{split} 
		\end{equation}
		To bound $T_4$, we first recall the definitions of $\boldsymbol{z}_h \in \boldsymbol{S}_h$ and 
		$Q$ given in 
		\Cref{prop:Lipschitz_condition} and note that by the definition of the $L^2$-projection $\pi_h$, 
		we 
		have
		\begin{equation}
			|T_4| = |(Q, \delta_\tau e_{c}^n)_{\Omega}| = |(z_h, \delta_\tau e_{c}^n)_{\Omega}|.
		\end{equation}
		By Lemma~\ref{lem:alternate_green_op_properties}, Lemma~\ref{prop:Lipschitz_condition}, the 
		boundedness \eqref{eq:aD_continuous} of the bilinear 
		form 
		$a_{\mathcal{D}}$, and Young's inequality, we have 
		\begin{equation}
			\begin{split}
				|T_4| &=| a_{\mathcal{D}}(\boldsymbol{\mathcal{J}}_h(\delta_\tau 
				e_{c}^n), \boldsymbol{z}_h ) | \\
				&\le C_\mathrm{cont} \norm{\boldsymbol{z}_h}_{1,h} \norm[0]{  
					\boldsymbol{\mathcal{J}}_h(\delta_\tau 
					e_{c}^n)}_{1,h} \\
				&\le \frac{C_\mathrm{cont}^2}{2\epsilon}\left(h^{2s-2} 
				\|c\|^2_{L^{\infty}(0,T;H^{s}(\Omega))} + 
				\|\bm{e}_{c}^n\|_{1,h}^2 \right) + \frac{\epsilon}{2}\norm[0]{  
					\boldsymbol{\mathcal{J}}_h(\delta_\tau 
					e_{c}^n)}_{1,h}^2.
			\end{split}
		\end{equation}
		Finally, to bound $T_5$, observe that by 
		\Cref{lem:alternate_green_op_properties}, Lemma \ref{prop:Lipschitz_condition}, triangle 
		inequality, \eqref{eq:1h_norm_bounded_DG}, \eqref{eq:approximation_elliptic_projection}, and 
		Young's inequality, we have
		\begin{equation}
			\begin{split}
				|T_5| &  \le   C \Vert c^{n}-c_h^{n-1} \Vert_{\mathrm{DG}} \, 
				\Vert\boldsymbol{\mathcal{J}}_h(\delta_\tau e_{c}^n) \Vert_{1,h} \\
				& \le C ( \Vert c^n-c^{n-1}\Vert_{H^1(\Omega)} +
				\Vert c^{n-1}-\Pi_h c^{n-1}\Vert_\mathrm{DG} + \Vert 
				\boldsymbol{e}_c^{n-1}\Vert_\mathrm{DG})
				\, \norm{\boldsymbol{\mathcal{J}}_h(\delta_\tau 
					e_{c}^n)}_{1,h}  \\ 
				& \le \frac{C}{\epsilon} \left(  \tau 
				\int_{t^{n-1}}^{t^n} \|\partial_t c\|_{H^1(\Omega)}^2 
				+ h^{2s-2} \|c\|^2_{L^{\infty}(0,T;H^{s}(\Omega))}  
				+ \| \bm{e}_{c}^{n-1} \|_{1,h}^2 \right)  + 
				\frac{\epsilon}{2} \norm[0]{\boldsymbol{\mathcal{J}}_h(\delta_\tau 
					e_{c}^n)}_{1,h}^2.
			\end{split}
		\end{equation}
		%
		Collecting the bounds on $T_i$, $i=1\dots 5$, choosing $\epsilon = C_\mathrm{coer}/5$, and 
		rearranging, we find there are constants $C$ and $\tilde{C}$ independent of $h$, $\tau$, $c$, 
		$\mu$, and $\kappa$ such
		that 
		%
		\begin{equation}
			\begin{split}
				\frac{C_\mathrm{coer}}{2}&	\norm[0]{\boldsymbol{\mathcal{J}}_h	(\delta_\tau 
					e_{c}^n)}_{1,h}^2 +  
				\frac{\kappa}{ 2\tau } \left(
				a_{\mathcal{D}}
				(\boldsymbol{e}_{c}^n, 
				\boldsymbol{e}_{c}^n )   -	a_{\mathcal{D}}
				(\boldsymbol{e}_{c}^{n-1}, 
				\boldsymbol{e}_{c}^{n-1} ) + C_\mathrm{coer} \|\boldsymbol{e}_{c}^n - 
				\boldsymbol{e}_{c}^{n-1}\|_{1,h}^2  \right) 
				\\ & \le C \left( \tau^{-1} h^{2s-2} \int_{t^{n-1}}^{t^n} \|\partial_t c \|_{H^{s-1}(\Omega)}^2  + 
				\tau\int_{t^{n-1}}^{t^n}( \Vert \partial_{tt} c\Vert_{L^2(\Omega)}^2  + \|\partial_t 
				c\|^2_{H^1(\Omega)} )\right) \\ & 
				\quad + Ch^{2s-2}  \left(\|c\|^2_{L^{\infty}(0,T;H^{s}(\Omega))}  + 
				\|\mu\|^2_{L^{\infty}(0,T;H^{s}(\Omega))}  \right) + \tilde{C}\Bk \|\bm{e}_{c}^n\|^2_{1,h}
				+ \tilde{C}  \Vert \boldsymbol{e}_c^{n-1}\Vert_{1,h}^2. \Bk 
			\end{split}
		\end{equation}
		Multiplying by $2 \tau$, summing from $n=1$ to $n = m$, noting that 
		$\boldsymbol{e}_c^0 = \mathbf{0}$ by construction, 
		and applying a discrete Gr\"{o}nwall inequality yield for any $1 \leq m \leq N$, 
		\begin{equation}
			\|\bm{e}_{c}^m\|_{1,h}^2 
			+ \tau \sum_{n=1}^m \norm[0]{\boldsymbol{\mathcal{J}}_h	(\delta_\tau e_{c}^n)}_{1,h}^2  
			\lesssim \tau^2 + h^{2s-2},  \label{eq:bound_J_h}
		\end{equation}	
		under the assumption that $\tau$ is small enough, namely $\tau\leq (\kappa 
		C_\mathrm{coer})/(4\tilde{C} )$.
		Hence, the triangle inequality and Lemma~\ref{lem:elliptic_proj_est} yield the bound on $\| 
		\bm{c}^n - \bm{c}_h^n\|_{1,h}$. To obtain a 
		bound on $\bm{e}_{\mu}^n$, test \eqref{eq:error_eq_a} with $\bm{e}_{\mu}^n$. With the 
		coercivity property of $a_\mathcal{D}$, we obtain 
		\begin{align*}
			C_\mathrm{coer} \|\bm{e}_{\mu}^n\|^2_{1,h} \leq - (\delta_\tau e_{c}^n , 
			e_{\mu}^n)_{\Omega} +  
			(\delta_\tau (\Pi_h c^n) - (\partial_t c)^n , e_{\mu}^n)_{\Omega} + a_{\mathcal{D}} 
			(\boldsymbol{\pi}_h \mu^n - \boldsymbol{\mu}^n, \bm{e}_{\mu}^n)   = W_1 +W_2 +W_3.  
		\end{align*}
		The term $W_1$ is bounded by \eqref{eq:alternate_green_op_bnd}, 
		\eqref{eq:1h_norm_bounded_DG}, and Young's inequality:
		\begin{equation}
			|W_1| \leq C \| \boldsymbol{\mathcal{J}}_h(\delta_\tau 
			e_{c}^n)\|_{1,h} \|\bm{e}_{\mu}^n\|_{1,h} \leq \frac{C_\mathrm{coer}}{4}  
			\|\bm{e}_{\mu}^n\|_{1,h}^2 + C 
			\| \boldsymbol{\mathcal{J}}_h(\delta_\tau 
			e_{c}^n)\|_{1,h}^2. 
		\end{equation}
		Choosing $\bm{\chi}_h = (1,1) $ in \eqref{eq:error_eq_a}, we observe that 
		\begin{equation}
			(\delta_\tau (\Pi_h c^n) - (\partial_t c)^n , 1 )_{\Omega} = (\delta_{\tau} 
			e_{c}^n,1)_{\Omega} = 0, 
		\end{equation}
		since $\delta_\tau e_{c}^n \in M_h$. 
		Thus, we bound $W_2$ with the Cauchy-Schwarz's and Poincar\'{e}'s inequalities, the approximation 
		properties of $\Pi_h$ and
		the usual Taylor expansions (similar to the bounds of $T_1$ and $T_2$) as follows: 
		\begin{align}
			|W_2| & = | (\delta_\tau (\Pi_h c^n) - (\partial_t c)^n , e_{\mu}^n - 
			\overline{e_{\mu}^n})_{\Omega} | \leq C_P \|\delta_\tau (\Pi_h c^n) - (\partial_t 
			c)^n\|_{L^2(\Omega)}
			\, \|\bm{e}_{\mu}^n\|_{1,h} \nonumber \\ 
			& \leq \frac{C_\mathrm{coer}}{4}  \|\bm{e}_{\mu}^n\|_{1,h}^2  +  C\left(\tau^{-1} h^{2s-2} 
			\int_{t^{n-1}}^{t^n} 
			\|\partial_t c \|_{H^{s-1}(\Omega)}^2  +  \tau\int_{t^{n-1}}^{t^n} \|\partial_{tt} 
			c\|_{L^2(\Omega)}^2 \right).
		\end{align}
		Similar to $T_3$, we derive that 
		\begin{align*}
			|W_3| = |a_{\mathcal{D}} 
			(\boldsymbol{\pi}_h \mu^n - \boldsymbol{\Pi}_h \mu^n, \bm{e}_{\mu}^n) | \leq 
			\frac{C_\mathrm{coer}}{4}  
			\|\bm{e}_{\mu}^n\|_{1,h}^2  +  Ch^{2s-2} \|\mu\|^2_{L^{\infty}(0,T;H^{s}(\Omega))}. 
		\end{align*}
		Combining the above bounds, multiplying by $\tau$, summing the resulting inequality from $n=1$ 
		to $n=m$, and using \eqref{eq:bound_J_h} and  the triangle inequality yields the error bound for 
		the chemical potential $\mu$. 
		\qed
	\end{proof}
	
	\section{Numerical experiment}
%
%
%
%
	
	To illustrate the rates of convergence experimentally, we consider a scenario taken from 
	\cite{liu2019numerical} involving the merging or separation of two droplets of one fluid surrounded 
	by another.  We take as the initial condition for the order parameter $c_0$:
	\begin{equation*}
		c_0(x,y) = \begin{cases}
			1,& \quad \text{if} \quad (x,y) \in \sbr{\frac{1}{8},\frac{1}{2}}^2 \cup 
			\sbr{\frac{1}{2},\frac{7}{8}}^2 \\
			-1,& \quad \text{otherwise}
		\end{cases}
	\end{equation*}
	on the two dimensional domain $\Omega = (0,1)^2$. The region occupied by the droplets is 
	indicated by $c=1$, while the region occupied by the surrounding fluid is indicated by $c=-1$. This 
	example has been implemented using Netgen/NGSolve \cite{Schoberl-1997-netgen, 
		Schoberl-2014-ngsolve}.
	We apply the HDG scheme 
	\eqref{eq:CH_disc} with $k=1$ for a sequence of mesh and time step sizes $h_j = 1/2^j$ and $\tau_j 
	= 0.1/2^{2j}$ for $j =3,4,5,6$ and interface parameters $\kappa = 1/2^8, 1/2^{10}, 
	1/2^{12}$ until the end time $T=0.1$. In the absence of an analytical solution, we instead compute 
	the 
	$L^2$ error $\norm[0]{c_{h_{j}} - c_{h_6}}_{L^2(\Omega)}$ at the final time $T=0.1$ for $j=3,4,5$. 
	The order of 
	convergence is estimated via
	\begin{equation}
		\text{rate} = \log_2 \del{ \frac{\norm[0]{c_{h_{j-1}} - c_{h_6}}_{L^2(\Omega)}}{\norm[0]{c_{h_{j}} 
					- 
					c_{h_6}}_{L^2(\Omega)}} },
	\end{equation}
	for $j=3,4,5$. Here, for notational brevity, we have suppressed the time index. The approximate 
	solutions are plotted in \Cref{fig:droplet} and the approximate errors and rates of convergence are 
	listed in \Cref{table:droplet}. We observe merging of the two droplets for $\kappa = 1/2^8,  1/2^{10}$ 
	and separation of the two droplets for $\kappa = 1/2^{12}$. In each case, the order of 
	convergence in the $L^2$-norm appears to approach two. As we have taken $\tau_j = O(h_j^2)$, 
	we observe first order convergence in time and second order convergence in space as predicted by 
	Theorem~\ref{theorem:error_estimate}. 
	
	\begin{table}[h]
		\centering
		\begin{tabular}{ccccccccc} \toprule
						 & &  & 
			\multicolumn{2}{c}{$\kappa = 1/2^8$} & \multicolumn{2}{c}{$\kappa = 1/2^{10}$}  & 
			\multicolumn{2}{c}{$\kappa = 1/2^{12}$ }  \\ \cmidrule(l){4-5} \cmidrule(l){6-7} \cmidrule(l){8-9}
			{$j$} & {$h_j$} & {$\tau_j$} & 
			{Error} & {Rate} & {Error} & {Rate} & 
			{Error} & {Rate}  \\ \midrule
			3  &  { $1/2^3$ } & { $0.1/2^{6}$ }& 5.652{$\times 
				10^{-2}$}  & {---} & 1.422{$\times 
				10^{-1}$}& {---}   & 1.441{$\times 
				10^{-1}$}&{---} \\
			4  &  { $1/2^4$ } & { $0.1/2^{8}$ }& 1.206{$\times 
				10^{-2}$}  & 2.229 & 3.078{$\times 
				10^{-2}$} & 2.208  & 7.822{$\times 
				10^{-2}$}& 0.881  \\
			5  &  { $1/2^5$ }  & { $0.1/2^{10}$ }& 2.403{$\times 
				10^{-3}$}  & 2.327 &6.340{$\times 
				10^{-3}$} & 2.279   & 1.939{$\times 
				10^{-2}$}  & 2.012   \\
			\bottomrule
		\end{tabular}
		\caption{$L^2$ error $\norm[0]{c_{h_{j}} - c_{h_6}}_{L^2(\Omega)}$ between the discrete 
			approximations for the order parameter at the {$j^{\text{th}}$} refinement ($j=3,4, 5$) and the 
			solution on the fine grid $(j=6)$ for various fixed interface parameters $\kappa$ computed at 
			the final time $T= 
			0.1$ and the corresponding estimated rates of convergence.}
		\label{table:droplet}
	\end{table}

	 		\begin{figure}
	 				\centering
	 		\begin{minipage}{1.8in}	
	 			\centering
	 			\hspace{15mm}
	 			$\kappa = 1/2^8$ 
	 		\end{minipage}
	 		\begin{minipage}{1.8in}	
	 			\centering
	 			\hspace{15mm}
	 		$\kappa = 1/2^{10}$ 
	 		\end{minipage}
	 	\begin{minipage}{1.8in}	
	 		\centering
	 		\hspace{15mm}
	 		$\kappa = 1/2^{12}$ 
	 	\end{minipage}
	
	 		\centering
	 				$j=3$
	 		\begin{minipage}{1.8in}
	 			\centering
	 			\includegraphics[width=\textwidth,trim={5cm 5cm 10cm 
	 			5cm},clip]{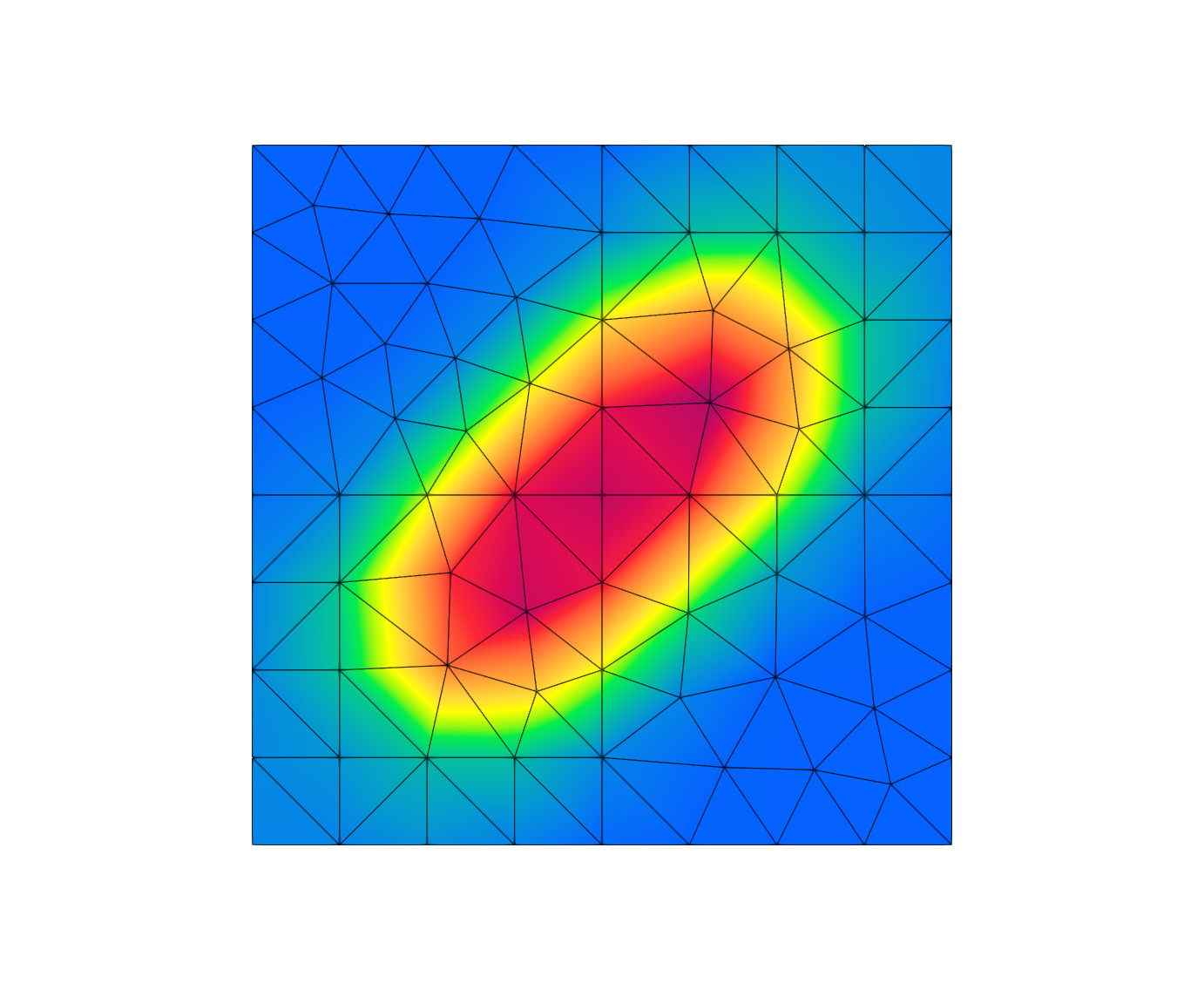}
	 		\end{minipage}
	 		\begin{minipage}{1.8in}
	 			\centering
	 			\includegraphics[width=\textwidth,trim={5cm 5cm 10cm 
	 				5cm},clip]{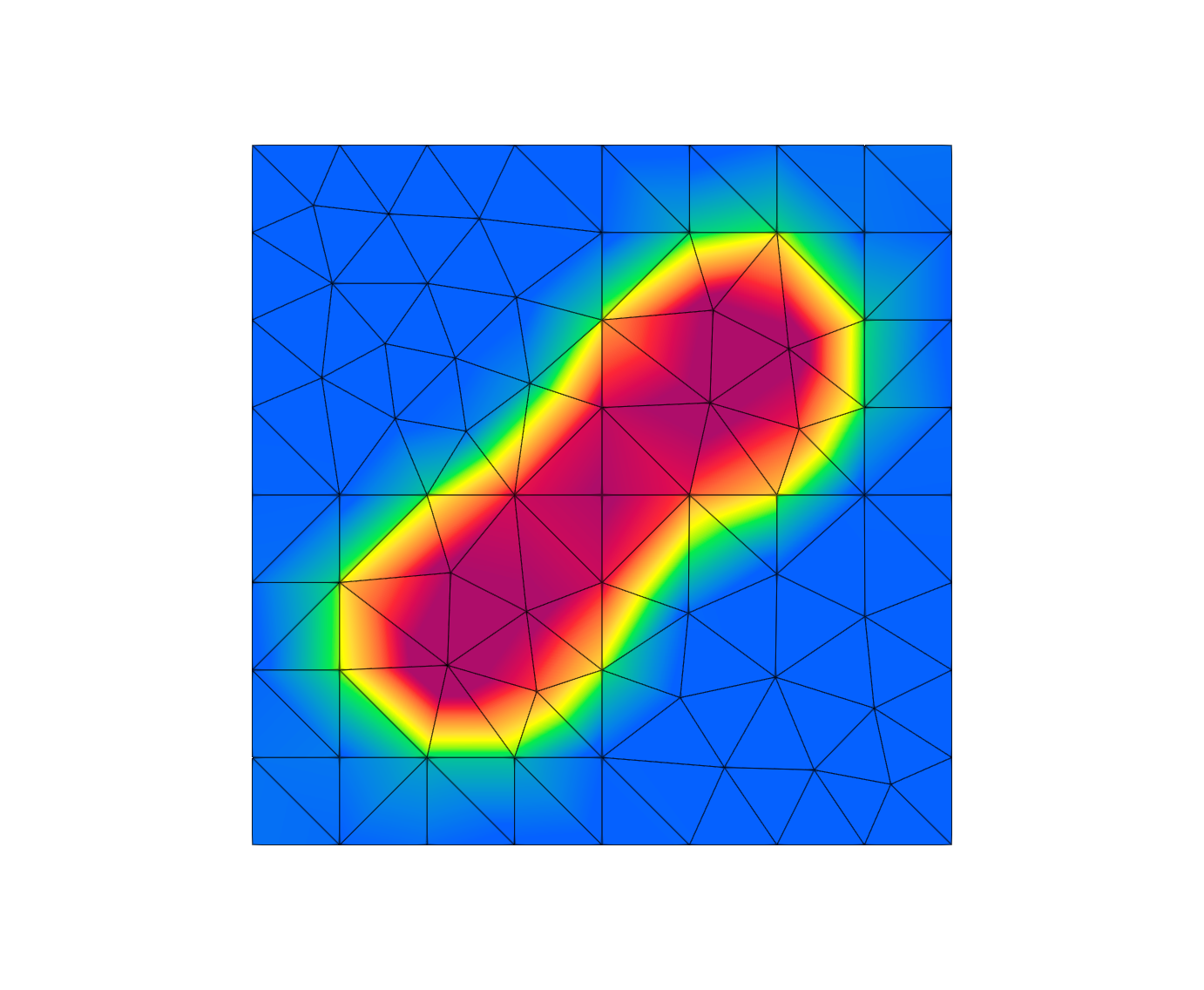}
	 		\end{minipage}
	 		\begin{minipage}{1.8in}
	 			\centering
	 			\includegraphics[width=\textwidth,trim={5cm 5cm 10cm 
	 				5cm},clip]{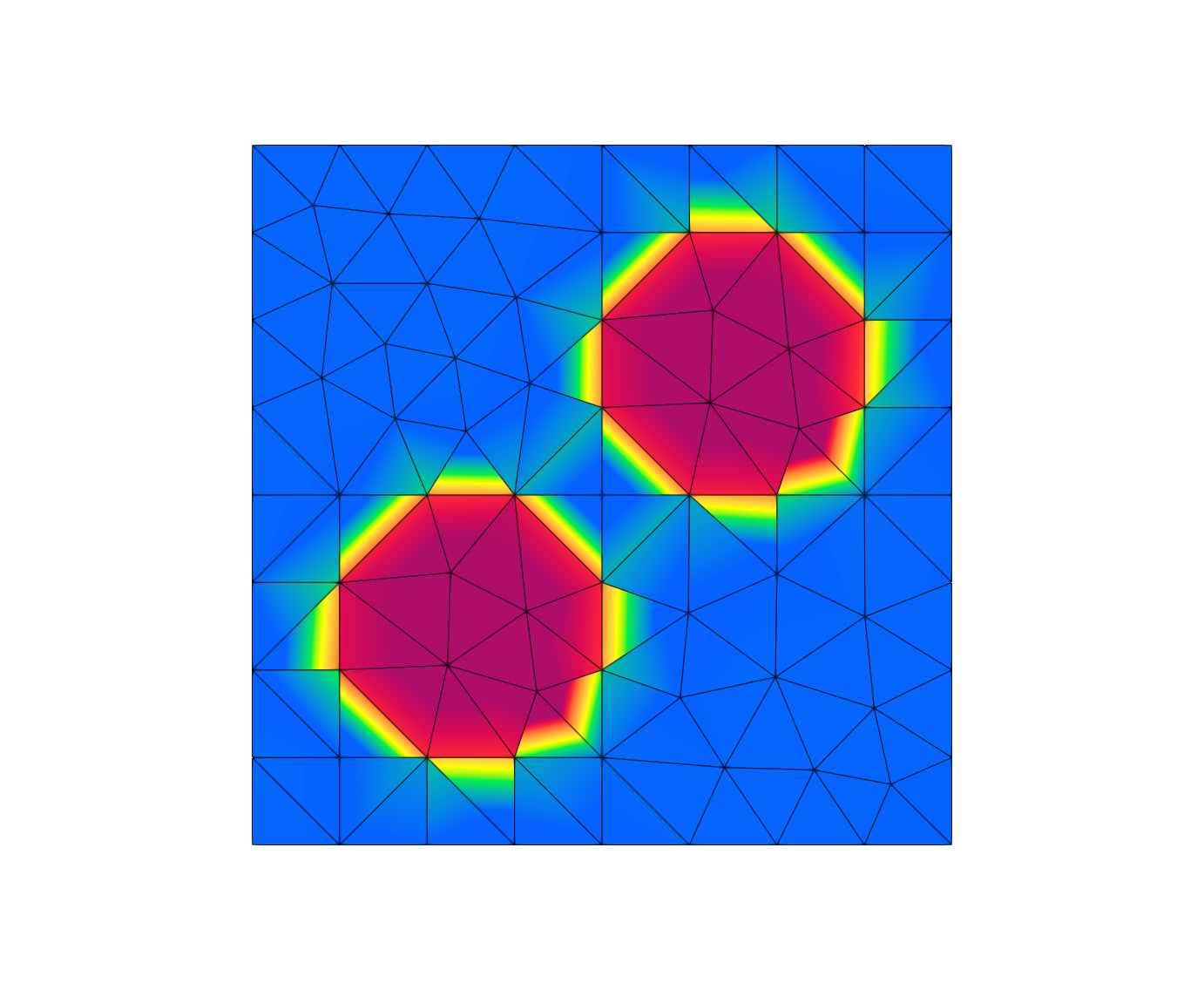}
	 		\end{minipage}
	 	\\
	 			\centering
	 							$j=4$
	 	\begin{minipage}{1.8in}
	 		\centering
	 		\includegraphics[width=\textwidth,trim={5cm 5cm 10cm 
	 			5cm},clip]{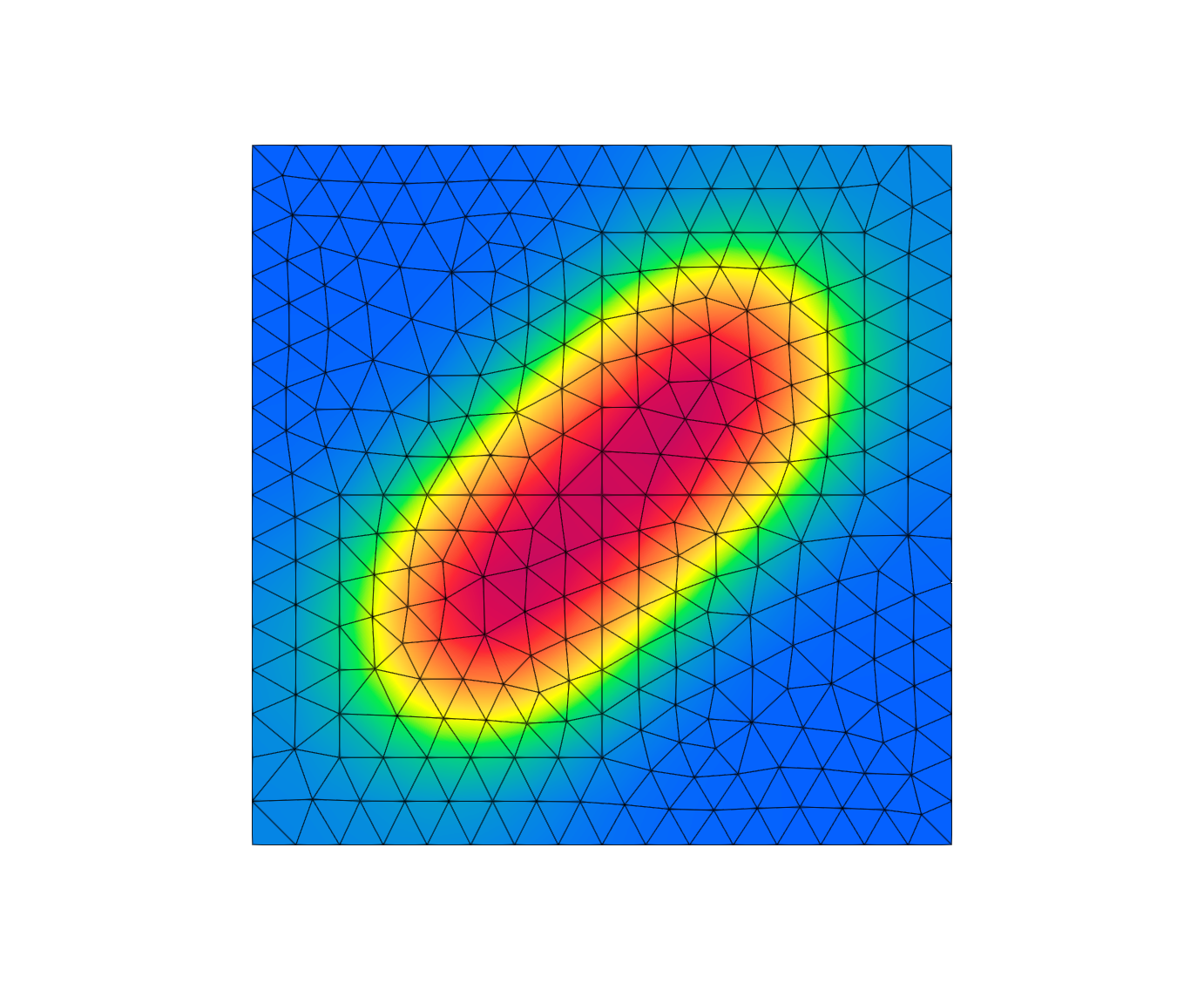}
	 	\end{minipage}
	 	\begin{minipage}{1.8in}
	 		\centering
	 		\includegraphics[width=\textwidth,trim={5cm 5cm 10cm 
	 			5cm},clip]{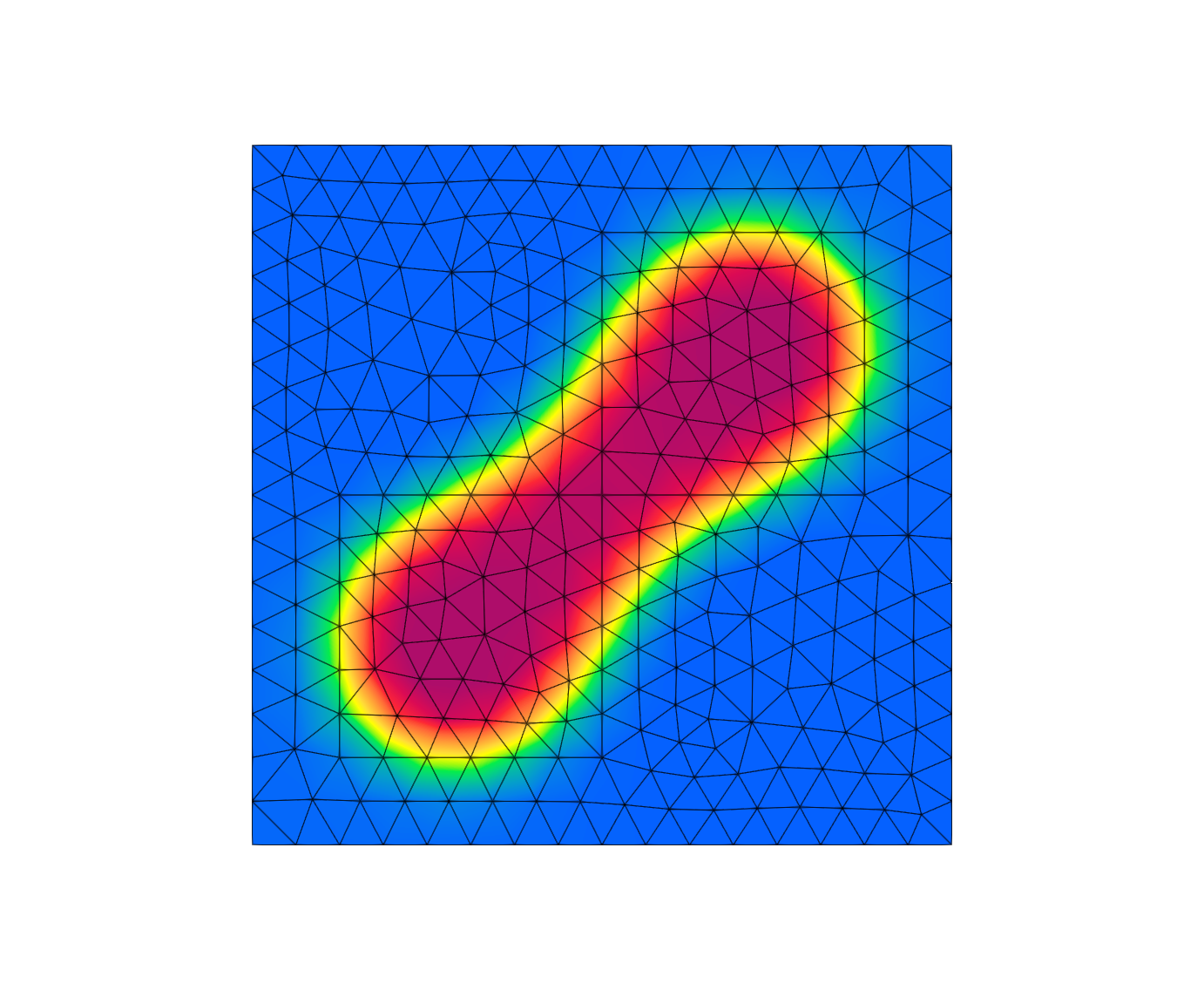}
	 	\end{minipage}
	 	\begin{minipage}{1.8in}
	 		\centering
	 		\includegraphics[width=\textwidth,trim={5cm 5cm 10cm 
	 			5cm},clip]{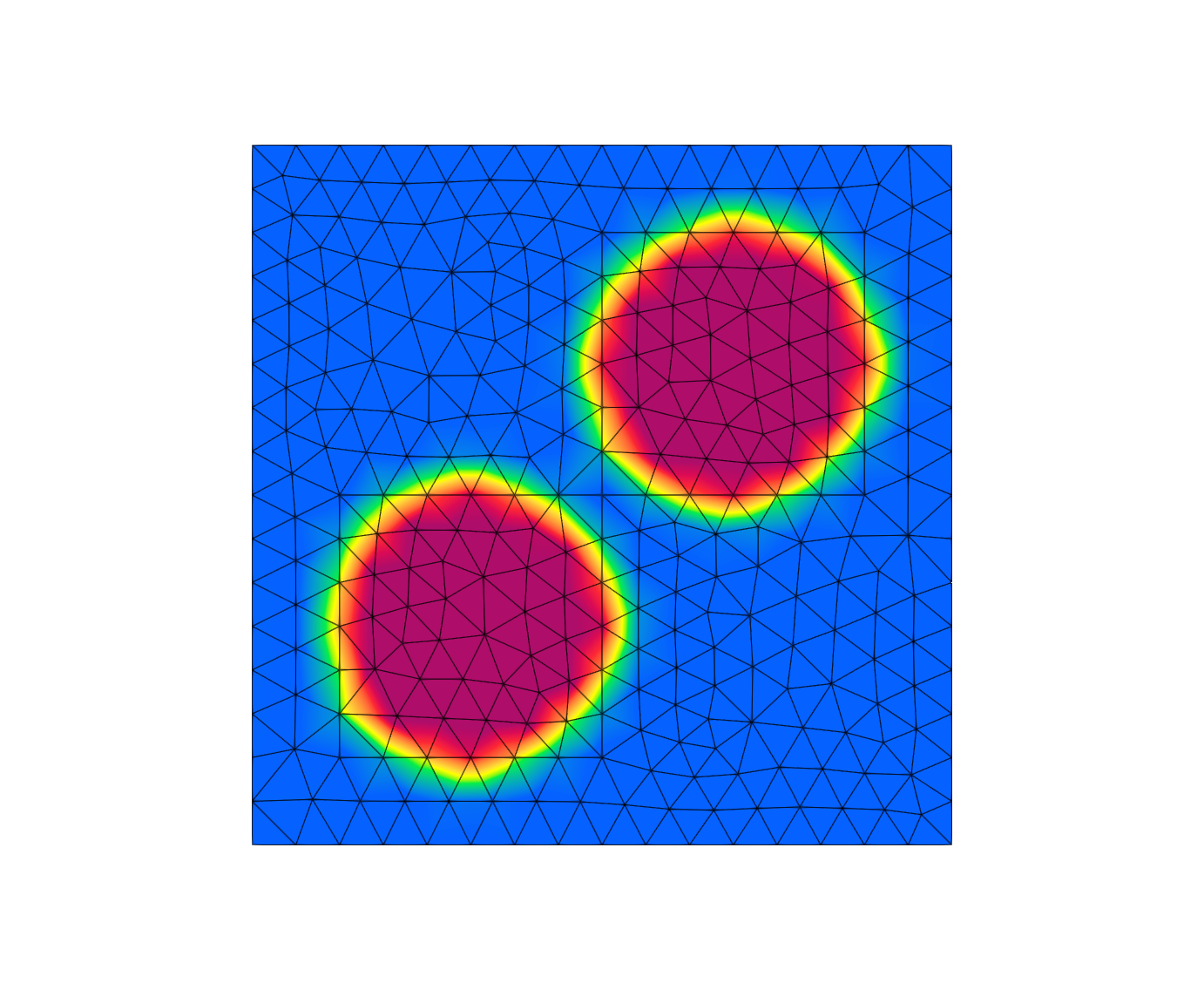}
	 	\end{minipage}
	 \\
	 		\centering
	 						$j=5$
	 \begin{minipage}{1.8in}
	 	\centering
	 	\includegraphics[width=\textwidth,trim={5cm 5cm 10cm 
	 		5cm},clip]{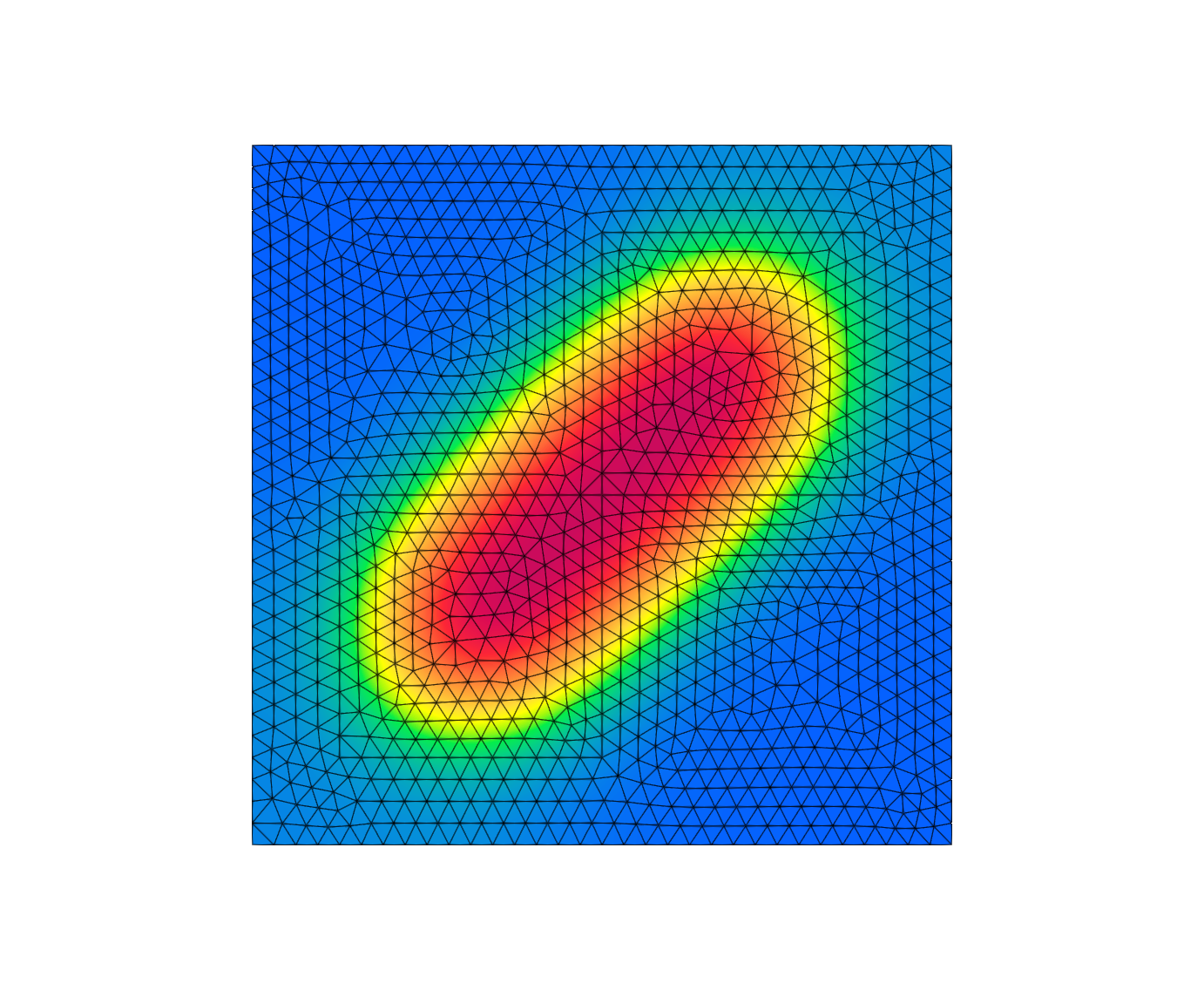}
	 \end{minipage}
	 \begin{minipage}{1.8in}
	 	\centering
	 	\includegraphics[width=\textwidth,trim={5cm 5cm 10cm 
	 		5cm},clip]{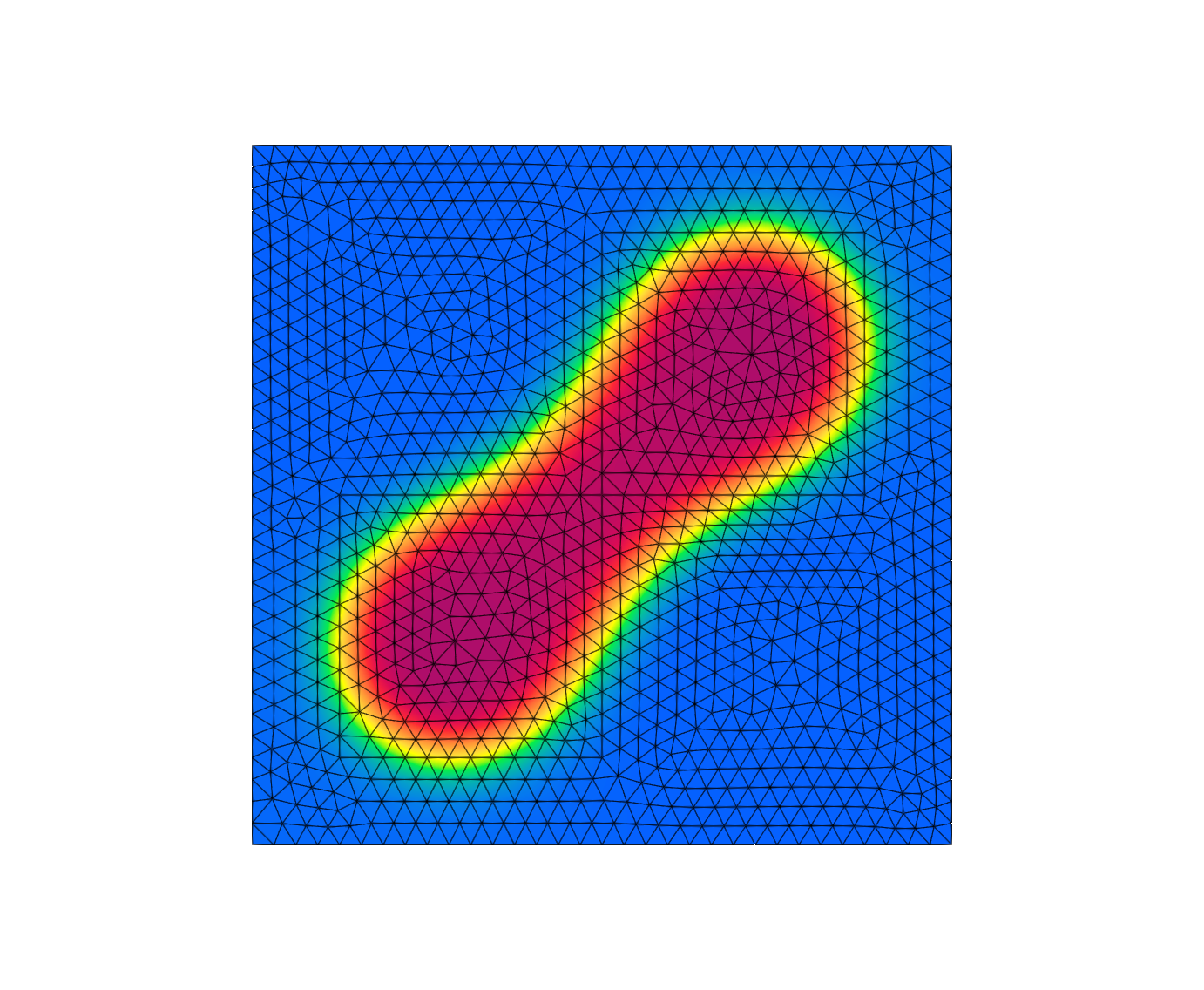}
	 \end{minipage}
	 \begin{minipage}{1.8in}
	 	\centering
	 	\includegraphics[width=\textwidth,trim={5cm 5cm 10cm 
	 		5cm},clip]{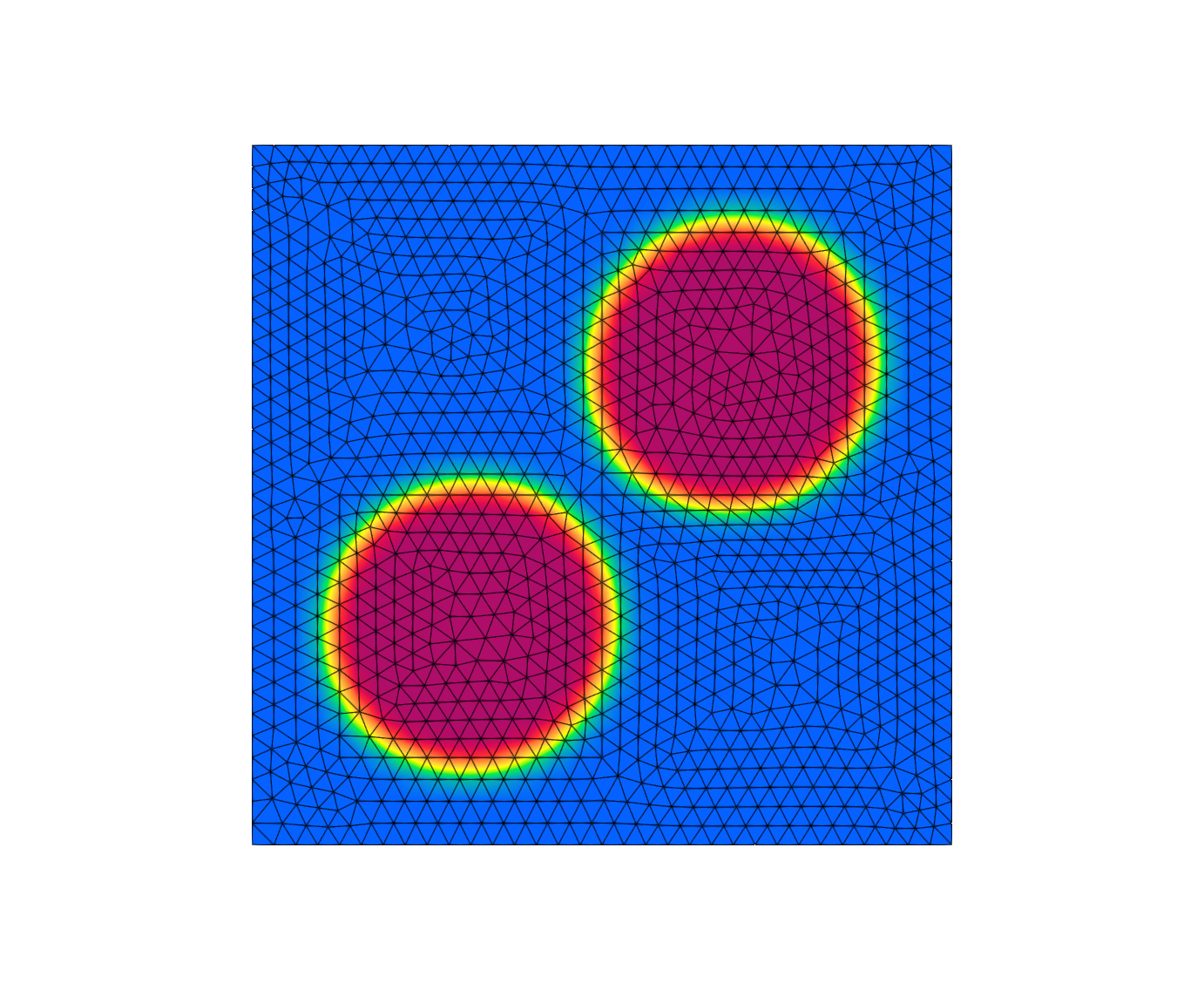}
	 \end{minipage}
	 \\
	 \centering
	 				$j=6$
	 \begin{minipage}{1.8in}
	 	\centering
	 	\includegraphics[width=\textwidth,trim={5cm 5cm 10cm 
	 		5cm},clip]{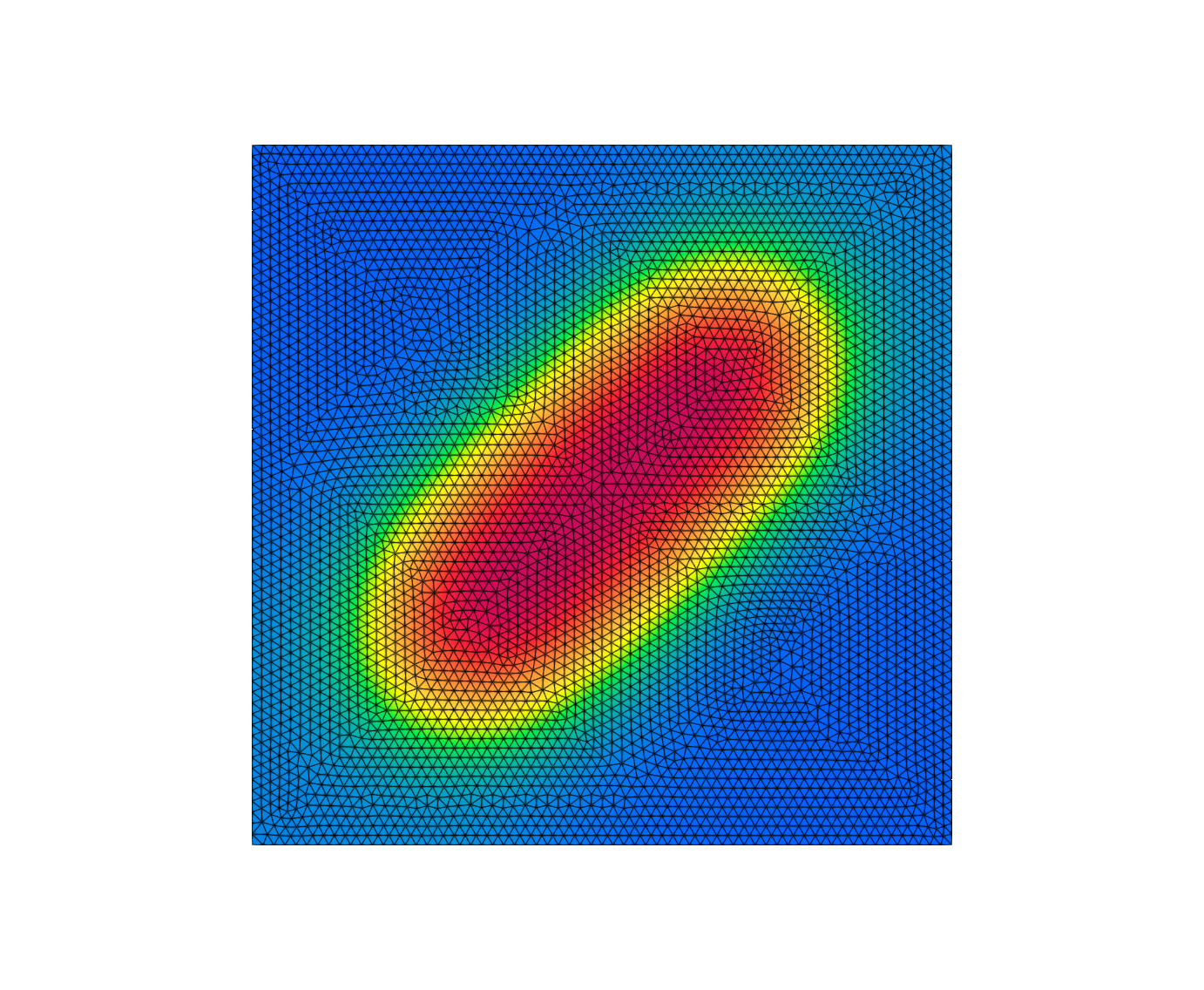}
	 \end{minipage}
	 \begin{minipage}{1.8in}
	 	\centering
	 	\includegraphics[width=\textwidth,trim={5cm 5cm 10cm 
	 		5cm},clip]{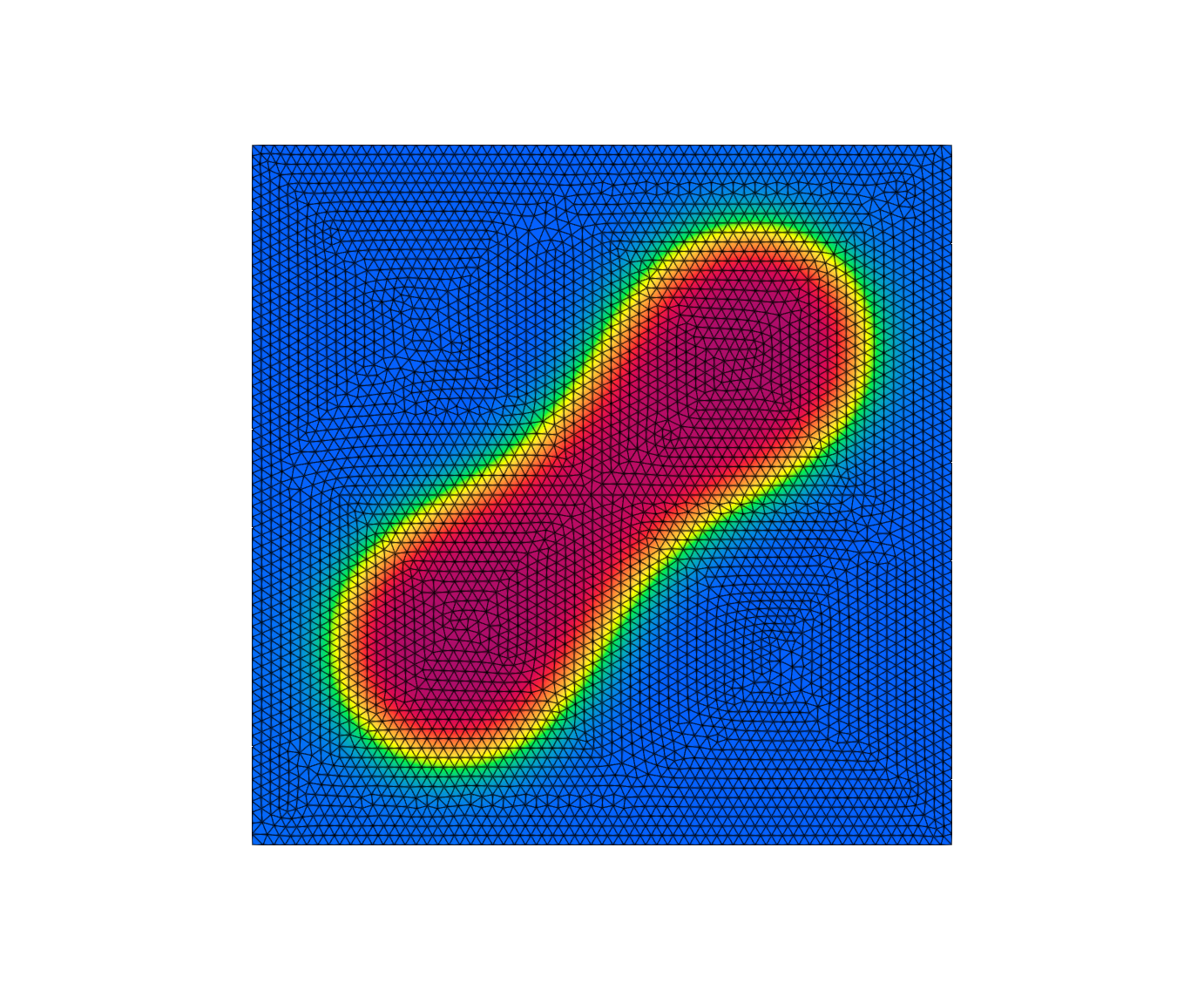}
	 \end{minipage}
	 \begin{minipage}{1.8in}
	 	\centering
	 	\includegraphics[width=\textwidth,trim={5cm 5cm 10cm 
	 		5cm},clip]{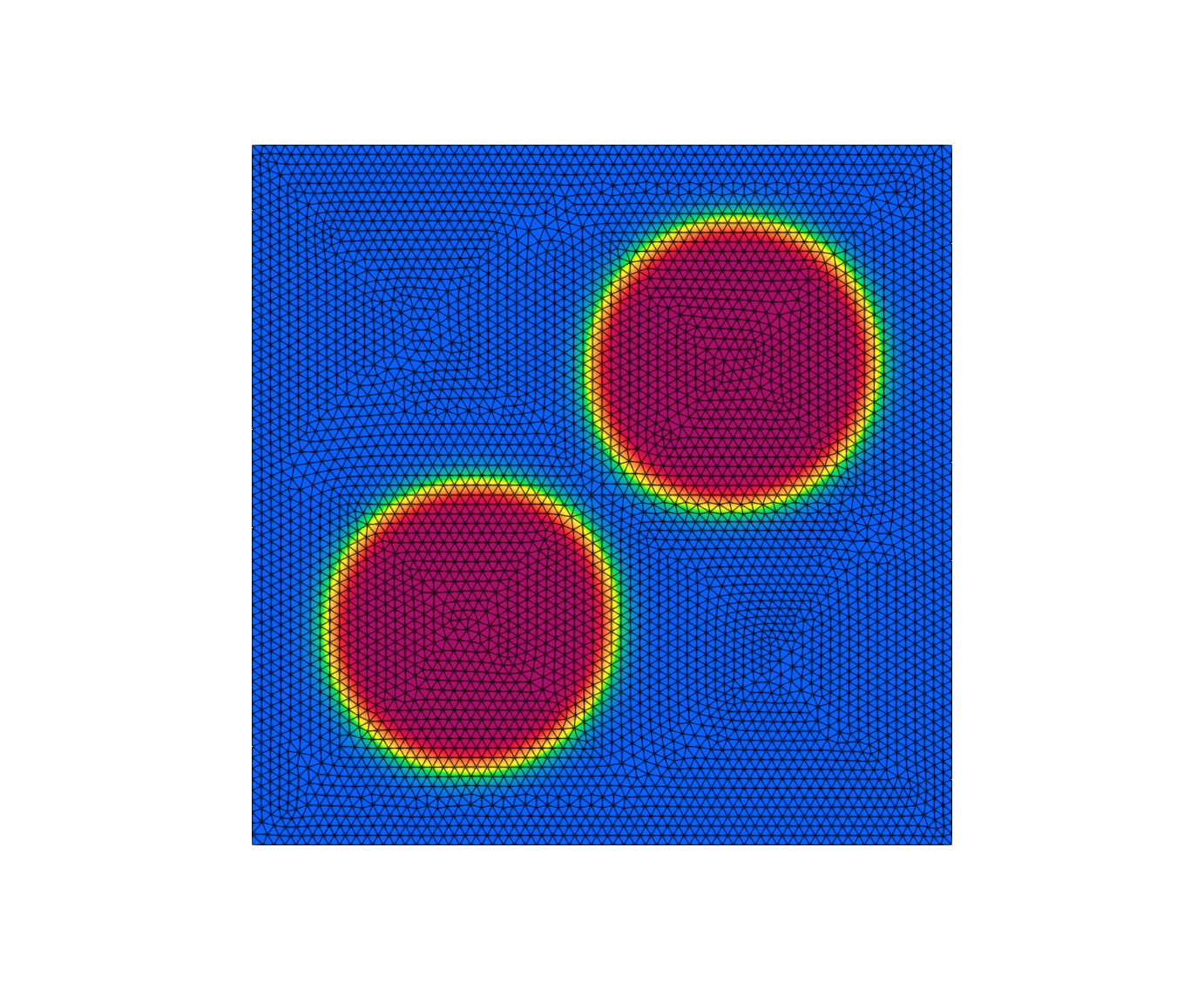}
	 \end{minipage}
	 \caption{ Merging or separating droplets for various fixed time steps and mesh sizes $(\tau_j,h_j)$ 
	 and fixed interface parameters $\kappa$ at the final time $T= 
	 0.1$. See \Cref{table:droplet} for estimated rates of 
	 convergence.}
	 \label{fig:droplet}
	 	\end{figure}

	\section{Conclusion}
	In this paper, we have analyzed a hybridized IPDG method for solving the mixed Cahn--Hilliard system 
	combined with a convex-concave splitting of the chemical energy density and a first order implicit 
	Euler method. We proved the unconditional unique solvability of the nonlinear algebraic system arising 
	from our discretization using techniques from the theory of monotone operators.  We showed the 
	unconditional stability of the scheme for any $\mathcal{C}^2$ potential function, and established the $L^\infty$ 
	stability of the order parameter for the Ginzburg--Landau potential on convex domains. Next, we 
	derived optimal a priori error estimates in space and time for the Ginzburg--Landau potential in a 
	mesh-dependent $H^1$-like norm. Finally, we observed the expected rate of convergence from our 
	theoretical results through a numerical experiment.

	\appendix
	\section{Proof details for \Cref{lem:green_estimates}} \label{appendix:proof_G_prop}
	
	\begin{proof}
		We begin by proving \eqref{eq:green_op_error_1}.  
		To simplify notation,  define $\boldsymbol{\xi} =  \boldsymbol{G}w_h - \boldsymbol{\pi}_h G(w_h)$, 
		$\boldsymbol{\eta}_h = \boldsymbol{\pi}_h G(w_h) - \boldsymbol{G}_h \boldsymbol{w}_h$ 
		so that  $\boldsymbol{G} w_h - \boldsymbol{G}_h \boldsymbol{w}_h = \boldsymbol{\xi} + 
		\boldsymbol{\eta}_h$.  We have from \eqref{eq:galerkin_orth_green} that
		\begin{equation}
			a_{\mathcal{D}}( \boldsymbol{\eta}_h, \boldsymbol{v}_h) 
			= - j_{0}(\bm{w}_h, \bm{v}_h) - 
			a_{\mathcal{D}}( \boldsymbol{\xi},  \boldsymbol{v}_h), \quad \forall \bm{v}_h \in \bm{M}_h. 
		\end{equation}
		Choosing $\boldsymbol{v}_h = \boldsymbol{\eta}_h \in \bm{M}_h$ and using the coercivity 
		\eqref{eq:aD_coercive} and continuity \eqref{eq:aD_continuous} of 
		the bilinear form 
		$a_{\mathcal{D}}$  and the bound on $j_{0}$ \eqref{eq:bound_j0_1h}, we have
		\begin{equation}
			\norm{\boldsymbol{\eta}_h}_{1,h}^2 \lesssim \del{h\norm{\boldsymbol{w}_h}_{0,h}+ 
				\norm{\boldsymbol{\xi}_h}_{1,h,\star} }\norm{\boldsymbol{\eta}_h}_{1,h}.
		\end{equation}
		Using \eqref{eq:approximation_property_L^2}, Lemma~\ref{prop:proj_estimates} and elliptic regularity,  we have
		\begin{equation}
			\norm{\boldsymbol{\xi}_h}_{1,h,\star} \lesssim h \norm{G 
				w_h}_{H^2(\Omega)} \lesssim h \norm{w_h}_{L^2(\Omega)}, \label{eq:B5}
		\end{equation}
		and therefore,
		\begin{equation}
			\norm{\boldsymbol{\eta}_h}_{1,h} \lesssim h\norm{\boldsymbol{w}_h}_{0,h}.\label{eq:B6}
		\end{equation}
		This shows \eqref{eq:green_op_error_1}. Next, we show \eqref{eq:green_op_error_2} using a 
		standard duality argument. Consider the 
		following boundary value problem: find $z \in H^2(\Omega) \cap L_0^2(\Omega)$ such that
		\begin{equation} \label{eq:dual_BVP}
			\begin{split}
				-\Delta z =   \pi_h G(w_h) - G_h \boldsymbol{w}_h,  \quad \text{ in } \Omega,  \quad 
				\nabla z \cdot n = 0,  \quad \text{ on } \partial \Omega.
			\end{split}
		\end{equation}
		Since $\Omega$ is convex, $z \in H^2(\Omega)$, we have the following identity for $\boldsymbol{z} = (z,z|_{\Gamma_h})$.
		\begin{equation}
			a_{\mathcal{D}}(\boldsymbol{z}, \boldsymbol{G} w_h - \boldsymbol{G}_h 
			\boldsymbol{w}_h) = \int_{\Omega} (-\Delta z) \del{ G(w_h) - G_h \boldsymbol{w}_h} \dif x.
		\end{equation}
		As $z$ solves \eqref{eq:dual_BVP}, $(\pi_h G(w_h) - G_h \bm{w}_h, G(w_h))_{\Omega} = (\pi_h G(w_h) 
		- G_h \bm{w}_h,\pi_h Gw_h)_{\Omega}$, and the bilinear form $a_{\mathcal{D}}$ is symmetric, 
		we have
		that 	%
		\begin{equation}
			\norm{G_h \boldsymbol{w}_h - \pi_h G w_h}_{L^2(\Omega)}^2 = 	
			a_{\mathcal{D}}(\boldsymbol{G} w_h - \boldsymbol{G}_h 
			\boldsymbol{w}_h, \boldsymbol{z} ).
		\end{equation}
		Let $\boldsymbol{\pi}_h z = (\pi_h z, \hat{\pi}_h z) \in \boldsymbol{M}_h$. By 
		\eqref{eq:galerkin_orth_green}, 
		\begin{equation} \label{eq:green_l2_bnd_1}
			\norm{G_h \boldsymbol{w}_h - \pi_h G w_h}_{L^2(\Omega)}^2 = 	
			a_{\mathcal{D}}(\boldsymbol{G} w_h - \boldsymbol{G}_h 
			\boldsymbol{w}_h, \boldsymbol{z} - \boldsymbol{\pi}_h z)  - j_0(\bm{w}_h, \bm{\pi}_h z). 
		\end{equation}
		Observe that the first term on the right-hand side of \eqref{eq:green_l2_bnd_1} is bounded by 
		\eqref{eq:aD_extended_continuous2},  \eqref{eq:B5}, \eqref{eq:B6}, \eqref{eq:equivnorms}, \eqref{eq:approximation_property_L^2},  \eqref{eq:l2_proj_elem_face_est} and elliptic regularity:
		\begin{multline}
			|a_{\mathcal{D}}(\boldsymbol{G} w_h - \boldsymbol{G}_h 
			\boldsymbol{w}_h, \boldsymbol{z} - \boldsymbol{\pi}_h z)| \lesssim \norm{ \boldsymbol{G} 
				w_h - 
				\boldsymbol{G}_h 
				\boldsymbol{w}_h}_{1,h} \norm{\boldsymbol{z} - \boldsymbol{\pi}_h z}_{1,h,\star} 
			\lesssim h \norm{\boldsymbol{w}_h}_{0,h} \norm{\boldsymbol{z} - \boldsymbol{\pi}_h 
				z}_{1,h,\star}  \\
			\lesssim h^2 \norm{\boldsymbol{w}_h}_{0,h} \norm{z}_{H^2(\Omega)} 
			\lesssim h^2 \norm{\boldsymbol{w}_h}_{0,h} 	\norm{G_h \boldsymbol{w}_h - \pi_h G(w_h)}_{L^2(\Omega)}.
		\end{multline}
		
		To bound the second term on the right-hand side of \eqref{eq:green_l2_bnd_1}, we use 
		\eqref{eq:bound_j0_1h},  \eqref{eq:approximation_property_L^2} and
		elliptic regularity:
		\begin{multline}
			j_0(\bm{w}_h, \bm{\pi}_h \bm{z})  = j_0(\bm{w}_h, \bm{\pi}_h \bm{z} - \bm{z})
			\lesssim h \norm{\boldsymbol{w}_h}_{0,h} \norm{\boldsymbol{z} - \boldsymbol{\pi}_h 
				z}_{1,h} \\
			\lesssim h^2 \norm{\boldsymbol{w}_h}_{0,h} \norm{z}_{H^2(\Omega)} 
			\lesssim h^2 \norm{\boldsymbol{w}_h}_{0,h} 	\norm{G_h \boldsymbol{w}_h - \pi_h G(w_h)}_{L^2(\Omega)}.
		\end{multline}
		%
		Therefore,
		\begin{equation}
			\norm{G_h \boldsymbol{w}_h - \pi_h G(w_h)}_{L^2(\Omega)} \lesssim h^2 
			\norm{\boldsymbol{w}_h}_{0,h}.
		\end{equation}
		\qed
	\end{proof}
	
	\section{Proof of \eqref{eq:alternate_green_op_bnd}}\label{sec:proof_alternate_green_op_bnd} 
	\begin{proof}
		To prove \eqref{eq:alternate_green_op_bnd}, fix $w_h\in M_h$ and $v\in H^1(\mathcal{E}_h)$. 
		Define define $\boldsymbol{z}_h =  (\pi_h v, \{\pi_h v\}|_{\Gamma_h}) \in \boldsymbol{S}_h$.
		%
		By the definition of the orthogonal 
		$L^2$-projection $\pi_h : H^1(\mathcal{E}_h) \to S_h$, the definition of the operator 
		$\boldsymbol{\mathcal{J}}_h$, and the boundedness of the bilinear form $a_{\mathcal{D}}$ we 
		have
		\begin{equation}
			\begin{split}
				|(w_h, v)_{\Omega}| &= |(w_h, \pi_h v)_{\Omega}| \\
				& = |a_{\mathcal{D}}(\boldsymbol{\mathcal{J}}_{h}(w_h), \boldsymbol{z}_h)| 
				\\ & \lesssim \norm[0]{ \boldsymbol{\mathcal{J}}_{h}(w_h)}_{1,h} 
				\norm{\boldsymbol{z}_h}_{1,h}.
			\end{split}
		\end{equation}
		Observe that
		\begin{equation}
			\begin{split}
				\norm{\boldsymbol{z}_h}_{1,h}^2 
				&= \norm{\nabla_h \pi_h v}_{L^2(\Omega)}^2 + \sum_{E \in \mathcal{E}_h} \sum_{e \in 
					\mathcal{F}_E} \frac{1}{h_E} \norm{ \pi_h v - \{\pi_h v\} }_{L^2(e)}^2 \\
				&= \norm{\nabla_h \pi_h v}_{L^2(\Omega)}^2 + \sum_{E \in \mathcal{E}_h} \sum_{e \in 
					\mathcal{F}_E \cap \Gamma_h^0}  \frac{1}{4h_E}  \norm{ \jump{\pi_h 
						v}}_{L^2(e)}^2 \\
				& = \norm{\nabla_h \pi_h v}_{L^2(\Omega)}^2 +  \sum_{e \in \Gamma_h^0} \sum_{E \in 
					\mathcal{E}_e } \frac{1}{4h_E} \norm{ \jump{\pi_h v}_e}_{L^2(e)}^2,
			\end{split}
		\end{equation}
		where $\mathcal{E}_e$ is the set of the two neighboring elements of $e$, denoted by $E_{e,1}$ 
		and $E_{e,2}$.
		Therefore,
		\begin{equation}
			\begin{split}
				\norm{\boldsymbol{z}_h}_{1,h}^2 
				& = \norm{\nabla_h \pi_h v}_{L^2(\Omega)}^2 +  \sum_{e \in \Gamma_h^0} 
				\del[3]{ \frac{1}{4h_{E_{e,1}}} + \frac{1}{4h_{E_{e,2}}} }\norm{ \jump{\pi_h v} }_{L^2(e)}^2 \\
				& \lesssim \norm{\nabla_h \pi_h v}_{L^2(\Omega)}^2 +  \sum_{e \in \Gamma_h^0} 
				\frac{1}{h_e} 
				\norm{ 
					\jump{\pi_h v} }_{L^2(e)}^2.
			\end{split}
		\end{equation}
		Thus, by the stability of the $L^2$-projection in the DG norm we find  
		\begin{equation} \label{eq:stab_l2_semi}
			\norm{\boldsymbol{z}_h}_{1,h}^2 \lesssim \norm{\pi_h v}_{\text{DG}}^2 \lesssim 
			\norm{v}_{\text{DG}}^2.
		\end{equation}
		The result follows.
		\qed
	\end{proof}

	\bibliographystyle{elsarticle-num-names}
	\bibliography{references_arxiv}
\end{document}